\tikzset{%
  inode/.style={minimum size=18pt,circle,fill=white,inner sep=1pt,draw,font=\footnotesize},%
  onode/.style={minimum size=18pt,circle,white,fill=black!70,inner sep=1pt,draw,font=\footnotesize},%
  node distance = 2.7cm,%
  auto,%
  nedge/.style={thick},
  iedge/.style={thick},
  oedge/.style={iedge},
  cedge/.style={thick,densely dashed},
}
\tikzset{%
  spectrum/.style={thick},%
  spectrumpos/.style={spectrum,densely dashed},%
  spectrumevpos/.style={spectrum,densely dotted},%
}%
\pgfplotsset{compat=1.18}%
\pgfplotsset{width=0.9\textwidth,height=0.5\textwidth,enlargelimits=false}%
\pgfplotsset{grid style={color=gray},xmajorticks=false,grid=major,extra x tick style={xmajorticks=true}}
\definecolor{tblue}{HTML}{1F77B4}
\definecolor{torange}{HTML}{FF7F0E}
\definecolor{tgreen}{HTML}{2CA02C}
\definecolor{tred}{HTML}{D62728}
\definecolor{tpurple}{HTML}{9467BD}
\definecolor{tbrown}{HTML}{8C564B}
\definecolor{tpink}{HTML}{E377C2}
\definecolor{tgray}{HTML}{7F7F7F}
\definecolor{tolive}{HTML}{BCBD22}
\definecolor{tcyan}{HTML}{17BECF}
\title{Positivity Properties of the Dirichlet-to-Neumann Operator on Graphs}
\author[1]{Daniel Daners}%
\author[2]{Jochen Gl\"uck}%
\author[3]{James B. Kennedy}%
\affil[1]{School of Mathematics and Statistics, The University of Sydney, NSW 2006, Australia\authorcr%
  \nolinkurl{daniel.daners@sydney.edu.au}}%
\affil[2]{Fakultät für Mathematik und Naturwissenschaften, Bergische Universität Wuppertal, Gaußstraße 20, D-42119 Wuppertal, Germany\authorcr%
  \nolinkurl{glueck@uni-wuppertal.de}}%
\affil[3]{Departamento de Matemática, Faculdade de Ciências da Universidade de Lisboa, Campo Grande, Edifício C6, P-1749-016 Lisboa, Portugal\authorcr%
  \nolinkurl{jbkennedy@ciencias.ulisboa.pt}}%
\date{\today}
\newtheorem{theorem}{Theorem}[section]
\newtheorem{lemma}[theorem]{Lemma}
\newtheorem{proposition}[theorem]{Proposition}
\newtheorem{corollary}[theorem]{Corollary}
\theoremstyle{definition}
\newtheorem{definition}[theorem]{Definition}
\newtheorem{example}[theorem]{Example}
\theoremstyle{remark}
\newtheorem{remark}[theorem]{Remark}
\newtheorem{remarks}[theorem]{Remarks}
\numberwithin{equation}{section}
\numberwithin{figure}{section}
\DeclareMathOperator{\im}{im}
\DeclareMathOperator{\spb}{s}
\DeclareMathOperator{\sign}{sign}
\DeclareMathOperator{\trace}{\gamma}
\DeclareMathOperator{\repart}{Re}
\DeclareMathOperator{\impart}{Im}
\DeclareMathOperator{\innerVertex}{i}
\DeclareMathOperator{\outerVertex}{o}
\DeclareMathOperator{\leftVertex}{l}
\DeclareMathOperator{\rightVertex}{r}
\newcommand{\Vi}{{V_{\innerVertex}}}
\newcommand{\Vo}{{V_{\outerVertex}}}
\newcommand{\el}{{e_{\leftVertex}}}
\newcommand{\er}{{e_{\rightVertex}}}
\newcommand{\bbN}{\mathbb{N}}
\newcommand{\bbR}{\mathbb{R}}
\newcommand{\bbQ}{\mathbb{Q}}
\newcommand{\bbC}{\mathbb{C}}
\let\oldthebibliography\thebibliography
\renewcommand\thebibliography[1]{
  \oldthebibliography{#1}
  \setlength{\parskip}{0pt}
  \setlength{\itemsep}{0pt plus 0.3ex}
  \small
}
\begin{document}
\maketitle
\renewcommand{\thefootnote}{}%
\footnotetext{\textbf{Mathematics Subject Classification (2020):} 34B45; 47A10; 47D03}%
\footnotetext{\footnotesize\textbf{Keywords:} Quantum graph; Dirichlet-to-Neumann operator; positive semigroup; eventually positive semigroup }%
\begin{abstract}
  We explore positivity properties of the semigroup generated by the negative of the Dirichlet-to-Neumann operator with real potential $\lambda$, defined on a subset of the vertices of a quantum graph. We show that for rationally independent edge lengths and suitable graph topologies, this semigroup will alternate between being positive, eventually positive without being positive (that is, positive only for sufficiently large times), and not even eventually positive, as $\lambda \to \infty$. For other graph topologies, the semigroup will alternate between being positive and not eventually positive. The topological conditions are related to a \emph{reduced graph} which is a schematic map of the connections between the vertices on which the Dirichlet-to-Neumann operator acts.
\end{abstract}

\section{Introduction and main results}
\label{sec:introduction}
The aim of this work is to investigate positivity properties of the semigroup generated by the Dirichlet-to-Neumann operator on a \emph{quantum graph}. Throughout the paper let $G = (V,E)$ be a \emph{simple graph} (no loops or parallel edges) with a non-empty finite set of vertices $V$ and set of edges $E$. We let $L\colon E \to (0,\infty)$ be a function that assigns a length $L_e$ to each edge $e \in E$, making the graph into a quantum graph, sometimes also called a metric graph. This allows us to consider boundary value problems on the edges of $G$ with boundary conditions on the vertices.

We also introduce a non-empty subset $\Vo \subseteq V$ and its complementary set $\Vi := V \setminus \Vo$. For reasons that will become clear later we refer to $\Vo$ as the set of \emph{outer vertices} and to $\Vi$ as the set of \emph{inner vertices}. The aim of this paper is to study positivity properties of solutions to the following problem with dynamic node conditions on the outer vertices $\Vo$ and \emph{Kirchhoff conditions} on the inner vertices $\Vi$:
\begin{equation}
  \label{eq:dynamic-boundary-value-problem}
  \begin{aligned}
    \Delta_{\max} f(t)+\lambda f(t)            & =0 &  & \text{on }G\times[0,\infty),   \\
    (\nu f(t))                                 & =0 &  & \text{on }\Vi\times[0,\infty), \\
    \frac{\partial}{\partial t}f(t)+(\nu f(t)) & =0 &  & \text{on }\Vo\times[0,\infty), \\
    (\gamma f(0))                              & =x &  & \text{on }\Vo,
  \end{aligned}
\end{equation}
with $0 \leq x \in \bbR^{\Vo}$, where $\lambda\in\mathbb R$, $\Delta_{\max}$ is the maximal Laplace operator on $L^2(G)$, $\trace f$ is the \emph{trace} of $f$ on $V$ and $\nu f$ the \emph{outer unit normal derivative} on $V$, see Section~\ref{sec:dtn-operator} for precise definitions.

By introducing the \emph{Dirichlet-to-Neumann operator} on $\bbC^{\Vo}$ we can reformulate \eqref{eq:dynamic-boundary-value-problem} to fit standard semigroup theory. To construct this operator we start with the Laplace operator $\Delta_{\Vo}$ on $G$ with homogeneous Dirichlet boundary conditions on $\Vo$ and Kirchhoff conditions on $\Vi$; this is known to be a self-adjoint operator on $L^2(G)$ having compact resolvent and spectrum $\sigma(-\Delta_{\Vo})\subseteq (0,\infty)$ (see also Proposition~\ref{prop:simple-properties-of-laplacian} below). In particular, the minimal eigenvalue
\begin{equation}
  \label{eq:lambda-1}
  \lambda_1(-\Delta_{\Vo}):=\min \sigma(-\Delta_{\Vo})>0
\end{equation}
is well defined. It turns out that the problem
\begin{equation}
  \label{eq:boundary-value-problem}
  \begin{aligned}
    \Delta_{\max}f+\lambda f & =0   &  & \text{on }G \\
    (\gamma f)|_\Vo          & = x                   \\
    (\nu f)|_\Vi             & = 0.
  \end{aligned}
\end{equation}
has a unique solution for each $x \in \bbC^\Vo$ if and only if $\lambda\in\mathbb R\setminus\sigma(-\Delta_{\Vo})$; see Proposition~\ref{prop:boundary-value-problem-and-eigenvalue-problem} below. The Dirichlet-to-Neumann operator is the operator
\begin{equation*}
  D_{\lambda,\Vo}\colon \bbC^\Vo \to \bbC^\Vo,\qquad
  x \mapsto (\nu f)|_{\Vo},
\end{equation*}
where $f$ is the unique solution of~\eqref{eq:boundary-value-problem} (cf.\ \cite[Section~3.5]{berkolaiko:13:iqg} or \cite[Chapter~17]{kurasov:24:sgg}). We endow $\bbC^{\Vo}$ with the Euclidean norm and its real part $\bbR^{\Vo}$ with the usual component-wise order. If $\Vo$ has $m$ elements enumerated in a fixed order $\{v_1,\dots,v_m\}$, and the edge between the vertices $v_k$ and $v_j$ is identified with $(k,j)$, then $\bbC^{\Vo}$ can be identified with $\bbC^m$ and $D_{\lambda,\Vo}$ can be represented by an $m\times m$ matrix.

This operator and its associated matrix, along with numerous variants, have been studied heavily in the quantum graph literature in recent years; indeed, it is also sometimes called the \emph{Titchmarsh-Weyl $M$-function} or simply \emph{$M$-function}. It also belongs to the larger class of \emph{Herglotz-Nevanlinna operators}, see for instance \cite[Chapter~18]{kurasov:24:sgg}. The operator is studied for its links to inverse scattering problems (see for instance \cite{kostrykin:06:lmg}, \cite[Notes on Chapter 3]{berkolaiko:13:iqg}, or \cite[Chapter 18]{kurasov:24:sgg}), and related questions of analyticity of eigenvalues and eigenfunctions with respect to various graph parameters \cite{kuchment:19:asd}. It has also found applications to estimates for Laplacian eigenvalues via ``surgery'' methods \cite{kurasov:20:ggs}, as well as to the relationship between nodal domains of eigenfunctions and spectral partitions of the graph (or domain) \cite{berkolaiko:19:nds}. Other applications and results can be found for instance in \cite{behrndt:10:nne,carlson:12:dnm,friedlander:17:dno}.

At any rate, this operator allows us to rewrite Problem~\eqref{eq:dynamic-boundary-value-problem} in the equivalent form
\begin{equation}
  \label{eq:dynamic-boundary-value-problem-abstract}
  \begin{aligned}
    \frac{df}{dt}+D_{\lambda,\Vo}f & =0 &  & \text{in }\Vo\times(0,\infty) \\
    f(0)                           & =x &  & \text{on }\Vo,
  \end{aligned}
\end{equation}
its solution can be written in terms of the matrix semigroup $(e^{-tD_{\lambda,\Vo}})_{t\geq 0}$ as $f(t)=e^{-tD_{\lambda,\Vo}}x$, which is also studied in \cite[Section~6.6.1]{mugnolo:14:sme}. The problem \eqref{eq:dynamic-boundary-value-problem-abstract} is analogous to the Dirichlet-to-Neumann semigroup on domains $\Omega\subseteq\mathbb R^N$. The ``outer vertices'' correspond to the boundary $\partial\Omega$ and the ``interior vertices'' correspond to the interior of $\Omega$, explaining our terminology. The outer vertices are also sometimes called contact vertices (see for instance \cite[Chapter~17]{kurasov:24:sgg}), in keeping with the idea of the Dirichlet-to-Neumann operator as a voltage-to-current operator.

In this paper we are interested in various types of positivity properties of the semigroup $(e^{-tD_{\lambda,\Vo}})_{t\geq 0}$ in function of both $\lambda \in \mathbb R$ and the topology of the graph, including certain notions of \emph{eventual positivity}, as considered in \cite{daners:16:eap,daners:16:eps}. In fact, it was already observed earlier, by one of the current authors \cite{daners:14:nps}, following on from \cite{arendt:12:fei}, that on simple domains such as disks, the Dirichlet-to-Neumann semigroup $e^{-tD_\lambda}$ displays rather intricate positivity properties in function of $\lambda$: for some $\lambda > 0$ sufficiently large, even above the first eigenvalue of the Dirichlet Laplacian, it may be positive (that is, $f \geq 0$ implies $e^{-tD_{\lambda}}f \geq 0$); for other large $\lambda$ it may not be positive but rather eventually positive (that is, roughly speaking, $f > 0$ implies $e^{-tD_{\lambda}}f > 0$ for large but not necessarily for small $t>0$); or it may not have any such properties. This, in turn, was one of the examples which inspired the construction of a more general theory of eventual positivity starting in \cite{daners:16:eap,daners:16:eps}. This subject has since seen a rapid development by various authors, for instance in spectral analysis \cite{arora:24:saa, mui:23:lep}, long-term behaviour \cite{arnold:23:gre, vogt:22:sep}, perturbation theory \cite{arora:25:sos, pappu:25:epp}, and in applications to various differential equations on graphs \cite{gregorio::20:blg} and domains \cite{addona:22:bkt, kunze:25:eon, ploss:24:efo}. However, it remains an open question whether the same kind of eventual positivity of the Dirichlet-to-Neumann semigroup observed on the disk also holds on general bounded domains in $\mathbb R^n$.

The current paper is, in some sense, a sequel to works such as \cite{daners:14:nps,daners:16:eps}, where we will extend these considerations to quantum graphs. On the one hand, on a graph the Dirichlet-to-Neumann semigroup reduces to a matrix, making it far more tractable. In fact, we note that there is a large literature studying what patterns of positive entries in a square matrix lead to an eventually positive matrix, see for instance \cite{berman:10:spe,johnson:22:tsp, noutsos:08:rhn} and, for the same phenomenon with general cones in $\mathbb R^n$, \cite{glueck:23:eci, kasigwa:17:eci, sootla:19:pep}. However, as we will see, semigroups arising from Dirichlet-to-Neumann matrices on quantum graphs are a special class: they still display the intricate behaviour observed on the disk, and moreover, this depends crucially on the topology of the graph. To explain this, and to be able to formulate our main results, we first need to introduce a (discrete) graph associated with $G$, with vertices $\Vo$.
\begin{definition}[Reduced graph]
  \label{def:reduced-graph}
  Let $G$ be a graph as defined above and let $\Vo$ and $\Vi$ be the sets of outer and inner vertices, respectively. We let $G[\Vo]$ and $G[\Vi]$ be the corresponding induced sub-graphs of $G$. We define the graph $G_{\outerVertex}$ to be the graph with vertex set $\Vo$ and a single edge between the vertices $v,w\in\Vo$ if there is an edge in $G$ connecting them, or if there is a path through $\Vi$ connecting them. Denote the set of edges by $E_{\outerVertex}$. We will call $G_{\outerVertex}=(\Vo,E_{\outerVertex})$ the \emph{reduced graph} of $G$ associated with $\Vo$.
\end{definition}
We note that the reduced graph of a connected graph is a simple connected graph and that $G_{\outerVertex}=G$ if $\Vo=V$. As an example consider the graph in Figure~\ref{fig:graph-with-subgraphs}. Its reduced graph shown in Figure~\ref{fig:graph-with-subgraphs-reduced}. Dashed lines indicate the cases where vertices are connected through $\Vi$. Where two vertices in $\Vo$ are connected directly \emph{and} through $\Vi$, we have marked both lines to make this clear; however, this is always to be understood as a single edge.
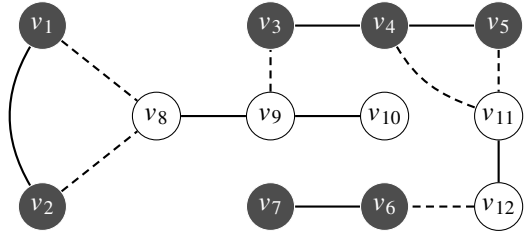
\begin{figure}[ht]
  \centering
  \begin{tikzpicture}[declare function={dv=1.2;dh=1.5;}]
    \node[onode] (v1) at (0,dv) {$v_1$};%
    \node[onode] (v2) at (0,-dv) {$v_2$};%
    \node[onode] (v3) at (2*dh,dv) {$v_3$};%
    \node[onode] (v4) at (3*dh,dv) {$v_4$};%
    \node[onode] (v5) at (4*dh,dv) {$v_5$};%
    \node[onode] (v6) at (3*dh,-dv) {$v_6$};%
    \node[onode] (v7) at (2*dh,-dv) {$v_7$};%
    \node[inode] (w8) at (dh,0) {$v_8$};%
    \node[inode] (w9) at (2*dh,0) {$v_9$};%
    \node[inode] (w10) at (3*dh,0) {$v_{10}$};%
    \node[inode] (w11) at (4*dh,0) {$v_{11}$};%
    \node[inode] (w12) at (4*dh,-dv) {$v_{12}$};%

    \draw (v1) edge[oedge,out=-120,in=120] (v2);%
    \path (v1) edge[cedge] (w8);%
    \path (v2) edge[cedge] (w8);%
    \path (w8) edge[iedge] (w9);%
    \path (w9) edge[iedge] (w10);%
    \path (w9) edge[cedge] (v3);%
    \path (v3) edge[oedge] (v4);%
    \path (v4) edge[oedge] (v5);%
    \path (v5) edge[cedge] (w11);%
    \path (w11) edge[iedge] (w12);%
    \path (v4) edge[cedge,out=-60,in=160] (w11);%
    \path (w12) edge[cedge] (v6);%
    \path (v6) edge[oedge] (v7);%
  \end{tikzpicture}
  \caption{The graph $G$ and connected components of $G[\Vo]$ (vertices in black) and $G[\Vi]$ (vertices in white) with connection between $G[\Vi]$ and $G[\Vo]$ shown as dashed lines.}
  \label{fig:graph-with-subgraphs}
\end{figure}
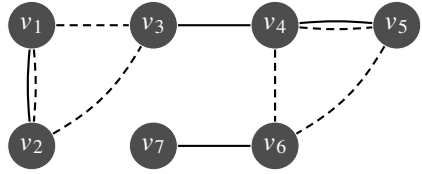
\begin{figure}[ht]
  \centering
  \begin{tikzpicture}[declare function={dv=0.8;dh=1.6;}]
    \node[onode] (v1) at (dh,dv) {$v_1$};%
    \node[onode] (v2) at (dh,-dv) {$v_2$};%
    \node[onode] (v3) at (2*dh,dv) {$v_3$};%
    \node[onode] (v4) at (3*dh,dv) {$v_4$};%
    \node[onode] (v5) at (4*dh,dv) {$v_5$};%
    \node[onode] (v6) at (3*dh,-dv) {$v_6$};%
    \node[onode] (v7) at (2*dh,-dv) {$v_7$};%

    \path (v1) edge[oedge,out=-95,in=95] (v2);%
    \path (v1) edge[cedge,out=-85,in=85] (v2);%
    \path (v3) edge[oedge] (v4);%
    \path (v4) edge[oedge,out=5,in=175] (v5);%
    \path (v4) edge[cedge,out=-5,in=185] (v5);%
    \path (v6) edge[oedge] (v7);%
    \path (v1) edge[cedge] (v3);%
    \path (v2) edge[cedge,out=30,in=-120] (v3);%
    \path (v4) edge[cedge] (v6);%
    \path (v5) edge[cedge,out=-120,in=30] (v6);%
  \end{tikzpicture}
  \caption{The reduced graph $G_{\outerVertex}$ of the graph $G$ from Figure~\ref{fig:graph-with-subgraphs} with connections through $\Vi$ shown as dashed lines. Note that $v_1$ and $v_2$ are considered to be connected by a single edge, likewise $v_4$ and $v_5$.}
  \label{fig:graph-with-subgraphs-reduced}
\end{figure}
\par
The significance of the reduced graph is that it determines the non-zero entries of $D_{\lambda,\Vo}$. For a matrix $Q\in\bbC^{n\times n}$ we write
\begin{equation}
  \label{eq:adjacency-class}
  Q\in\mathcal A(G)\iff Q_{kj}=0\text{ whenever }(k,j)\not\in E\text{ for }k\neq j.
\end{equation}
This means that $Q$ can only have non-zero off-diagonal entries in those places where the adjacency matrix $A_G$ of $G$, given by
\begin{equation}
  \label{eq:adjacency-matrix}
  [A_G]_{kj}:=
  \begin{cases}
    1 & \text{ if }(k,j)\in E, \\
    0 & \text{otherwise,}
  \end{cases}
\end{equation}
has a non-zero entry. In particular, $A_G\in\mathcal A(G)$.
\begin{proposition}
  \label{prop:main}
  Let $D_{\lambda,\Vo}$ be the Dirichlet-to-Neumann operator for the quantum graph $G$ with outer vertex set $\Vo$. Let $G_{\outerVertex}$ be the reduced graph associated with $\Vo$ and let $\lambda\in\mathbb C\setminus\sigma(-\Delta_{\Vo})$. Then the following assertions hold.
  \begin{enumerate}[label={\normalfont (\roman*)}]
  \item\label{item:main:adjacency-class} $D_{\lambda,\Vo}\in\mathcal A(G_{\outerVertex})$.
  \item\label{item:main:formula-for-dtn-operator-with-inner-vertices} Enumerate the vertices such that $V=\{v_k\colon k=1,\dots,n\}$ and $\Vo=\{v_k\colon k=1,\dots,m\}$. If $m<n$ write
    \begin{equation}
      \label{eq:DN-V}
      -D_{\lambda,V} =
      \begin{bmatrix}
        A & B^T \\
        B & C
      \end{bmatrix}
      ,
    \end{equation}
    where $A$ corresponds to the first $m$ nodes $\{v_1,\dots,v_m\} = \Vo$. More precisely, $A\in\bbC^{m \times m}$, $B\in\bbC^{m \times (n-m)}$, $B^T \in \bbC^{(n-m) \times m}$ is the transpose of $B$ and $C \in \bbC^{(n-m) \times (n-m)}$. Then, $C$ is invertible and $-D_{\lambda,\Vo}$ is given by the \emph{Schur complement} of $C$, that is,
    \begin{equation}
      \label{eq:DN-Vo}
      -D_{\lambda,\Vo} = A + B^T(-C)^{-1}B.
    \end{equation}
  \end{enumerate}
\end{proposition}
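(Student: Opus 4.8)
The plan is to prove part~\ref{item:main:formula-for-dtn-operator-with-inner-vertices} first and to deduce part~\ref{item:main:adjacency-class} from the resulting Schur complement formula. For part~\ref{item:main:formula-for-dtn-operator-with-inner-vertices} I would argue under the temporarily stronger assumption $\lambda\notin\sigma(-\Delta_V)$, where $-\Delta_V$ is the Dirichlet Laplacian on all of $V$ (the case $\Vi=\emptyset$ of $-\Delta_{\Vo}$), so that $D_{\lambda,V}$ itself exists and the block form~\eqref{eq:DN-V} is meaningful; the exceptional values $\lambda\in\sigma(-\Delta_V)\setminus\sigma(-\Delta_{\Vo})$ are handled at the end. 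Because $\lambda\notin\sigma(-\Delta_V)$, every $f$ with $\Delta_{\max}f+\lambda f=0$ on $G$ is determined edge by edge by its trace $\gamma f$, so $\nu f=D_{\lambda,V}\,\gamma f$ by the definition of $D_{\lambda,V}$; splitting this according to~\eqref{eq:DN-V} gives $(\nu f)|_{\Vo}=-\bigl(A\,(\gamma f)|_{\Vo}+B^T(\gamma f)|_{\Vi}\bigr)$ and $(\nu f)|_{\Vi}=-\bigl(B\,(\gamma f)|_{\Vo}+C\,(\gamma f)|_{\Vi}\bigr)$. I would first check that $C$ is invertible: if $Cz=0$ for some $z\in\bbC^{\Vi}$, let $g$ be the unique solution of $\Delta_{\max}g+\lambda g=0$ with $(\gamma g)|_{\Vo}=0$ and $(\gamma g)|_{\Vi}=z$; then $(\nu g)|_{\Vi}=-Cz=0$, so $g$ satisfies the Kirchhoff conditions on $\Vi$ and the Dirichlet conditions on $\Vo$ and hence lies in the domain of $-\Delta_{\Vo}$ with $-\Delta_{\Vo}g=\lambda g$; as $\lambda\notin\sigma(-\Delta_{\Vo})$ this forces $g=0$, so $z=(\gamma g)|_{\Vi}=0$. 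Now for $x\in\bbC^{\Vo}$ let $f$ be the unique solution of~\eqref{eq:boundary-value-problem} and put $z:=(\gamma f)|_{\Vi}$; the condition $(\nu f)|_{\Vi}=0$ reads $Bx+Cz=0$, i.e.\ $z=-C^{-1}Bx$, and then
\begin{equation*}
  -D_{\lambda,\Vo}\,x \;=\; -(\nu f)|_{\Vo} \;=\; Ax+B^Tz \;=\; \bigl(A-B^TC^{-1}B\bigr)x \;=\; \bigl(A+B^T(-C)^{-1}B\bigr)x,
\end{equation*}
which is~\eqref{eq:DN-Vo}.

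For part~\ref{item:main:adjacency-class} I would combine~\eqref{eq:DN-Vo} with two elementary observations. First, $D_{\lambda,V}\in\mathcal A(G)$: taking $\gamma f$ to be the unit vector supported at a vertex $v_j$, the corresponding solution $f$ vanishes identically on every edge not incident to $v_j$ (by edgewise uniqueness), so $(\nu f)(v_k)=0$ whenever $v_k\neq v_j$ is not adjacent to $v_j$ in $G$; consequently $A\in\mathcal A(G[\Vo])$ and the entry of $B$ (equivalently of $B^T$) indexed by $v_i\in\Vi$, $v_k\in\Vo$ vanishes unless $v_i$ and $v_k$ are adjacent in $G$. Second, $C$ has no off-diagonal entries linking distinct connected components of $G[\Vi]$, so it --- and hence $(-C)^{-1}$ --- is block diagonal with respect to these components; thus $[(-C)^{-1}]_{v_iv_{i'}}\neq 0$ only if $v_i$ and $v_{i'}$ lie in one component of $G[\Vi]$. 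Expanding the $(v_k,v_l)$-entry of $B^T(-C)^{-1}B$ as $\sum_{v_i,v_{i'}\in\Vi}[B^T]_{v_kv_i}[(-C)^{-1}]_{v_iv_{i'}}[B]_{v_{i'}v_l}$, a nonzero summand forces $v_k$ adjacent to $v_i$ and $v_{i'}$ adjacent to $v_l$ in $G$, and $v_i,v_{i'}$ joined by a path in $G[\Vi]$; concatenating the edge $v_kv_i$, that path, and the edge $v_{i'}v_l$ produces a path in $G$ from $v_k$ to $v_l$ whose interior vertices all lie in $\Vi$, so $(v_k,v_l)\in E_{\outerVertex}$ by Definition~\ref{def:reduced-graph}. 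Hence for $v_k\neq v_l$ with $(v_k,v_l)\notin E_{\outerVertex}$ both $A_{v_kv_l}$ and $[B^T(-C)^{-1}B]_{v_kv_l}$ vanish, so $[D_{\lambda,\Vo}]_{v_kv_l}=0$; this is $D_{\lambda,\Vo}\in\mathcal A(G_{\outerVertex})$. (If $\Vo=V$ there are no inner vertices, $G_{\outerVertex}=G$, and the claim is just $D_{\lambda,V}\in\mathcal A(G)$.) Finally, each entry of $D_{\lambda,\Vo}$ is holomorphic in $\lambda$ on the connected set $\bbC\setminus\sigma(-\Delta_{\Vo})$, while the argument above covers its dense subset $\bbC\setminus\bigl(\sigma(-\Delta_{\Vo})\cup\sigma(-\Delta_V)\bigr)$, so by the identity theorem part~\ref{item:main:adjacency-class} holds for \emph{every} $\lambda\notin\sigma(-\Delta_{\Vo})$.

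I expect the work to be organisational rather than deep: making precise the relation $\nu f=D_{\lambda,V}\,\gamma f$ and the block decomposition~\eqref{eq:DN-V}, establishing invertibility of $C$, and --- in part~\ref{item:main:adjacency-class} --- carefully matching ``a nonzero entry of the Schur complement $B^T(-C)^{-1}B$'' with the ``path through $\Vi$'' of Definition~\ref{def:reduced-graph}. The one point that needs genuine care is keeping track of which $\lambda$ each operator is defined for, which the concluding analyticity argument settles.
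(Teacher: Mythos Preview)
Your proof follows essentially the same route as the paper's: establish~\ref{item:main:formula-for-dtn-operator-with-inner-vertices} first via the block decomposition of $-D_{\lambda,V}$ and an eigenvalue argument for the invertibility of $C$, then deduce~\ref{item:main:adjacency-class} from the block-diagonal structure of $(-C)^{-1}$ over the connected components of $G[\Vi]$ together with the sparsity of $B$ (the paper packages this last step as a separate Proposition~\ref{prop:properties-for-dtn-operator-with-inner-vertices}). Your argument for $\ker C=\{0\}$ is in fact more direct than the paper's, which detours through $\operatorname{im}B\subseteq\operatorname{im}C$ and the symmetry of $C$ before reaching the same eigenfunction conclusion, and your closing analyticity step to cover the exceptional values $\lambda\in\sigma(-\Delta_V)\setminus\sigma(-\Delta_{\Vo})$ for part~\ref{item:main:adjacency-class} is a point the paper does not explicitly address.
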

The proposition shows in particular that the part $B^T(-C)^{-1}B$ contributes to the entry $[D_{\lambda,\Vo}]_{rs}$ if and only if there is a connection between $v_r$ and $v_s$ through $\Vi$, and that $A_{rs}$ contributes if and only if there is a direct connection between $v_r$ and $v_s$ in $\Vo$. Where these contributions occur is visualised in Figure~\ref{fig:graph-with-subgraphs-reduced} by dashed and solid lines, respectively.

The formula \eqref{eq:DN-Vo} is known in the field, see for instance \cite[Theorem~17.9]{kurasov:24:sgg} or \cite[Section~3.5]{berkolaiko:13:iqg}; it can also be found in \cite[Lemma~3.1]{kennedy:20:eqg}, which is based on an early draft of the present article. However, since we cannot find a complete proof of the full statement elsewhere, we will give a proof below. Together with assertion \ref{item:main:adjacency-class}, formula \eqref{eq:DN-Vo} is an important ingredient in some of our results on the positivity of the semigroup generated by $-D_{\lambda,\Vo}$, which we can now introduce.

We call the semigroup $(e^{-tD_{\lambda,\Vo}})_{t\geq 0}$ \emph{strongly positive} if all entries of the matrix $e^{-tD_{\lambda,\Vo}}$ are positive for all $t>0$. We call the semigroup $(e^{-tD_{\lambda,\Vo}})_{t\geq 0}$ \emph{eventually strongly positive} if there exists $t_0>0$ such that all entries of the matrix $e^{-tD_{\lambda,\Vo}}$ are positive for all $t\geq t_0$. Our main result is the following theorem.
\begin{theorem}
  \label{thm:main-result}
  Let $G$ be a connected quantum graph as introduced above, $\Vi$ and $\Vo$ the sets of inner and outer vertices, respectively, and let $G_{\outerVertex}$ be the reduced graph associated with $\Vo$. Then the following assertions hold.
  \begin{enumerate}[label={\normalfont (\roman*)}]
  \item \label{thm:main:basic}If $\lambda < \lambda_1(-\Delta_{\Vo})$, then the semigroup $(e^{-tD_{\lambda, \Vo}})_{t \ge 0}$ is strongly positive.
  \item \label{thm:main:tree}If the reduced graph $G_{\outerVertex}$ is a tree, then for every $\lambda\in\mathbb R\setminus\sigma(-\Delta_{\Vo})$, the semigroup $(e^{-tD_{\lambda, \Vo}})_{t \ge 0}$ is either positive or not eventually strongly positive.
  \end{enumerate}
  Suppose in addition that the family $(L_e)_{e\in E}$ of edge lengths is linearly independent over $\mathbb Q$. Then the following assertions hold.
  \begin{enumerate}[label={\normalfont (\roman*)}, resume]
  \item\label{thm:main:positive} For each $\hat \lambda \in \bbR$ there exists $\lambda > \hat \lambda$ such that the semigroup $(e^{-tD_{\lambda, \Vo}})_{t \ge 0}$ is strongly positive.
  \item\label{thm:main:nonpos} If $|\Vo| \ge 2$, then, for each $\hat \lambda \in \bbR$, there exists $\lambda > \hat \lambda$ such that the semigroup $(e^{-tD_{\lambda, \Vo}})_{t \ge 0}$ is not eventually positive.
  \item\label{thm:main:evpos} If the reduced graph $G_{\outerVertex}$ has a cycle, then, for each $\hat \lambda \in \bbR$, there exists $\lambda > \hat \lambda$ such that the semigroup $(e^{-tD_{\lambda, \Vo}})_{t \ge 0}$ is eventually strongly positive, but not positive.
  \end{enumerate}
\end{theorem}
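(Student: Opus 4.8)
My plan rests on three tools. First, Perron–Frobenius theory for Metzler matrices: a real square matrix $M$ generates a positive semigroup iff its off-diagonal entries are nonnegative, and a strongly positive one iff $M$ is moreover irreducible; and it is standard for symmetric matrices that $(e^{tM})_{t\ge 0}$ is eventually strongly positive iff the largest eigenvalue of $M$ is simple with a strictly positive eigenvector, while it fails to be eventually positive whenever that largest eigenvalue $\mu_1$ is simple but its eigenvector $v$ changes sign, since then some off-diagonal entry of $e^{tM}$ is asymptotically $e^{t\mu_1}v_iv_j<0$. Second, the explicit form of $-D_{\lambda,V}$: with $k=\sqrt\lambda$, each edge $e=(i,j)$ contributes $-k\cot(kL_e)$ to the $(i,i)$ and $(j,j)$ entries and $k/\sin(kL_e)$ to the $(i,j)$ and $(j,i)$ entries, and $-D_{\lambda,\Vo}$ is its Schur complement over $\Vi$ by Proposition~\ref{prop:main}, with off-diagonal entries that are nonzero exactly along the edges of $G_{\outerVertex}$. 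Third, for rationally independent $(L_e)_{e\in E}$, Weyl/Kronecker equidistribution lets us choose $\lambda=k^2$ arbitrarily large with $(kL_e \bmod 2\pi)_{e\in E}$ prescribed to any precision; whenever the prescribed target keeps the $\Vi$-block of $-D_{\lambda,V}$ invertible, the chosen $\lambda$ automatically lies outside $\sigma(-\Delta_{\Vo})$.

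For \ref{thm:main:basic}, when $\lambda<\lambda_1(-\Delta_{\Vo})\le\lambda_1(-\Delta_V)$ one checks directly that all relevant off-diagonal entries of $-D_{\lambda,V}$ are positive, and that $-C$ is positive definite (its quadratic form is the energy $\int_G|f'|^2-\lambda\int_G|f|^2$ of the $\lambda$-harmonic extension vanishing on $\Vo$) with nonpositive off-diagonal entries, hence a Stieltjes matrix; so $(-C)^{-1}$ is entrywise nonnegative and strictly positive within each component of $G[\Vi]$, and feeding this into \eqref{eq:DN-Vo} makes $-D_{\lambda,\Vo}$ irreducibly Metzler, giving strong positivity. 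For \ref{thm:main:tree}, since the tree $G_{\outerVertex}$ has no cycles there is a signature $D=\operatorname{diag}(\pm 1)$ making $-DD_{\lambda,\Vo}D$ Metzler on the support of $-D_{\lambda,\Vo}$. If $-D_{\lambda,\Vo}$ is itself Metzler the semigroup is positive; otherwise, if its support is disconnected the semigroup is block-diagonal and hence not eventually strongly positive, while if the support is the whole tree then $-DD_{\lambda,\Vo}D$ is irreducibly Metzler, so were the semigroup eventually strongly positive the simple largest eigenvalue of $-D_{\lambda,\Vo}$ would have both a strictly positive eigenvector $v$ and, by Perron–Frobenius for $-DD_{\lambda,\Vo}D$, the eigenvector $Dv$, which is not strictly positive since $D\ne\pm I$ — a contradiction.

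For \ref{thm:main:positive} and \ref{thm:main:nonpos} I would choose $k$ large with all $kL_e$ close modulo $2\pi$ to a common small $\delta_0>0$, respectively to $-\delta_0$ (i.e. just below a multiple of $2\pi$). Then $-D_{\lambda,V}\approx\mp\tfrac{k}{\delta_0}L_G$ for the ordinary graph Laplacian $L_G$, up to a relatively small error, and since the Schur complement of a graph Laplacian is again the (Kron-reduced) Laplacian $\hat L_{G_{\outerVertex}}$ of $G_{\outerVertex}$ with strictly positive edge weights, we get $-D_{\lambda,\Vo}\approx\mp\tfrac{k}{\delta_0}\hat L_{G_{\outerVertex}}$. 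In the first case the right-hand side is irreducibly Metzler and this survives the perturbation, giving strong positivity. In the second case $\tfrac{k}{\delta_0}\hat L_{G_{\outerVertex}}$ is the generator; for $|\Vo|\ge 2$ the largest eigenvalue of $\hat L_{G_{\outerVertex}}$ is positive and its eigenspace lies in $\mathbf 1^\perp$, so the associated spectral projection has zero row sums and a positive diagonal entry, hence a negative off-diagonal entry, so $e^{t\hat L_{G_{\outerVertex}}}$ — and therefore $e^{-tD_{\lambda,\Vo}}$ — is not eventually positive.

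The delicate part is \ref{thm:main:evpos}: one must realise, for arbitrarily large $\lambda$, a $-D_{\lambda,\Vo}$ that is irreducible with all off-diagonal entries positive except for one, not-too-large, negative entry on an edge $e_0$ of a cycle of $G_{\outerVertex}$; then the semigroup is not positive, while a perturbation argument keeps the dominant eigenvalue simple with strictly positive eigenvector, so it is eventually strongly positive — and the cycle hypothesis is precisely what removes the gauge obstruction used in \ref{thm:main:tree}. In the clean case where $e_0=(v_r,v_s)$ is a genuine edge of $G$ with $v_r,v_s$ not also joined through $\Vi$, one takes $kL_e$ near $\delta_0$ for $e\ne e_0$ and $kL_{e_0}$ near $3\pi/2$, so $-D_{\lambda,V}\approx-\tfrac{k}{\delta_0}L_{G\setminus e_0}-kA_{e_0}$; its Schur complement is $-\tfrac{k}{\delta_0}\hat L'-kA_{e_0}$ with $\hat L'$ the irreducible Kron Laplacian of $G_{\outerVertex}\setminus e_0$, and since $G_{\outerVertex}\setminus e_0$ has no edge between $v_r$ and $v_s$, the $\Vo$-supported perturbation $-kA_{e_0}$ makes exactly the $(v_r,v_s)$-entry negative while the dominant eigenvalue, separated from the rest of the spectrum by order $k/\delta_0$, stays simple with eigenvector near $\mathbf 1$. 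When no cycle edge has this direct-only form, the single negative off-diagonal entry instead has to be manufactured from the term $B^T(-C)^{-1}B$ by pushing $-C$ close to (but not onto) its singular locus along a kernel direction $w$ with $B^Tw$ sign-changing on $\{v_r,v_s\}$ but sign-definite on the rest of the cycle; controlling this Schur-complement contribution quantitatively, and verifying that the dominant eigenvector stays strictly positive as that entry crosses zero, is the step I expect to be the main obstacle.
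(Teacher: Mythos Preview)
Your approaches to \ref{thm:main:basic} and \ref{thm:main:tree} are valid and genuinely different from the paper's. For \ref{thm:main:basic} the paper bypasses the Schur complement entirely, proving $[D_{\lambda,\Vo}]_{kj}\le(\lambda-\lambda_1)\|\min(u_k,u_j)\|^2<0$ directly via the variational identity $[D_{\lambda,\Vo}]_{kj}=a_\lambda(u_k-w,u_j-w)-a_\lambda(w,w)$ with $w=\min(u_k,u_j)$; your Stieltjes route also works once you observe that $y^T(-C)y=a_\lambda(f_y,f_y)>0$ for $\lambda<\lambda_1(-\Delta_{\Vo})$, where $f_y$ is the $\lambda$-harmonic extension of $(0,y)$. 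For \ref{thm:main:tree} the paper simply invokes an external result of Johnson--Tarazaga on sign patterns, whereas your signature-gauge argument is self-contained and nicer. For \ref{thm:main:positive} both proofs are essentially the same Kronecker argument targeting the graph Laplacian.

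Your argument for \ref{thm:main:nonpos} has a soft spot at the final ``therefore'': the spectral-projection reasoning shows that $e^{t\hat L}$ is not eventually positive, but you still need to transfer this to $e^{-tD_{\lambda,\Vo}}$, and ``not eventually positive'' is not obviously open. The paper avoids this by working exactly rather than in the limit: once the off-diagonal entries of $D_{\lambda,\Vo}$ (no minus) are close to the strictly positive edge weights of the Kron reduction, $D_{\lambda,\Vo}$ itself is irreducible Metzler, so $(e^{tD_{\lambda,\Vo}})_{t\ge0}$ is strongly positive; since the matrix is not diagonal for $|\Vo|\ge2$, the elementary fact that a positive matrix \emph{group} must have diagonal generator (Lemma~\ref{lem:positive-matrix-group}) rules out eventual positivity of $(e^{-tD_{\lambda,\Vo}})_{t\ge0}$ with no perturbation needed.

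For \ref{thm:main:evpos} you correctly identify the obstacle, and the paper's device to overcome it is the one ingredient you are missing. Rather than fixing $kL_{e_0}$ near $3\pi/2$, which gives a negative entry of uncontrolled size relative to the others, the paper uses the full freedom of Corollary~\ref{cor:ergodic}: one may choose any real $\gamma_{kj}$ for each edge of $G$, and the rescaled limit $-Q$ then has off-diagonal entries $1/\gamma_{kj}$. Taking $\gamma_{kj}=1$ for all $(k,j)\in E$ except the one edge and putting $\gamma_{rs}=-\bigl(\varepsilon+[\tilde B^{T}(-\tilde C)^{-1}\tilde B]_{rs}\bigr)^{-1}$ makes the $(r,s)$ entry of the Schur complement exactly $-\varepsilon$, while all other off-diagonal entries coincide with those of the Kron reduction of the graph with that edge deleted. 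As $\varepsilon\downarrow0$ this Schur complement converges to an irreducible Metzler matrix, so the openness of eventual strong positivity (Lemma~\ref{lem:eventually-positive-semigroups}(ii)) gives eventual strong positivity without positivity first for small $\varepsilon$, and then, by a second application of openness, for the actual $-D_{\lambda_\ell,\Vo}$ with $\ell$ large. Note that this still manipulates an edge of $G$ with both endpoints in $\Vo$; in the pure through-$\Vi$ situation you describe, the analogous move is to make a single $\gamma$ along a $\Vi$-path negative and tune it so that $[B^{T}(-C)^{-1}B]_{rs}=-\varepsilon$, which the same $\varepsilon\to0$ and openness argument then handles.
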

We note that by definition, a cycle is a closed path in the graph. Such a path traverses each vertex and edge exactly once. As we work with simple graphs a cycle thus requires that $|\Vo|\geq 3$. We also note that, as a consequence of Lemma~\ref{lem:eventually-positive-semigroups} below, the set of $\lambda\in\mathbb R$ where the semigroup is eventually strongly positive is open.

As discussed above, the results on positivity are inspired by those of the Dirichlet-to-Neumann semigroup on domains from~\cite{arendt:12:fei} and in particular those on the disk from~\cite{daners:14:nps} for large positive values of $\lambda$. See also \cite[Section~6.2]{daners:16:eps} and \cite[Section~6]{daners:16:eap}.
\begin{remarks}
  \label{rem:various}
  (a) The condition in Theorem~\ref{thm:main-result}\ref{thm:main:positive}--\ref{thm:main:evpos} that the lengths of the edges be linearly independent cannot be dropped. If, for instance, there are no inner vertices and the lengths of all edges coincide, then it follows from Proposition~\ref{prop:explicit-formula-for-dtn-operator} below that, for each $\lambda \in \bbR \setminus \sigma(-\Delta_V)$, all off-diagonal entries of $D_{\lambda,V} = D_{\lambda,\Vo}$ have the same sign. If this sign is positive, then the semigroup $(e^{-tD_{\lambda,V}})_{t \ge 0}$ is strongly positive (see also Theorem~\ref{thm:main-commensurable}). If, on the other hand, the sign is negative, then the semigroup cannot even be eventually positive according to Lemma~\ref{lem:positive-matrix-group} below (since $D_{\lambda,V}$ is not a diagonal matrix). Hence, assertion~\ref{thm:main:evpos} of Theorem~\ref{thm:main-result} is not fulfilled in this case.

  (b) Theorem~\ref{thm:main-result}\ref{thm:main:nonpos} fails if we do not assume that $|\Vo| \geq 2$ since every real semigroup in dimension one is strongly positive.

  (c) In Theorem~\ref{thm:main-result}\ref{thm:main:evpos}, the condition that the reduced graph have a cycle is essential. A graph without a cycle by definition is a tree, and for such cases Theorem~\ref{thm:main-result}\ref{thm:main:tree} shows that eventual strong positivity without positivity is not possible. The reason is that for the semigroup to be strongly eventually positive but not positive, we require at least one negative off-diagonal entry. However, a minimal pattern of positive off-diagonal elements in $-D_{\lambda,\Vo}$ is required for eventual strong positivity. Changing the sign of any of them will make the semigroup not eventually positive. The theorem shows that such minimal patterns are given by trees. This phenomenon is illustrated in Examples~\ref{ex:evpos-counterexample} and~\ref{ex:star-graph}.

  (d) The assumption in Theorem~\ref{thm:main-result}\ref{thm:main:evpos} is never satisfied if $|\Vo|=1$, and the statement is not true if $|\Vo| = 2$ since in dimension two a semigroup is either positive or not eventually positive, see \cite[Proposition~6.2]{daners:16:eps}. A counterexample is also provided by the graph consisting of two vertices and one edge between them, that is, the Dirichlet-to-Neumann operator on an interval, see \cite[Section~3]{daners:14:nps}.
\end{remarks}
We can also say something about positivity when the family $(L_e)_{e\in E}$ of edge lengths is commensurable, that is, there exists a family $(n_e)_{e\in E}$ and an $L>0$ such that $n_eL=L_e$ for all $e\in E$.
\begin{theorem}
  \label{thm:main-commensurable}
  Suppose that the family of edge lengths $(L_e)_{e\in E}$ is commensurable. Then there exists $L>0$ such that $(e^{-tD_{\lambda,\Vo}})_{t>0}$ is strongly positive for all
  \begin{equation}
    \label{eq:main-commensurable}
    \lambda\in\left\{\left(\sqrt{\mu}+\dfrac{2\pi p}{L}\right)^2\colon \mu\in\left(0,\lambda_1(-\Delta_{\Vo})\right),\,p\in\mathbb N\right\}.
  \end{equation}
\end{theorem}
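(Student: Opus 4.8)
The plan is to reduce the commensurable case to a rescaling together with the basic positivity result of Theorem~\ref{thm:main-result}\ref{thm:main:basic}. First I would exploit the scaling behaviour of the Dirichlet-to-Neumann operator under a common rescaling of all edge lengths. If $L_e = n_e L$ with $n_e \in \mathbb{N}$ and $L>0$, then the quantum graph $G$ with lengths $(L_e)$ is, up to rescaling by the factor $L$, a graph $\tilde G$ all of whose edges have integer length $n_e$; equivalently $\tilde G$ is obtained from a combinatorial graph by inserting $n_e - 1$ degree-two "dummy" vertices (with Kirchhoff conditions, which are transparent at degree-two vertices) on each edge, so that all edges have length $1$. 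I would record the scaling identity $D_{\lambda, \Vo}$ for lengths $(L_e)$ equals $L^{-1}$ times $D_{L^2\lambda, \Vo}$ for lengths $(L_e/L)$; this follows from the substitution $x \mapsto x/L$ in the boundary value problem~\eqref{eq:boundary-value-problem}, since $\Delta_{\max}$ scales like $L^{-2}$ and the normal derivative $\nu$ like $L^{-1}$. Consequently $e^{-t D_{\lambda,\Vo}}$ for the original graph equals $e^{-(t/L) D_{L^2 \lambda, \Vo}}$ for the rescaled graph, so strong positivity is invariant under this rescaling and it suffices to prove the statement for the graph with edge lengths $n_e \in \mathbb{N}$.

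Next I would observe that, for a graph with all edge lengths in $\mathbb{N}$, both the Dirichlet Laplacian $-\Delta_{\Vo}$ and the Dirichlet-to-Neumann operator have spectra that are $\frac{2\pi}{1}$-periodic in $\sqrt{\lambda}$ — more precisely, the eigenfunctions of~\eqref{eq:boundary-value-problem} are built from $\cos(\sqrt{\lambda}\, x)$ and $\sin(\sqrt{\lambda}\, x)$ on each edge, and replacing $\sqrt{\lambda}$ by $\sqrt{\lambda} + 2\pi p$ (with $p \in \mathbb{N}$) leaves the values and derivatives of each such edge function at every integer point unchanged, hence leaves the matching conditions at all vertices (including the degree-two dummy vertices) unchanged. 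This gives the operator identity $D_{(\sqrt{\mu} + 2\pi p)^2, \Vo} = D_{\mu, \Vo}$ for all $\mu \in \mathbb{R}\setminus\sigma(-\Delta_{\Vo})$ and all $p \in \mathbb{N}$, and in particular $\sigma(-\Delta_{\Vo})$ is invariant under $\sqrt{\mu} \mapsto \sqrt{\mu} + 2\pi p$. Therefore, if $\mu \in (0, \lambda_1(-\Delta_{\Vo}))$, then $\mu \notin \sigma(-\Delta_{\Vo})$ and $(e^{-t D_{(\sqrt\mu + 2\pi p)^2, \Vo}})_{t \ge 0} = (e^{-t D_{\mu,\Vo}})_{t \ge 0}$, which is strongly positive by Theorem~\ref{thm:main-result}\ref{thm:main:basic} since $\mu < \lambda_1(-\Delta_{\Vo})$. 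Undoing the rescaling (with the scaling factor $L$ from the first step) turns the set $\{(\sqrt\mu + 2\pi p)^2\}$ into the set~\eqref{eq:main-commensurable}, completing the argument.

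The main obstacle I anticipate is making the periodicity claim fully rigorous at the level of operators rather than just eigenvalues: I need the dummy-vertex reformulation to be set up carefully so that the solution space of~\eqref{eq:boundary-value-problem} genuinely depends on $\sqrt{\lambda}$ only through its value modulo $2\pi$, and I need to check that this reformulation does not change $D_{\lambda,\Vo}$ (which it should not, since degree-two Kirchhoff vertices impose exactly $C^1$-matching, i.e. nothing beyond smoothness of the edge functions across that point). A secondary technical point is bookkeeping the value of $L$: the theorem only claims existence of some $L > 0$, so I can take $L$ to be any common "unit length" witnessing commensurability, and then the frequencies $\sqrt\mu$ of the unrescaled graph at length scale $L$ correspond to $\sqrt\mu L$ at unit scale — one must be careful that the shift $2\pi p$ on the unit-length graph becomes the shift $2\pi p / L$ on the original graph, which is exactly what appears in~\eqref{eq:main-commensurable}. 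Everything else is a routine consequence of Theorem~\ref{thm:main-result}\ref{thm:main:basic} and the elementary scaling and periodicity identities.
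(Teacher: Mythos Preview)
Your overall strategy---periodicity in $\sqrt{\lambda}$ combined with Theorem~\ref{thm:main-result}\ref{thm:main:basic}---is the same idea as the paper's, but there is a concrete error in your periodicity claim, and the route you take is more elaborate than necessary.

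The error: it is not true that replacing $\sqrt{\lambda}$ by $\sqrt{\lambda}+2\pi p$ leaves the \emph{derivatives} of $\cos(\sqrt{\lambda}\,x)$ and $\sin(\sqrt{\lambda}\,x)$ unchanged at integer points. The derivative of $\sin(\sqrt{\lambda}\,x)$ at an integer $x$ is $\sqrt{\lambda}\cos(\sqrt{\lambda}\,x)$, which picks up the factor $(\sqrt{\mu}+2\pi p)/\sqrt{\mu}$ under the shift. Consequently the asserted operator identity $D_{(\sqrt{\mu}+2\pi p)^2,\Vo}=D_{\mu,\Vo}$ is false; what you actually get is
\[
\frac{1}{\sqrt{\lambda}}\,D_{\lambda,\Vo}=\frac{1}{\sqrt{\mu}}\,D_{\mu,\Vo},
\qquad \lambda=(\sqrt{\mu}+2\pi p)^2,
\]
i.e.\ the two operators differ by the positive scalar $\sqrt{\lambda}/\sqrt{\mu}$. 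This still suffices, since strong positivity of $(e^{-tD})_{t\ge0}$ is preserved under positive scalar rescaling of $D$; but your argument as written claims more than is true, and the ``derivatives unchanged'' step would not survive scrutiny.

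On the route: the paper bypasses both the rescaling and the dummy-vertex construction entirely. It works directly on the original graph with lengths $L_e=n_eL$, and simply observes that for $\lambda=(\sqrt{\mu}+2\pi p/L)^2$ one has $\sqrt{\lambda}\,L_e=\sqrt{\mu}\,L_e+2\pi p\,n_e$, so $\sin(\sqrt{\lambda}L_e)=\sin(\sqrt{\mu}L_e)$ and likewise for cosine. Plugging this into the explicit formulas~\eqref{eq:alpha}--\eqref{eq:beta} for the entries of $D_{\lambda,V}$, and then into the Schur complement~\eqref{eq:DN-Vo}, gives the scalar identity above in one line. Your rescaling step and the dummy-vertex reformulation are correct in themselves but add bookkeeping (and the ``main obstacle'' you anticipate) that the explicit-entry computation avoids.
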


We will start by giving a collection of examples illustrating the different possibilities in Section~\ref{sec:examples}. The formal construction of the Dirichlet-to-Neumann operator then follows, in Section~\ref{sec:dtn-operator}. Even though known, we include complete proofs since it provides the tools to prove Theorem~\ref{thm:main-result}\ref{thm:main:basic}, as well as to make the paper more self-contained. In Section~\ref{sec:matrix-representation} we establish the matrix representation of the Dirichlet-to-Neumann operator, including a proof of Proposition~\ref{prop:main}. In Section~\ref{sec:limit-theorems} we establish the existence of sequences of $\lambda$ that allow to control the entries of $D_{\lambda,\Vo}$ in a very precise way. These results are the key for proving Theorems~\ref{thm:main-result} and~\ref{thm:main-commensurable} in Section~\ref{sec:positivity}. In Appendix~\ref{sec:matrix-semigroups} we establish some facts on positive and eventually positive matrix semigroups.

\section{Examples}
\label{sec:examples}
In this section we provide some examples, supported by numerical calculations. The plots of the spectra were numerically created using the Python module \emph{Numpy}\footnote{\url{https://numpy.org}}. The operators $D_{\lambda,\Vo}$ were computed using the Schur complement formula from Proposition~\ref{prop:main}\ref{item:main:formula-for-dtn-operator-with-inner-vertices} together with Proposition~\ref{prop:explicit-formula-for-dtn-operator}, both derived in Section~\ref{sec:matrix-representation} (see Example~\ref{ex:nw-4-3-matrix} for an example). Positivity was tested by checking that the off-diagonal elements of $-D_{\lambda,\Vo}$ are non-negative. The eventual strong positivity was numerically tested by checking that the dominant eigenfunction has components that are all of the same sign. Since $D_{\lambda,\Vo}$ is symmetric, the spectral projection associated with the spectral bound is strongly positive in that case. Hence the semigroup is strongly eventually positive, but not positive by \cite[Theorem~5.4]{daners:16:eps}, provided some off-diagonal entries of $-D_{\lambda,\Vo}$ are negative.

The simplest example is the a graph with two nodes and one connection, that is, an interval. That semigroup was investigated in detail in \cite[Section~3]{daners:14:nps}, see also Figure~\ref{fig:graph-2-2}. In that example, positivity and non-positivity of the semigroup generated by the Dirichlet-to-Neumann operator alternate at every eigenvalue of the Dirichlet Laplace operator. They are given by $\lambda_k=\left(\dfrac{\pi k}{L}\right)^2$, where $k\in\bbN$.
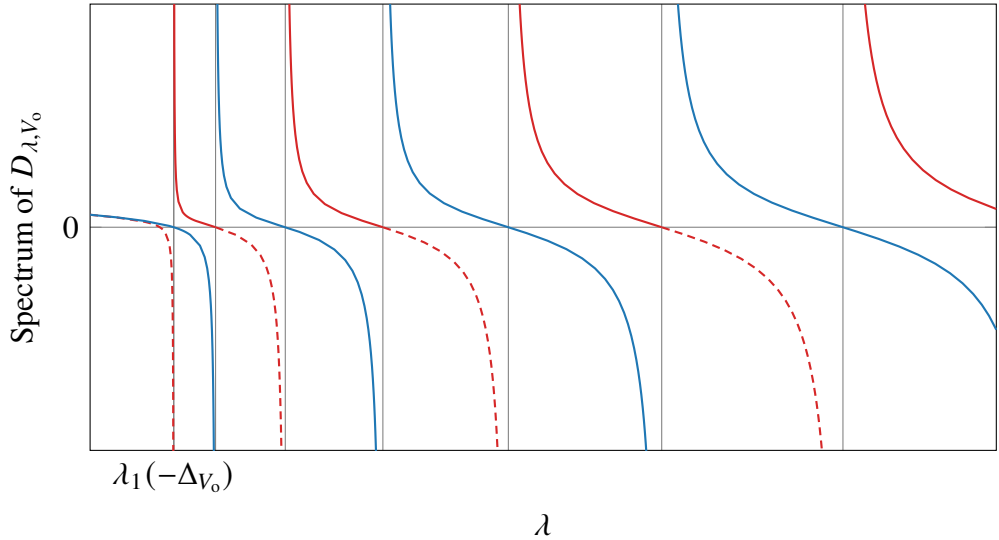
\begin{figure}[ht]
  \centering
  \begin{tikzpicture}
    \begin{axis}[%
        xmin=-5,%
        xmax=60,%
        ymin=-40,%
        ymax=40,%
        xtick={1,4,9,16,25,36,49},%
        ytick={0},%
        extra x ticks={1},
        extra x tick labels={$\lambda_1(-\Delta_{\Vo})$},
        xlabel={$\lambda$},%
        ylabel={Spectrum of $D_{\lambda,\Vo}$},%
      ]%
      \begin{scope}[tred]
        \addplot[spectrum] coordinates {%
            (1.020,64.610)(1.040,32.772)(1.060,22.155)(1.080,16.843)(1.100,13.653)(1.120,11.524)(1.160,8.857)(1.220,6.666)(1.340,4.581)(1.640,2.715)(1.660,2.647)(1.680,2.582)(1.700,2.521)(1.720,2.463)(1.740,2.408)(1.760,2.355)(1.780,2.304)(1.800,2.255)(1.820,2.209)(1.840,2.164)(1.860,2.121)(1.880,2.079)(1.900,2.039)(1.920,2.001)(1.940,1.963)(1.960,1.927)(1.980,1.892)(2,1.858)(2.020,1.825)(2.040,1.793)(2.060,1.761)(2.080,1.731)(2.100,1.701)(2.120,1.672)(2.140,1.644)(2.160,1.617)(2.180,1.590)(2.200,1.563)(2.220,1.538)(2.240,1.512)(2.260,1.488)(2.280,1.463)(2.300,1.440)(2.320,1.416)(2.340,1.393)(2.360,1.371)(2.380,1.348)(2.400,1.327)(2.420,1.305)(2.440,1.284)(2.460,1.263)(2.480,1.242)(2.500,1.222)(2.520,1.202)(2.540,1.182)(2.560,1.162)(2.580,1.143)(2.600,1.124)(4,0)};%
        \addplot[spectrum] coordinates {%
            (9.280,41.804)(9.300,39.070)(9.320,36.677)(9.340,34.565)(9.380,31.007)(9.420,28.124)(9.460,25.740)(9.500,23.736)(9.560,21.264)(9.640,18.684)(9.720,16.672)(9.840,14.364)(10,12.136)(10.220,10.006)(10.540,7.960)(11.040,5.985)(11.840,4.147)(13.040,2.529)(14.520,1.169)(16,0)};%
        \addplot[spectrum] coordinates {%
            (25.800,40.530)(25.860,37.738)(25.920,35.308)(25.980,33.173)(26.060,30.700)(26.140,28.571)(26.240,26.292)(26.360,23.994)(26.500,21.771)(26.660,19.682)(26.860,17.564)(27.100,15.541)(27.400,13.562)(27.800,11.553)(28.320,9.621)(29.020,7.742)(29.960,5.948)(31.140,4.324)(32.520,2.876)(34.040,1.553)(35.620,0.298)(36,0)};%
        \addplot[spectrum] coordinates {%
            (50.520,41.596)(50.600,39.522)(50.700,37.202)(50.800,35.136)(50.920,32.937)(51.060,30.691)(51.220,28.464)(51.400,26.308)(51.600,24.253)(51.840,22.159)(52.120,20.110)(52.460,18.051)(52.860,16.069)(53.360,14.074)(53.980,12.113)(54.740,10.229)(55.660,8.447)(56.760,6.771)(58.040,5.206)(60,3.266)};%
      \end{scope}
      \begin{scope}[tred]
        \addplot[spectrumpos] coordinates {%
            (-5,2.232)(-3.060,1.735)(-1.180,1.017)(-1.100,0.974)(-1.080,0.963)(-1.060,0.952)(-1.040,0.940)(-1.020,0.929)(-1,0.917)(-0.980,0.905)(-0.960,0.894)(-0.940,0.882)(-0.920,0.869)(-0.900,0.857)(-0.880,0.845)(-0.860,0.832)(-0.840,0.819)(-0.820,0.806)(-0.800,0.793)(-0.780,0.779)(-0.760,0.766)(-0.740,0.752)(-0.720,0.738)(-0.700,0.724)(-0.680,0.710)(-0.660,0.695)(-0.640,0.680)(-0.620,0.665)(-0.600,0.650)(-0.580,0.634)(-0.560,0.618)(-0.540,0.602)(-0.520,0.586)(-0.500,0.569)(-0.480,0.552)(-0.460,0.534)(-0.440,0.517)(-0.420,0.498)(-0.400,0.480)(-0.380,0.461)(-0.360,0.442)(-0.340,0.422)(-0.320,0.402)(-0.300,0.381)(-0.280,0.360)(-0.260,0.339)(-0.240,0.317)(-0.220,0.294)(-0.200,0.271)(-0.180,0.247)(-0.160,0.223)(-0.140,0.198)(-0.120,0.172)(-0.100,0.145)(-0.080,0.118)(-0.060,0.090)(-0.040,0.061)(-0.020,0.031)(0,0)(0.020,-0.032)(0.040,-0.065)(0.060,-0.099)(0.080,-0.135)(0.100,-0.171)(0.120,-0.210)(0.140,-0.249)(0.160,-0.291)(0.180,-0.334)(0.200,-0.379)(0.220,-0.426)(0.240,-0.475)(0.260,-0.526)(0.280,-0.580)(0.300,-0.637)(0.320,-0.696)(0.340,-0.759)(0.360,-0.826)(0.380,-0.896)(0.680,-2.917)(0.800,-5.344)(0.860,-8.092)(0.900,-11.744)(0.920,-14.933)(0.940,-20.245)(0.960,-30.862)(0.980,-62.700)};%
        \addplot[spectrumpos] coordinates {%
            (4.020,-0.016)(5.480,-1.389)(6.560,-3.108)(7.220,-5.018)(7.620,-6.986)(7.880,-8.981)(8.060,-10.987)(8.200,-13.156)(8.300,-15.229)(8.380,-17.362)(8.460,-20.121)(8.520,-22.790)(8.560,-24.971)(8.600,-27.586)(8.640,-30.779)(8.680,-34.769)(8.700,-37.162)(8.720,-39.895)(8.740,-43.049)};%
        \addplot[spectrumpos] coordinates {%
            (16.020,-0.016)(17.560,-1.293)(18.980,-2.732)(20.160,-4.351)(21.080,-6.141)(21.760,-8.024)(22.280,-10.038)(22.660,-12.037)(22.960,-14.115)(23.200,-16.258)(23.380,-18.270)(23.540,-20.464)(23.680,-22.813)(23.780,-24.816)(23.880,-27.171)(23.960,-29.379)(24.040,-31.950)(24.100,-34.176)(24.160,-36.718)(24.220,-39.649)(24.260,-41.865)};%
        \addplot[spectrumpos] coordinates {%
            (36.020,-0.016)(37.580,-1.272)(39.080,-2.606)(40.480,-4.071)(41.720,-5.678)(42.760,-7.396)(43.620,-9.228)(44.320,-11.150)(44.880,-13.110)(45.340,-15.134)(45.720,-17.209)(46.020,-19.202)(46.280,-21.271)(46.500,-23.347)(46.700,-25.569)(46.860,-27.639)(47,-29.716)(47.120,-31.739)(47.240,-34.033)(47.340,-36.194)(47.440,-38.629)(47.520,-40.812)};%
      \end{scope}
      \begin{scope}[tblue]
        \addplot[spectrum] coordinates {%
            (-5,2.240)(-3.040,1.758)(-1.120,1.137)(0.680,0.233)(1.140,-0.114)(1.160,-0.131)(1.180,-0.148)(1.200,-0.165)(1.220,-0.183)(1.240,-0.201)(1.260,-0.219)(1.280,-0.237)(1.300,-0.255)(1.320,-0.274)(1.340,-0.293)(1.360,-0.312)(1.380,-0.331)(1.400,-0.350)(1.420,-0.370)(1.440,-0.390)(1.460,-0.410)(1.480,-0.431)(1.500,-0.451)(1.520,-0.472)(1.540,-0.494)(1.560,-0.515)(1.580,-0.537)(1.600,-0.559)(1.620,-0.581)(1.640,-0.604)(1.660,-0.627)(1.680,-0.651)(1.700,-0.674)(1.720,-0.698)(1.740,-0.723)(1.760,-0.747)(1.780,-0.773)(1.800,-0.798)(1.820,-0.824)(1.840,-0.850)(1.860,-0.877)(1.880,-0.904)(1.900,-0.932)(1.920,-0.960)(1.940,-0.988)(1.960,-1.017)(1.980,-1.047)(2,-1.077)(2.020,-1.107)(2.040,-1.138)(2.060,-1.170)(2.080,-1.202)(2.100,-1.234)(2.120,-1.268)(2.140,-1.301)(2.160,-1.336)(2.180,-1.371)(2.880,-3.293)(3.220,-5.363)(3.400,-7.369)(3.520,-9.523)(3.600,-11.667)(3.660,-13.930)(3.700,-15.938)(3.740,-18.561)(3.780,-22.134)(3.800,-24.454)(3.820,-27.289)(3.840,-30.831)(3.860,-35.384)(3.880,-41.453)};%
        \addplot[spectrum] coordinates {%
            (4.120,43.362)(4.140,37.293)(4.160,32.740)(4.180,29.198)(4.200,26.363)(4.220,24.042)(4.260,20.469)(4.300,17.846)(4.340,15.837)(4.400,13.572)(4.480,11.427)(4.600,9.269)(4.780,7.256)(5.100,5.257)(5.740,3.343)(6.920,1.724)(8.460,0.420)(10.020,-0.842)(11.420,-2.292)(12.520,-3.993)(13.300,-5.887)(13.820,-7.821)(14.200,-9.892)(14.480,-12.049)(14.680,-14.132)(14.840,-16.302)(14.960,-18.360)(15.060,-20.469)(15.140,-22.506)(15.220,-24.957)(15.280,-27.149)(15.340,-29.737)(15.380,-31.739)(15.420,-34.015)(15.460,-36.628)(15.500,-39.656)(15.540,-43.210)};%
        \addplot[spectrum] coordinates {%
            (16.500,41.564)(16.540,38.535)(16.580,35.923)(16.620,33.646)(16.660,31.643)(16.720,29.055)(16.780,26.862)(16.860,24.410)(16.940,22.372)(17.040,20.261)(17.160,18.201)(17.320,16.028)(17.520,13.941)(17.780,11.909)(18.140,9.880)(18.640,7.930)(19.360,6.056)(20.380,4.314)(21.680,2.792)(23.180,1.443)(24.760,0.188)(26.320,-1.065)(27.820,-2.426)(29.160,-3.923)(30.300,-5.584)(31.220,-7.381)(31.940,-9.273)(32.500,-11.227)(32.940,-13.225)(33.300,-15.316)(33.580,-17.353)(33.820,-19.501)(34.020,-21.677)(34.180,-23.754)(34.320,-25.889)(34.440,-28.019)(34.540,-30.058)(34.640,-32.392)(34.720,-34.519)(34.800,-36.928)(34.860,-38.954)(34.920,-41.203)};%
        \addplot[spectrum] coordinates {%
            (37.140,40.859)(37.200,38.832)(37.280,36.423)(37.360,34.295)(37.460,31.960)(37.560,29.920)(37.680,27.789)(37.820,25.652)(37.980,23.573)(38.180,21.394)(38.400,19.406)(38.680,17.333)(39.020,15.311)(39.440,13.339)(39.980,11.375)(40.660,9.494)(41.540,7.664)(42.620,5.970)(43.900,4.417)(45.340,2.996)(46.860,1.695)(48.440,0.439)(50.020,-0.809)(51.560,-2.093)(53.040,-3.467)(54.400,-4.939)(55.620,-6.531)(56.680,-8.238)(57.580,-10.045)(58.340,-11.955)(58.960,-13.892)(59.480,-15.890)(60,-18.370)};%
      \end{scope}
    \end{axis}
  \end{tikzpicture}
  \caption{Spectrum on an interval. Dashed lines indicate where the semigroup is (strongly) positive.}
  \label{fig:graph-2-2}
\end{figure}

That alternation does not need to be the case for more general graphs as the following example shows, where the branches of eigenvalue curves come in groups.

\begin{example}
  \label{ex:nw-3-2}
  Consider the graph as shown in Figure~\ref{fig:nw-3-2}.
  \begin{figure}[ht]
    \centering
    \begin{tikzpicture}
      \node[onode] (1) at (0,0) {$v_1$};%
      \node[onode] (2) [right of=1] {$v_2$};%
      \node[inode] (3) [right of=2] {$v_3$};%
      \path (1) edge[nedge]  node {$L_{12}=1$} (2);%
      \path (2) edge[nedge]  node {$L_{23}=\sqrt{17}$} (3);%
    \end{tikzpicture}
    \caption{Simple graph with two edges and outer boundary of two nodes.}
    \label{fig:nw-3-2}
  \end{figure}
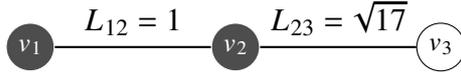
  We recall that the dark nodes represent $\Vo$, the white ones $\Vi$. Using formula \eqref{eq:DN-Vo} we can explicitly compute $D_{\lambda,\Vo}$. Since the matrix is a $2\times 2$ matrix the semigroup generated by $-D_{\lambda,\Vo}$ is either positive or does not have any positivity property; see \cite[Proposition~6.2]{daners:16:eps}. We can actually analyse this example analytically using formula~\eqref{eq:DN-Vo}. From Proposition~\ref{prop:explicit-formula-for-dtn-operator} we have
  \begin{equation*}
    D_{\lambda,V}=
    \begin{bmatrix}
      \alpha_{12} & -\beta_{12}             & 0           \\
      -\beta_{21} & \alpha_{21}+\alpha_{23} & -\beta_{23} \\
      0           & -\beta_{32}             & \alpha_{32}
    \end{bmatrix}
  \end{equation*}
  and hence by Proposition~\ref{prop:main}~\ref{item:main:formula-for-dtn-operator-with-inner-vertices} we conclude that
  \begin{equation*}
    D_{\lambda,\Vo}=
    \begin{bmatrix}
      \alpha_{12} & -\beta_{12}             \\
      -\beta_{21} & \alpha_{21}+\alpha_{23}
    \end{bmatrix}
    -\frac{1}{\alpha_{32}}
    \begin{bmatrix}
      0 \\-\beta_{23}
    \end{bmatrix}
    \begin{bmatrix}
      0 & -\beta_{32}
    \end{bmatrix}
    =
    \begin{bmatrix}
      \alpha_{12} & -\beta_{12}                                                      \\
      -\beta_{21} & \alpha_{21}+\alpha_{23}-\frac{\beta_{23}\beta_{32}}{\alpha_{32}}
    \end{bmatrix}
  \end{equation*}
  Now the semigroup generated by $-D_{\lambda,\Vo}$ is positive if and only if $\lambda\not\in\sigma(-\Delta_{\Vo})$ and
  \begin{equation*}
    \beta_{12}=\dfrac{\sqrt{\lambda}}{\sin\left(\sqrt{\lambda}L_{12}\right)}>0.
  \end{equation*}
  This is the case precisely when either $\lambda\leq 0$ or $\lambda\not\in\sigma(-\Delta_{\Vo})$ and
  \begin{equation*}
    \left(\frac{k\pi}{L_{12}}\right)^2<\lambda<\left(\frac{(k+1)\pi}{L_{12}}\right)^2
  \end{equation*}
  for some $k\geq 0$ even. We plot the spectrum $\sigma(D_{\lambda,\Vo})$ in Figure~\ref{fig:nw-3-2-spectrum}. The vertical lines are asymptotes to the curves of eigenvalues and correspond to the spectrum $\sigma(-\Delta_{\Vo})$. The semigroup is (strongly) positive where the dominant eigenvalue has a positive eigenfunction. We again use dashed lines to indicate where this happens. We observe that the semigroup stays positive across several eigenvalues beyond the first eigenvalue $\lambda_1$ of $-\Delta_{\Vo}$. We also observe that the semigroup can be stable and positive for some intervals of $\lambda>\lambda_1$.

  \begin{figure}[H]
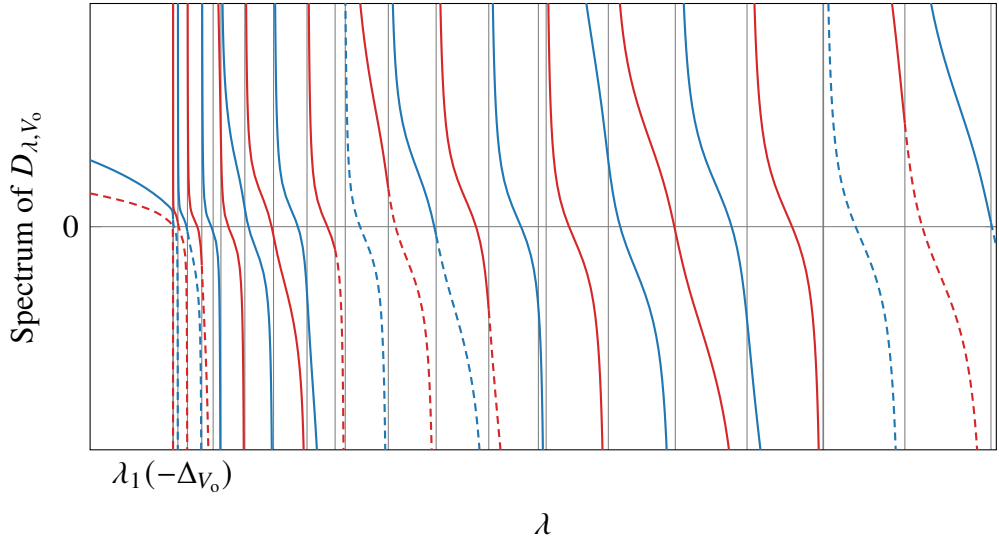

    \centering

    \caption{Spectrum of $D_{\lambda,\Vo}$ corresponding to the graph in Figure~\ref{fig:nw-3-2}.}
    \label{fig:nw-3-2-spectrum}
  \end{figure}
\end{example}

We next present an example to show that the assertion of Theorem~\ref{thm:main-result}\ref{thm:main:evpos} is not valid if $G_{\outerVertex}$ does not contain a cycle.

\begin{example}
  \label{ex:evpos-counterexample}
  Consider the graph with nodes $V=\{v_k\colon k=1,\dots,5\}$ and edges $(1,2)$, $(2,3)$, $(1,4)$, $(2,4)$ and $(4,5)$. We let $\Vo=\{v_1,v_2,v_3\}$. The graph $G$ is shown in Figure~\ref{fig:nw-3-3} on the left, and the corresponding reduced graph $G_{\outerVertex}$ on the right.
  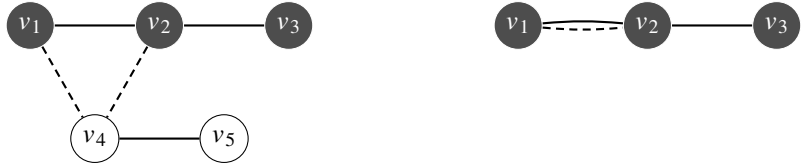
\begin{figure}[ht]
    \centering
    \tikzset{declare function={dh=1.7;dv=1.5;}}
    \begin{tikzpicture}[baseline=(1.center)]
      \node[onode] (1) at (0,0) {$v_1$};%
      \node[onode] (2) at (dh,0) {$v_2$};%
      \node[onode] (3) at (2*dh,0) {$v_3$};%
      \node[inode] (4) at (0.5*dh,-dv) {$v_4$};%
      \node[inode] (5) at (1.5*dh,-dv) {$v_5$};%
      \path (1) edge[iedge] (2);%
      \path (2) edge[iedge] (3);%
      \path (1) edge[cedge] (4);%
      \path (2) edge[cedge] (4);%
      \path (4) edge[iedge] (5);%
    \end{tikzpicture}
    \hfil
    \begin{tikzpicture}[baseline=(1.center)]
      \node[onode] (1) at (0,0) {$v_1$};%
      \node[onode] (2) at (dh,0) {$v_2$};%
      \node[onode] (3) at (2*dh,0) {$v_3$};%
      \path (1) edge[iedge,out=5,in=175] (2);%
      \path (1) edge[cedge,out=-5,in=-175] (2);%
      \path (2) edge[iedge] (3);%
    \end{tikzpicture}
    \caption{Graph $G$ on the left, and the corresponding reduced graph $G_{\outerVertex}$ on the right.}
    \label{fig:nw-3-3}
  \end{figure}

  Clearly, $G_{\outerVertex}$ does not have any cycles; removing any of the edges of $G_{\outerVertex}$ leaves the graph disconnected. We know from Proposition~\ref{prop:properties-for-dtn-operator-with-inner-vertices} that $[D_{\lambda,\Vo}]_{13}=[D_{\lambda,\Vo}]_{31}=0$. Denote entries that are positive by $+$ and entries that are arbitrary by $\ast$. It follows from \cite[Theorem~6.4]{berman:10:spe} (or, more generally, \cite[Theorem~1]{johnson:22:tsp}) that the semigroup is eventually strongly positive if and only if $-D_{\lambda,\Vo}$ has the form
  \begin{equation*}
    -D_{\lambda,\Vo}=
    \begin{bmatrix}
      * & + & 0 \\
      + & * & + \\
      0 & + & *
    \end{bmatrix}.
  \end{equation*}
  Hence, either the semigroup $(e^{-tD_{\lambda,\Vo}})_{t>0}$ is positive or not eventually positive. We also note that in this example the original graph $G$ \emph{does} contain a cycle. 
\end{example}
\begin{figure}[ht]
  \centering
  \begin{tikzpicture}
    \begin{axis}[%
        xmin=-5,%
        xmax=50,%
        ymin=-20,%
        ymax=20,%
        xtick={0.172,3.222,8.375,15.038,22.565,38.022,45.058,0.912,4.938,9.872,17.505,28.928,39.482,2.368,5.645,10.782,19.742,30.845,44.415},%
        ytick={0},%
        extra x ticks={0.172},
        extra x tick labels={$\lambda_1(-\Delta_{\Vo})$},
        xlabel={$\lambda$},%
        ylabel={Spectrum of $D_{\lambda,\Vo}$},%
      ]%
      \begin{scope}[tred]
        \addplot[spectrum] coordinates {%
            (0.170,178.196)(0.180,6.828)(0.190,3.707)(0.210,2.837)(0.220,2.818)(0.230,2.803)(0.240,2.790)(0.250,2.777)(0.260,2.765)(0.270,2.753)(0.690,2.216)(0.700,2.202)(0.710,2.187)(0.720,2.173)(0.730,2.158)(0.740,2.143)(0.750,2.128)(0.760,2.113)(0.770,2.098)(0.780,2.083)(0.790,2.067)(0.800,2.052)(0.810,2.036)(0.820,2.020)(0.830,2.004)(0.840,1.987)(0.850,1.971)(0.860,1.954)(0.870,1.937)(0.880,1.919)(0.890,1.902)(0.900,1.884)(0.910,1.866)(0.920,1.848)(0.930,1.829)(0.940,1.810)(0.950,1.790)(0.960,1.771)(0.970,1.750)(0.980,1.730)(0.990,1.708)(1,1.687)(1.010,1.664)(1.020,1.642)(1.030,1.618)(1.040,1.594)(1.050,1.569)(1.060,1.544)(1.070,1.517)(1.080,1.490)(1.090,1.462)(1.100,1.434)(1.110,1.404)(1.120,1.374)(1.430,0.422)(1.770,-0.539)(2.030,-1.527)(2.280,-2.531)(2.360,-2.850)};%
        \addplot[spectrum] coordinates {%
            (3.290,22.552)(3.300,19.902)(3.310,17.842)(3.320,16.195)(3.330,14.849)(3.340,13.729)(3.360,11.973)(3.380,10.659)(3.400,9.639)(3.430,8.477)(3.470,7.361)(3.530,6.223)(3.620,5.140)(3.760,4.128)(4,3.136)(4.390,2.205)(4.930,1.359)(5.550,0.566)(5.770,0.276)(5.780,0.262)(5.790,0.248)(5.800,0.234)(5.810,0.220)(5.820,0.205)(5.830,0.191)(5.840,0.176)(5.850,0.161)(5.860,0.146)(5.870,0.131)(5.880,0.115)(5.890,0.100)(5.900,0.084)(5.910,0.067)(5.920,0.051)(5.930,0.034)(5.940,0.017)(5.950,-0)(5.960,-0.018)(5.970,-0.036)(5.980,-0.055)(5.990,-0.074)(6,-0.094)(6.010,-0.114)(6.020,-0.135)(6.030,-0.157)(6.040,-0.180)(6.050,-0.204)(6.060,-0.228)(6.070,-0.254)(6.080,-0.281)(6.090,-0.310)(6.100,-0.341)(6.110,-0.373)(6.120,-0.407)(6.130,-0.444)(6.140,-0.483)(6.150,-0.525)(6.160,-0.570)(6.170,-0.619)(6.180,-0.670)(6.320,-1.694)(6.440,-2.711)(6.570,-3.767)(6.700,-4.778)(6.830,-5.771)(6.960,-6.774)(7.090,-7.808)(7.210,-8.812)(7.330,-9.882)(7.440,-10.944)(7.540,-11.998)(7.630,-13.041)(7.710,-14.069)(7.780,-15.070)(7.850,-16.203)(7.910,-17.321)(7.960,-18.400)(8,-19.400)(8.040,-20.573)};%
        \addplot[spectrum] coordinates {%
            (8.530,20.467)(8.540,19.137)(8.550,17.963)(8.560,16.920)(8.580,15.150)(8.600,13.706)(8.620,12.509)(8.640,11.501)(8.670,10.259)(8.700,9.259)(8.740,8.196)(8.790,7.174)(8.860,6.119)(8.960,5.073)(9.110,4.047)(9.340,3.056)(9.680,2.099)(10.120,1.187)(10.620,0.315)(11.150,-0.538)(11.690,-1.396)(12.210,-2.252)(12.710,-3.135)(13.170,-4.029)(13.590,-4.954)(13.950,-5.907)(14.200,-6.893)(14.320,-7.907)(14.390,-9.087)(14.440,-10.353)(14.480,-11.647)(14.510,-12.793)(14.540,-14.105)(14.570,-15.607)(14.590,-16.728)(14.610,-17.959)(14.630,-19.314)(14.650,-20.813)};%
        \addplot[spectrum] coordinates {%
            (15.860,20.454)(15.920,19.367)(15.990,18.245)(16.070,17.116)(16.150,16.115)(16.240,15.108)(16.340,14.103)(16.450,13.101)(16.570,12.094)(16.700,11.069)(16.830,10.075)(16.960,9.074)(17.090,8.028)(17.210,6.987)(17.320,5.947)(17.420,4.934)(17.520,3.900)(17.630,2.822)(17.750,1.811)(17.910,0.767)(18.120,-0.217)(18.410,-1.188)(18.770,-2.133)(19.160,-3.060)(19.540,-3.985)(19.890,-4.927)(20.200,-5.889)(20.470,-6.880)(20.700,-7.889)(20.890,-8.886)(21.050,-9.887)(21.190,-10.926)(21.310,-11.981)(21.410,-13.012)(21.500,-14.094)(21.580,-15.211)(21.650,-16.342)(21.710,-17.453)(21.760,-18.502)(21.810,-19.686)(21.850,-20.751)};%
        \addplot[spectrum] coordinates {%
            (23.550,20.528)(23.650,19.460)(23.770,18.386)(23.910,17.330)(24.070,16.305)(24.250,15.313)(24.460,14.307)(24.700,13.302)(24.970,12.304)(25.270,11.317)(25.600,10.343)(25.960,9.380)(26.340,8.452)(26.750,7.530)(27.190,6.612)(27.650,5.715)(28.130,4.832)(28.630,3.950)(29.120,3.064)(29.260,2.775)(29.270,2.752)(29.280,2.729)(29.290,2.706)(29.300,2.683)(29.310,2.659)(29.320,2.635)(29.330,2.610)(29.340,2.585)(29.350,2.560)(29.360,2.533)(29.370,2.507)(29.380,2.479)(29.390,2.451)(29.400,2.422)(29.410,2.392)(29.420,2.361)(29.430,2.330)(29.440,2.297)(29.450,2.263)(29.460,2.228)(29.470,2.191)(29.480,2.153)(29.490,2.114)(29.500,2.072)(29.510,2.029)(29.520,1.983)(29.530,1.935)(29.670,0.869)(29.750,-0.148)(29.820,-1.162)(29.890,-2.180)(29.970,-3.284)(30.050,-4.307)(30.140,-5.361)(30.240,-6.424)(30.350,-7.478)(30.470,-8.516)(30.600,-9.533)(30.740,-10.531)(30.900,-11.579)(31.070,-12.613)(31.250,-13.645)(31.430,-14.635)(31.620,-15.652)(31.810,-16.655)(32,-17.656)(32.190,-18.664)(32.380,-19.687)(32.560,-20.678)};%
        \addplot[spectrum] coordinates {%
            (38.160,23.497)(38.170,13.844)(38.180,5.957)(38.190,3.228)(38.200,3.081)(38.210,3.035)(38.220,3.008)(38.230,2.987)(38.240,2.970)(38.250,2.954)(38.260,2.940)(38.270,2.926)(38.280,2.913)(38.290,2.900)(38.300,2.887)(38.310,2.874)(38.320,2.862)(38.330,2.850)(38.340,2.838)(38.350,2.826)(39.020,2.072)(39.710,1.333)(40.400,0.607)(41.090,-0.124)(41.770,-0.865)(42.430,-1.621)(43.070,-2.410)(43.670,-3.224)(44.220,-4.078)(44.320,-4.257)(44.330,-4.276)(44.340,-4.295)(44.350,-4.315)(44.360,-4.334)(44.370,-4.354)(44.380,-4.374)(44.390,-4.395)(44.400,-4.416)(44.410,-4.437)(44.420,-4.459)(44.430,-4.481)(44.440,-4.504)(44.450,-4.527)(44.460,-4.551)(44.470,-4.576)(44.480,-4.602)(44.490,-4.629)(44.500,-4.658)(44.510,-4.688)(44.520,-4.719)(44.530,-4.753)(44.540,-4.790)(44.550,-4.830)(44.560,-4.874)(44.570,-4.924)(44.580,-4.981)(44.590,-5.048)(44.600,-5.127)(44.660,-6.434)(44.680,-7.613)(44.700,-9.252)(44.720,-11.271)(44.730,-12.413)(44.740,-13.645)(44.750,-14.972)(44.760,-16.401)(44.770,-17.943)(44.780,-19.606)(44.790,-21.407)};%
      \end{scope}
      \begin{scope}[tred]
        \addplot[spectrumpos] coordinates {%
            (-5,2.236)(-4.020,2.005)(-3.050,1.746)(-2.090,1.443)(-1.160,1.065)(-1.050,1.011)(-1.040,1.005)(-1.030,1)(-1.020,0.995)(-1.010,0.990)(-1,0.985)(-0.990,0.979)(-0.980,0.974)(-0.970,0.969)(-0.960,0.963)(-0.950,0.958)(-0.940,0.952)(-0.930,0.947)(-0.920,0.941)(-0.910,0.936)(-0.900,0.930)(-0.890,0.924)(-0.880,0.919)(-0.870,0.913)(-0.860,0.907)(-0.850,0.901)(-0.840,0.896)(-0.830,0.890)(-0.820,0.884)(-0.810,0.878)(-0.800,0.872)(-0.790,0.866)(-0.780,0.860)(-0.770,0.854)(-0.760,0.847)(-0.750,0.841)(-0.740,0.835)(-0.730,0.829)(-0.720,0.822)(-0.710,0.816)(-0.700,0.809)(-0.690,0.803)(-0.680,0.796)(-0.670,0.789)(-0.660,0.783)(-0.650,0.776)(-0.640,0.769)(-0.630,0.762)(-0.620,0.755)(-0.610,0.748)(-0.600,0.741)(-0.590,0.734)(-0.580,0.727)(-0.570,0.719)(-0.560,0.712)(-0.550,0.705)(-0.540,0.697)(-0.530,0.689)(-0.520,0.682)(-0.510,0.674)(-0.500,0.666)(-0.490,0.658)(-0.480,0.650)(-0.470,0.642)(-0.460,0.634)(-0.450,0.625)(-0.440,0.617)(-0.430,0.608)(-0.420,0.599)(-0.410,0.591)(-0.400,0.582)(-0.390,0.573)(-0.380,0.564)(-0.370,0.554)(-0.360,0.545)(-0.350,0.535)(-0.340,0.526)(-0.330,0.516)(-0.320,0.506)(-0.310,0.495)(-0.300,0.485)(-0.290,0.475)(-0.280,0.464)(-0.270,0.453)(-0.260,0.442)(-0.250,0.430)(-0.240,0.419)(-0.230,0.407)(-0.220,0.395)(-0.210,0.382)(-0.200,0.370)(-0.190,0.357)(-0.180,0.344)(-0.170,0.330)(-0.160,0.316)(-0.150,0.302)(-0.140,0.287)(-0.130,0.272)(-0.120,0.256)(-0.110,0.240)(-0.100,0.223)(-0.090,0.205)(-0.080,0.187)(-0.070,0.168)(-0.060,0.148)(-0.050,0.127)(-0.040,0.104)(-0.030,0.081)(-0.020,0.056)(-0.010,0.029)(0,-0)(0.010,-0.031)(0.020,-0.066)(0.030,-0.104)(0.040,-0.146)(0.050,-0.194)(0.060,-0.250)(0.070,-0.315)(0.080,-0.392)(0.090,-0.488)(0.140,-1.889)(0.150,-3.027)(0.160,-20)};%
        \addplot[spectrumpos] coordinates {%
            (2.370,-2.890)(2.600,-3.885)(2.780,-4.892)(2.900,-5.924)(2.970,-6.943)(3.020,-8.169)(3.050,-9.317)(3.070,-10.381)(3.090,-11.818)(3.110,-13.834)(3.120,-15.169)(3.130,-16.819)(3.140,-18.902)(3.150,-21.606)};%
      \end{scope}
      \begin{scope}[tblue]
        \addplot[spectrum] coordinates {%
            (-5,4.425)(-4.100,3.988)(-3.220,3.508)(-2.370,2.979)(-1.560,2.392)(-0.800,1.738)(-0.220,1.150)(-0.210,1.139)(-0.200,1.127)(-0.190,1.116)(-0.180,1.104)(-0.170,1.093)(-0.160,1.081)(-0.150,1.069)(-0.140,1.057)(-0.130,1.045)(-0.120,1.033)(-0.110,1.021)(-0.100,1.008)(-0.090,0.996)(-0.080,0.983)(-0.070,0.970)(-0.060,0.957)(-0.050,0.943)(-0.040,0.930)(-0.030,0.916)(-0.020,0.901)(-0.010,0.887)(0,0.872)(0.010,0.856)(0.020,0.840)(0.030,0.824)(0.040,0.807)(0.050,0.789)(0.060,0.770)(0.070,0.751)(0.080,0.730)(0.090,0.708)(0.100,0.685)(0.110,0.661)(0.120,0.634)(0.130,0.606)(0.140,0.576)(0.150,0.544)(0.160,0.510)(0.170,0.474)(0.190,0.398)};%
        \addplot[spectrum] coordinates {%
            (0.930,24.557)(0.940,16.512)(0.950,12.460)(0.960,10.018)(0.970,8.386)(0.980,7.217)(1,5.657)(1.030,4.292)(1.070,3.275)(1.160,2.222)(1.170,2.155)(1.180,2.093)(1.190,2.037)(1.200,1.984)(1.210,1.936)(1.220,1.891)(1.230,1.849)(1.240,1.810)(1.250,1.773)(1.260,1.738)(1.270,1.704)(1.280,1.672)(1.290,1.642)(1.300,1.612)(1.310,1.584)(1.320,1.556)(1.330,1.530)(1.340,1.503)(1.350,1.478)(1.360,1.453)(1.370,1.429)(1.380,1.405)(1.390,1.381)(1.830,0.479)(2.320,-0.414)(2.460,-0.709)(2.470,-0.732)(2.480,-0.755)(2.490,-0.779)(2.500,-0.803)(2.510,-0.828)(2.520,-0.852)(2.530,-0.878)(2.540,-0.903)(2.550,-0.929)(2.560,-0.956)(2.570,-0.983)(2.580,-1.011)(2.590,-1.039)(2.600,-1.067)(2.610,-1.097)(2.620,-1.126)(2.860,-2.140)(3,-3.220)(3.100,-4.229)(3.210,-5.280)};%
        \addplot[spectrum] coordinates {%
            (5.540,21.094)(5.570,19.729)(5.600,18.411)(5.630,17.116)(5.660,15.822)(5.690,14.511)(5.720,13.176)(5.750,11.827)(5.780,10.493)(5.810,9.213)(5.840,8.023)(5.870,6.939)(5.910,5.667)(5.950,4.577)(6,3.432)(6.060,2.317)(6.130,1.311)(6.230,0.399)(6.240,0.339)(6.250,0.284)(6.260,0.234)(6.270,0.188)(6.280,0.144)(6.290,0.104)(6.300,0.067)(6.310,0.032)(6.320,-0.001)(6.330,-0.032)(6.340,-0.061)(6.350,-0.089)(6.360,-0.116)(6.370,-0.141)(6.380,-0.166)(6.390,-0.189)(6.400,-0.212)(6.410,-0.235)(6.420,-0.257)(6.430,-0.278)(6.440,-0.299)(6.450,-0.319)(6.460,-0.339)(6.470,-0.359)(6.480,-0.378)(6.490,-0.397)(6.500,-0.416)(6.980,-1.298)(7.360,-2.228)(7.620,-3.194)(7.800,-4.216)(7.920,-5.218)(8.010,-6.267)(8.080,-7.361)(8.140,-8.576)(8.190,-9.838)(8.230,-11.028)(8.270,-12.365)(8.300,-13.437)(8.330,-14.537)(8.360,-15.638)(8.390,-16.718)(8.420,-17.767)(8.450,-18.785)(8.490,-20.102)};%
        \addplot[spectrum] coordinates {%
            (11.130,20.099)(11.160,18.687)(11.190,17.455)(11.220,16.373)(11.260,15.121)(11.300,14.046)(11.350,12.900)(11.410,11.748)(11.480,10.637)(11.560,9.595)(11.660,8.541)(11.780,7.531)(11.930,6.538)(12.130,5.516)(12.380,4.541)(12.700,3.577)(13.080,2.641)(13.470,1.710)(13.770,0.722)(13.950,-0.311)(14.070,-1.386)(14.160,-2.451)(14.240,-3.566)(14.310,-4.588)(14.390,-5.590)(14.530,-6.632)(14.760,-7.624)(15.020,-8.607)(15.270,-9.601)(15.500,-10.610)(15.710,-11.642)(15.890,-12.636)(16.050,-13.630)(16.200,-14.683)(16.330,-15.718)(16.450,-16.806)(16.550,-17.840)(16.640,-18.901)(16.720,-19.982)(16.790,-21.068)};%
        \addplot[spectrum] coordinates {%
            (17.860,21.186)(17.890,19.858)(17.920,18.739)(17.960,17.497)(18,16.471)(18.050,15.414)(18.110,14.386)(18.190,13.300)(18.290,12.247)(18.420,11.198)(18.580,10.209)(18.790,9.211)(19.060,8.215)(19.390,7.249)(19.780,6.317)(20.230,5.413)(20.730,4.547)(21.280,3.704)(21.870,2.884)(22.480,2.088)(22.560,1.986)};%
        \addplot[spectrum] coordinates {%
            (24.170,-0.377)(24.520,-1.316)(24.820,-2.298)(25.110,-3.273)(25.400,-4.238)(25.690,-5.199)(25.980,-6.175)(26.260,-7.153)(26.520,-8.120)(26.770,-9.133)(26.990,-10.125)(27.190,-11.146)(27.360,-12.140)(27.510,-13.152)(27.640,-14.170)(27.750,-15.167)(27.850,-16.218)(27.940,-17.316)(28.020,-18.450)(28.090,-19.599)(28.150,-20.732)};%
        \addplot[spectrum] coordinates {%
            (29.280,21.144)(29.300,19.421)(29.320,17.866)(29.340,16.456)(29.360,15.171)(29.380,13.996)(29.400,12.916)(29.430,11.453)(29.460,10.153)(29.490,8.995)(29.520,7.962)(29.560,6.762)(29.600,5.754)(29.650,4.754)(29.730,3.701)(29.820,3.063)(29.830,3.013)(29.840,2.965)(29.850,2.921)(29.860,2.878)(29.870,2.837)(29.880,2.799)(29.890,2.762)(29.900,2.726)(29.910,2.692)(29.920,2.659)(29.930,2.628)(29.940,2.597)(29.950,2.568)(29.960,2.539)(29.970,2.512)(29.980,2.485)(29.990,2.458)(30,2.433)(30.010,2.408)(30.020,2.383)(30.030,2.360)(30.040,2.336)(30.050,2.313)(30.060,2.291)(30.070,2.269)(30.080,2.247)(30.090,2.226)(30.100,2.205)(30.110,2.184)(30.120,2.164)(30.130,2.144)(30.140,2.124)(30.150,2.105)(30.160,2.085)(30.170,2.066)(30.180,2.047)(30.190,2.029)(30.730,1.175)(31.330,0.358)(31.950,-0.444)(32.570,-1.235)(33.190,-2.037)(33.790,-2.842)(34.370,-3.664)(34.930,-4.513)(35.460,-5.384)(35.950,-6.261)(36.410,-7.164)(36.830,-8.071)(37.220,-9.005)(37.570,-9.945)(37.880,-10.933)(38.010,-11.491)};%
        \addplot[spectrum] coordinates {%
            (38.190,-17.123)(38.200,-20.305)};%
        \addplot[spectrum] coordinates {%
            (43.910,20.482)(43.970,19.377)(44.030,18.225)(44.080,17.219)(44.130,16.164)(44.180,15.047)(44.230,13.855)(44.270,12.837)(44.310,11.750)(44.350,10.580)(44.390,9.312)(44.420,8.285)(44.450,7.181)(44.480,5.989)(44.510,4.695)(44.540,3.284)(44.560,2.274)(44.580,1.204)(44.600,0.083)(44.620,-1.068)(44.640,-2.182)(44.670,-3.470)(44.720,-4.322)(44.730,-4.401)(44.740,-4.467)(44.750,-4.524)(44.760,-4.574)(44.770,-4.619)(44.780,-4.660)(44.790,-4.697)(44.800,-4.732)(44.810,-4.765)(44.820,-4.796)(44.830,-4.826)(44.840,-4.854)(44.850,-4.881)(44.860,-4.908)(44.870,-4.934)(44.880,-4.959)(44.890,-4.984)(44.900,-5.008)(44.910,-5.031)(45.340,-5.948)(45.750,-6.865)(46.130,-7.819)(46.470,-8.789)(46.770,-9.761)(47.040,-10.752)(47.280,-11.747)(47.490,-12.726)(47.680,-13.717)(47.850,-14.704)(48.010,-15.737)(48.150,-16.737)(48.280,-17.762)(48.400,-18.803)(48.510,-19.850)(48.610,-20.891)};%
      \end{scope}
      \begin{scope}[tblue]
        \addplot[spectrumpos] coordinates {%
            (0.200,0.358)(0.490,-0.623)(0.670,-1.682)(0.750,-2.781)(0.790,-3.844)(0.820,-5.245)(0.840,-6.838)(0.850,-8.031)(0.860,-9.701)(0.870,-12.209)(0.880,-16.401)(0.890,-24.841)};%
        \addplot[spectrumpos] coordinates {%
            (3.220,-5.368)(3.340,-6.366)(3.470,-7.429)(3.590,-8.487)(3.700,-9.577)(3.790,-10.590)(3.870,-11.612)(3.940,-12.624)(4.010,-13.773)(4.070,-14.895)(4.120,-15.946)(4.170,-17.127)(4.210,-18.183)(4.250,-19.356)(4.290,-20.669)};%
        \addplot[spectrumpos] coordinates {%
            (22.570,1.973)(23.190,1.174)(23.770,0.357)(24.160,-0.355)};%
        \addplot[spectrumpos] coordinates {%
            (38.020,-11.545)(38.130,-12.597)(38.170,-14.180)(38.190,-17.123)};%
      \end{scope}
      \begin{scope}[torange]
        \addplot[spectrum] coordinates {%
            (-5,6.875)(-4.150,6.341)(-3.310,5.779)(-2.490,5.192)(-1.690,4.577)(-0.920,3.930)(-0.190,3.244)(0.090,2.949)(0.100,2.938)(0.110,2.926)(0.120,2.915)(0.130,2.903)(0.140,2.892)(0.150,2.880)(0.160,2.867)(0.170,2.854)(0.180,2.837)(0.190,2.807)(0.220,1.751)(0.250,1.289)(0.260,1.205)(0.270,1.136)(0.280,1.080)(0.290,1.032)(0.300,0.991)(0.310,0.955)(0.320,0.923)(0.330,0.894)(0.340,0.867)(0.350,0.843)(0.360,0.820)(0.370,0.798)(0.380,0.778)(0.390,0.759)(0.400,0.740)(0.410,0.722)(0.420,0.705)(0.430,0.688)(0.440,0.672)(0.450,0.656)(0.460,0.641)(0.470,0.625)(0.480,0.610)(0.490,0.596)(0.500,0.581)(0.510,0.567)(0.520,0.552)(0.770,0.197)(0.780,0.182)(0.790,0.166)(0.800,0.151)(0.810,0.136)(0.820,0.120)(0.830,0.104)(0.840,0.088)(0.850,0.072)(0.860,0.056)(0.870,0.039)(0.880,0.023)(0.890,0.006)(0.900,-0.011)(0.910,-0.028)(0.920,-0.045)(0.930,-0.062)(0.940,-0.080)(0.950,-0.098)(0.960,-0.116)(0.970,-0.134)(1.280,-0.784)};%
        \addplot[spectrum] coordinates {%
            (1.690,-2.001)(1.880,-3.015)(1.990,-4.075)(2.060,-5.180)(2.110,-6.361)(2.150,-7.713)(2.180,-9.116)(2.200,-10.336)(2.220,-11.894)(2.240,-13.951)(2.250,-15.246)(2.260,-16.788)(2.270,-18.654)(2.280,-20.958)};%
        \addplot[spectrum] coordinates {%
            (2.460,20.421)(2.470,18.537)(2.480,16.978)(2.490,15.668)(2.500,14.549)(2.520,12.740)(2.540,11.337)(2.560,10.216)(2.590,8.897)(2.620,7.874)(2.660,6.815)(2.710,5.805)(2.780,4.744)(2.870,3.717)(2.980,2.721)(3.110,1.725)(3.270,0.713)(3.490,-0.274)(3.790,-1.239)(4.110,-2.210)(4.390,-3.203)(4.610,-4.193)(4.780,-5.186)(4.920,-6.257)(5.020,-7.253)(5.100,-8.269)(5.170,-9.404)(5.220,-10.421)(5.270,-11.692)(5.310,-12.968)(5.340,-14.138)(5.370,-15.559)(5.390,-16.692)(5.410,-18.018)(5.430,-19.594)(5.450,-21.498)};%
        \addplot[spectrum] coordinates {%
            (5.880,20.617)(5.900,19.614)(5.930,18.373)(5.960,17.357)(6,16.242)(6.050,15.109)(6.110,14.002)(6.180,12.941)(6.260,11.934)(6.360,10.888)(6.480,9.847)(6.620,8.841)(6.790,7.829)(6.990,6.840)(7.230,5.848)(7.510,4.860)(7.820,3.880)(8.130,2.917)(8.410,1.955)(8.660,0.953)(8.890,-0.058)(9.110,-1.050)(9.320,-2.044)(9.510,-3.052)(9.670,-4.058)(9.800,-5.052)(9.910,-6.081)(10,-7.110)(10.080,-8.226)(10.140,-9.233)(10.200,-10.438)(10.250,-11.641)(10.290,-12.775)(10.330,-14.106)(10.360,-15.267)(10.390,-16.604)(10.410,-17.615)(10.430,-18.741)(10.450,-20.002)};%
        \addplot[spectrum] coordinates {%
            (11.930,20.073)(12.020,19.064)(12.120,18.016)(12.230,16.939)(12.340,15.927)(12.460,14.885)(12.590,13.815)(12.720,12.791)(12.860,11.727)(13,10.688)(13.140,9.661)(13.280,8.633)(13.420,7.595)(13.560,6.539)(13.700,5.473)(13.840,4.429)(13.990,3.428)(14.190,2.441)(14.520,1.492)(15.010,0.616)(15.570,-0.233)(16.090,-1.090)(16.520,-2.020)(16.820,-2.997)(17.020,-4.023)(17.150,-5.028)(17.250,-6.122)(17.320,-7.129)(17.380,-8.189)(17.430,-9.223)(17.480,-10.387)(17.520,-11.396)(17.560,-12.453)(17.600,-13.538)(17.640,-14.631)(17.680,-15.720)(17.720,-16.795)(17.760,-17.852)(17.800,-18.893)(17.840,-19.919)(17.880,-20.934)};%
        \addplot[spectrum] coordinates {%
            (22.380,20.199)(22.470,19.101)(22.560,17.986)(22.640,16.968)(22.720,15.918)(22.800,14.832)(22.880,13.715)(22.960,12.580)(23.040,11.449)(23.120,10.345)(23.200,9.291)(23.290,8.182)(23.380,7.164)(23.480,6.142)(23.590,5.139)(23.720,4.102)(23.870,3.077)(24.050,2.053)(24.270,1.076)(24.590,0.120)(25.050,-0.769)(25.570,-1.643)(26.060,-2.515)(26.510,-3.422)(26.910,-4.358)(27.260,-5.313)(27.570,-6.292)(27.840,-7.268)(28.080,-8.249)(28.300,-9.255)(28.500,-10.273)(28.680,-11.294)(28.840,-12.307)(28.920,-12.861)};%
        \addplot[spectrum] coordinates {%
            (32.460,20.217)(32.560,19.137)(32.670,18.076)(32.790,17.047)(32.930,15.984)(33.080,14.980)(33.250,13.980)(33.450,12.952)(33.670,11.968)(33.920,10.998)(34.210,10.024)(34.540,9.068)(34.920,8.118)(35.340,7.210)(35.810,6.324)(36.330,5.464)(36.900,4.628)(37.510,3.820)(38.040,3.166)(38.050,3.154)(38.060,3.141)(38.070,3.128)(38.080,3.115)(38.090,3.102)(38.100,3.088)(38.110,3.074)(38.120,3.059)(38.130,3.043)(38.140,3.025)(38.150,3.003)(38.160,2.972)(38.170,2.915)(38.190,-1.007)(38.200,-5.167)(38.210,-7.557)(38.220,-8.846)(38.240,-10.068)(38.290,-11.118)(38.440,-12.134)(38.670,-13.114)(38.910,-14.100)(39.140,-15.096)(39.360,-16.120)(39.470,-16.663)};%
        \addplot[spectrum] coordinates {%
            (46.400,20.969)(46.520,19.929)(46.650,18.916)(46.800,17.866)(46.960,16.857)(47.140,15.835)(47.340,14.811)(47.560,13.795)(47.800,12.792)(48.060,11.809)(48.340,10.848)(48.650,9.880)(48.990,8.914)(49.360,7.956)(50, 6.480)};%
      \end{scope}
      \begin{scope}[torange]
        \addplot[spectrumpos] coordinates {%
            (1.400,-1.085)(1.630,-1.776)(1.680,-1.962)};%
        \addplot[spectrumpos] coordinates {%
            (28.930,-12.932)(29.060,-13.925)(29.180,-14.960)(29.290,-16.040)(29.380,-17.043)(29.460,-18.047)(29.540,-19.182)(29.610,-20.305)};%
        \addplot[spectrumpos] coordinates {%
            (39.480,-16.713)(39.670,-17.712)(39.850,-18.734)(40.020,-19.774)(40.180,-20.831)};%
      \end{scope}
    \end{axis}
  \end{tikzpicture}
  \caption{Spectrum of the graph in Figure~\ref{fig:nw-3-3}. Dashed parts correspond to a positive Dirichlet-to-Neumann semigroup.}
  \label{fig:nw-3-3-spectrum}
\end{figure}
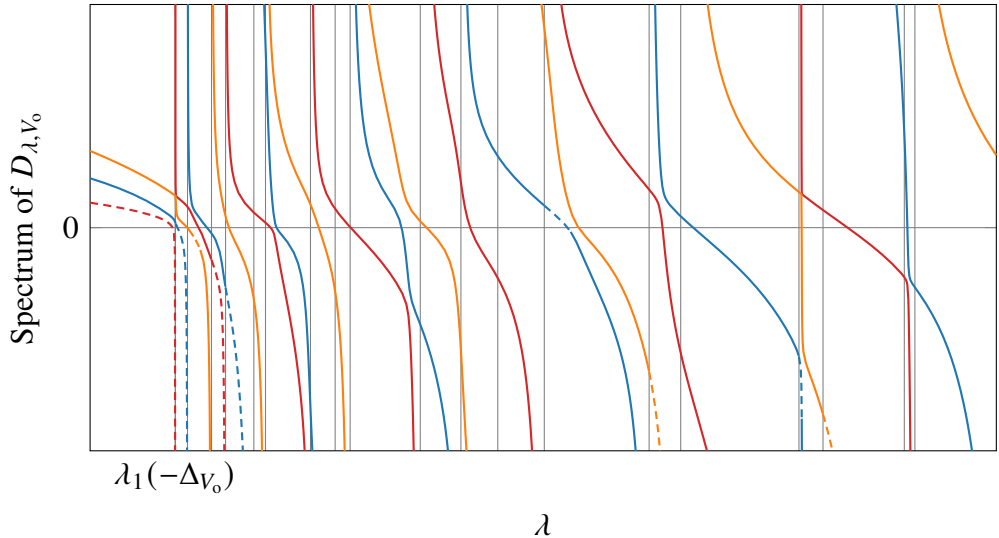

\begin{example}
  \label{ex:nw-4-3}
  We can also get an idea of the spectrum of the graph in Figure~\ref{fig:nw-4-3} with $L_{14}=1$, $L_{24}=\sqrt{3}$, $L_{34}=\sqrt{5}$ and $L_{23}=\sqrt{7}$. The reduced graph is a cycle, so satisfies the assumptions of Theorem~\ref{thm:main-result}\ref{thm:main:evpos}. The spectrum is plotted in Figure~\ref{fig:nw-4-3-spectrum}. The dashed lines indicate where the semigroup generated by $-D_{\lambda,\Vo}$ is strongly positive, the dotted ones where it is eventually strongly positive but not positive. Since the reduced graph is complete, all entries of the matrix representation of $D_{\lambda,\Vo}$ will, in general, be nonzero. For a derivation of this matrix, see Example~\ref{ex:nw-4-3-matrix}.
  \begin{figure}[ht]
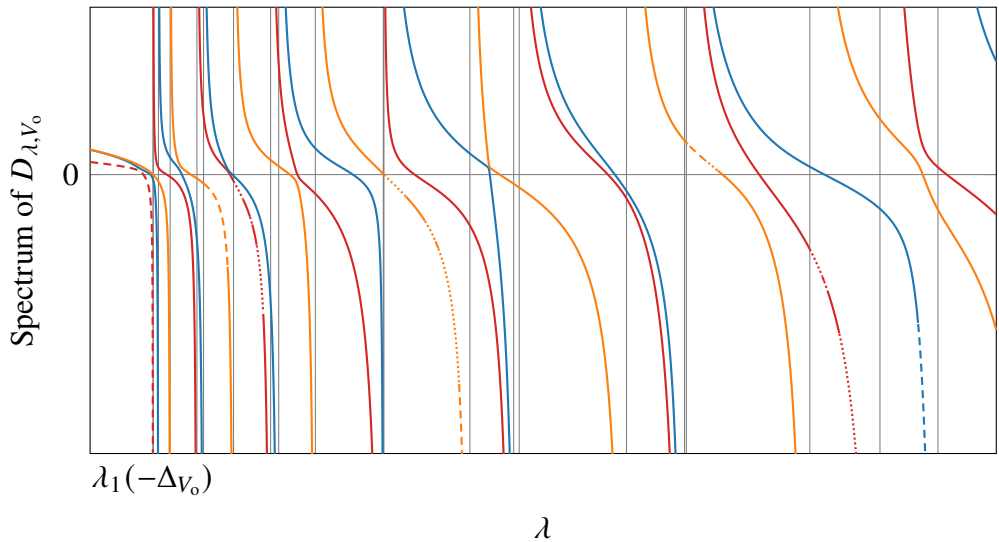

    \centering

    \caption{Spectrum of $D_{\lambda,\Vo}$ corresponding to the graph in Figure~\ref{fig:nw-4-3}.}
    \label{fig:nw-4-3-spectrum}
  \end{figure}
\end{example}

\begin{example}
  \label{ex:star-graph}
  We consider a star graph with vertices $\Vo=\{v_1,v_2,v_3,v_4,v_5\}$ connecting to the center $\Vi=\{v_6\}$. As a special kind of tree, removing any of the edges disconnects it. However, the reduced graph is a complete graph. It stays connected by removing any of the edges, see Figure~\ref{fig:nw-6-1-graph}. As expected, we observe regions of $\lambda>\lambda_1(-\Delta_{\Vo})$ where the Dirichlet-to-Neumann semigroup is positive, or eventually positive without being positive. See Figure~\ref{fig:nw-6-1-spectrum}.
  \begin{figure}[ht]
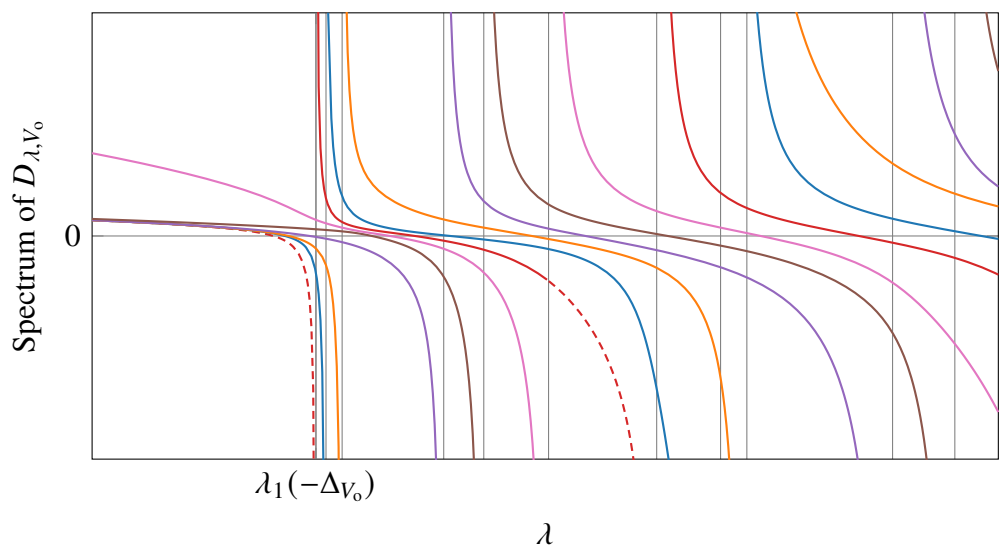

    \centering
    \tikzset{declare function={d=1.5;}}

    \caption{Spectrum for graph in Figure~\ref{fig:nw-6-1-graph}, but with $\Vo=V$.}
    \label{fig:nw-6-0-spectrum}
  \end{figure}
\end{example}

\clearpage

\section{The Dirichlet-to-Neumann operator}
\label{sec:dtn-operator}
The purpose of this section is to give a careful construction of the Dirichlet-to-Neumann operator and other mostly well known background material. For background on quantum graphs we refer to \cite{berkolaiko:13:iqg,mugnolo:14:sme,kurasov:24:sgg} or \cite{mugnolo:21:wmg}. A discussion of Dirichlet-to-Neumann-type operators appears in several of the main monographs on differential equations on graphs, see for instance \cite[Section~3.5]{berkolaiko:13:iqg} or \cite[Section~6.6.1]{mugnolo:14:sme}. We nevertheless give a complete construction including the features that are relevant for our discussion on positivity properties, in particular the proof of Theorem~\ref{thm:main-result}\ref{thm:main:basic}. We keep using the assumptions on the graph $G=(E,V)$ from the introduction.

\subsection{Function spaces on graphs}
In order to do analysis on the edges of the graph $G$ we identify each edge $e \in E$ with the real interval $[0,L_e]$, where the left vertex $\el$ is identified with the end point $0$ and the right vertex $\er$ is identified with the end point $L_e$ of the interval. We define the \emph{$L^2$-space} on the graph $G$ by
\begin{equation*}
  L^2(G) := \bigoplus_{e \in E} L^2\bigl([0,L_e]\bigr)
  \quad
  \text{with norm}
  \quad
  \|(f_e)\|_2 := \left(\sum_{e\in E} \|f_e\|_2^2\right)^{1/2},
\end{equation*}
which clearly makes it into a Hilbert space. Similarly, following \cite{berkolaiko:13:iqg}, for $k\geq 1$ we define the \emph{Sobolev spaces} $\tilde H^k(G)$ on the graph $G$ by
\begin{equation*}
  \tilde H^k(G) := \bigoplus_{e \in E} H^k\bigl([0,L_e]\bigr)
  \quad
  \text{with norm}
  \quad
  \|(f_e)\|_{H^k} := \left(\sum_{e\in E} \|f_e\|_{H^k}^2\right)^{1/2},
\end{equation*}
which are also Hilbert spaces. We furthermore define the set of continuous functions on $G$ to be the set of continuous functions on the edges continuous across the vertices, that is,
\begin{equation*}
  C(G):=\left\{(f_e)_{e\in E}\in\bigoplus_{e\in E}C\left([0,L_e]\right)\colon
  f_e(v)=f_{\tilde e}(v)\text{ if }v\in V\text{ meets }e,\tilde e\in E\right\}
\end{equation*}
with norm
\begin{equation*}
  \|f\|_{C(G)}:=\max_{e\in E}\left\{\sum_{e\in E} \|f_e\|_{C([0,L_e])}\right\}.
\end{equation*}
Then $C(G)$ is a Banach space. By standard Sobolev embedding theorems $f_e\in C^{k-1}([0,L_e])$ for all $f=(f_e)\in\tilde H^k(G)$ and $k\geq 1$, see for instance \cite[Theorem~8.8]{brezis:11:fa}. Thus it makes sense to define
\begin{equation*}
  H^1(G):=C(G)\cap \tilde H^1(G).
\end{equation*}

\subsection{Homogeneous boundary value problems on graphs}
We can now introduce the Laplace operator on a graph with its maximal domain.

\begin{definition}[Maximal Laplace operator]
  The \emph{maximal Laplace operator}
  \begin{equation*}
    \Delta_{\max}\colon  D(\Delta_{\max})\subseteq L^2(G) \to L^2(G)
  \end{equation*}
  on $G$ is given by
  \begin{equation*}
    D(\Delta_{\max}) := C(G)\cap \tilde H^2(G),\quad
    \Delta_{\max}(f_e)_{e\in E} := (f_e'')_{e\in E},
  \end{equation*}
  for all $f=(f_e)_{e\in E}\in D(\Delta_{\max})$, where $f''$ denotes the second weak derivative of a function $f$.
\end{definition}

We next give a precise definition of the trace and the outer normal derivative of a function $f$. Let us begin with the trace.

\begin{definition}[Trace map]
  \label{def:trace}
  The \emph{trace map} $\trace\colon H^1(G) \to \bbR^V$ is defined by
  \begin{align*}
    (\trace f)(v) :=
    \begin{cases}
      f_e(0)   & \text{if } v = \el \\
      f_e(L_e) & \text{if } v = \er
    \end{cases}
  \end{align*}
  for all $f = (f_e) \in H^1(G)$ and all $v \in V$.
\end{definition}

Note that $\trace$ is well-defined since, as $G$ is connected, every vertex is connected to at least one edge, and since $\trace(f)(v)$ does not depend on $e$ since $H^1 (G)$ embeds into $C(G)$. Next, we consider outer normal derivatives of functions $f=(f_e)_{e\in E} \in \tilde H^2(G)$. If $f_e \in H^2(0,L_e)$, then the outer normal derivative of $f_e$ in $\el$ is defined as $-f_e'(0)$ and the outer normal derivative of $f$ in $\er$ is defined as $f_e'(L_e)$. This makes sense since $H^2\bigl((0,L_e)\bigr)\subseteq C^1\bigl([0,L_e]\bigr)$. To obtain the outer normal derivative of $f = (f_e)$ in a vertex $v$ we sum up all the outer normal derivatives in $v$ of all the functions $f_e$ for which $e$ has $v$ as an endpoint. More precisely, we make the following definition.
\begin{definition}[Outer normal derivative]
  The map $\nu\colon D(\Delta_{\max}) \to \bbR^V$ given by
  \begin{equation*}
    (\nu f)(v) := -\sum_{\el = v}f_e'(0) + \sum_{\er = v} f_e'(L_e)
  \end{equation*}
  for all $f = (f_e) \in D(\Delta_{\max})$ and all $v \in V$ is called the \emph{outer normal derivative mapping}. For each $f \in D(\Delta_{\max}$ and each $v\in V$ the number $(\nu f)(v)$ is called the \emph{outer normal derivative of $f$ at $v$}.
\end{definition}
This definition may readily be checked to be independent of the orientation of the edges. To be able to discuss the various boundary value problems we introduce the sesquilinear form
\begin{equation}
  \label{eq:a-form}
  a\colon H^1(G) \times H^1(G) \to \bbC, \quad
  a(f,g) = \sum_{e \in E} \int_0^{L_e} f_e' \overline{g_e'} \, dx.
\end{equation}
If $f\in\tilde H^2(G)$ and $g\in H^1(G)$, then an integration by parts yields
\begin{equation}
  \label{eq:a-form-integrated-by-parts}
  a(f,g)=-\sum_{e\in E}\int_0^{L_e}f_e''(x)\overline{g_e(x)}\,dx+\sum_{v\in V}\overline{g(v)}(\nu f)(v).
\end{equation}
We also introduce the space
\begin{equation}
  \label{eq:HVo}
  H^1_\Vo(G) := \{f \in C(G)\cap H^1(G)\colon (\trace f)|_\Vo = 0\}.
\end{equation}

Let now $\Vo \subseteq V$ be the non-empty subset of outer vertices and $\Vi$ its complementary set of inner vertices of $G$ as introduced in Section~\ref{sec:introduction}. With that we can give a proper definition of the Laplace operator $\Delta_{\Vo}$ with boundary conditions.
\begin{definition}[Laplace operator with boundary conditions]
  \label{def:laplace-Vo}
  The Laplace operator $\Delta_\Vo\colon D(\Delta_\Vo)\subseteq L^2(G)\to L^2(G)$ with Dirichlet boundary conditions on $\Vo$ and Neumann--Kirchhoff conditions on $\Vi$ is defined by
  \begin{align*}
    D(\Delta_\Vo) & := \{f \in D(\Delta_{\max})\colon  (\gamma f)|_\Vo = 0 \text{ and } (\nu f)|_\Vi = 0\}, \\
    \Delta_\Vo f  & := \Delta_{\max} f
  \end{align*}
  for all $f\in D(\Delta_{\Vo})$.
\end{definition}
Then $\Delta_{\Vo}$ has the following properties.
\begin{proposition}
  \label{prop:simple-properties-of-laplacian}
  The operator $\Delta_\Vo$ on $L^2(G)$ from Definition~\ref{def:laplace-Vo} is self-adjoint, has compact resolvent and
  \begin{equation}
    \label{eq:lambda-1-rayleigh}
    \inf\sigma(-\Delta_{\Vo})
    =\lambda_1(-\Delta_{\Vo})
    =\inf_{\substack{w\in H_{\Vo}^1(G)\\w\neq 0}}\frac{a_0(w,w)}{\|w\|_{L^2(G)}}>0.
  \end{equation}
  Moreover, $\Delta_{\lambda,\Vo}$ generates a positive semigroup on $L^2(G)$.
\end{proposition}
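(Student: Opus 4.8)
The plan is to realise $-\Delta_\Vo$ as the self-adjoint operator associated with a closed, densely defined, symmetric, nonnegative form on $L^2(G)$, and then to read off all the asserted properties from standard form theory (e.g.\ as in Ouhabaz's monograph) together with the Beurling--Deny criteria.

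First I would introduce the form $a_0$ obtained by restricting the sesquilinear form $a$ from \eqref{eq:a-form} to the form domain $H^1_\Vo(G)$ defined in \eqref{eq:HVo}. Since $H^1(G) = C(G)\cap\tilde H^1(G)$ is complete for the $H^1$-norm and the vertex trace maps are bounded on $H^1(G)$, the space $H^1_\Vo(G)$ is a closed subspace of $H^1(G)$; as the form norm $a_0(\phdot,\phdot)+\|\phdot\|_{L^2(G)}^2$ coincides with the $H^1$-norm, the form $a_0$ is closed. It is densely defined (any $C^\infty_c$-function supported in the interior of a single edge, extended by zero, lies in $H^1_\Vo(G)$, and such functions are dense in $L^2(G)$), symmetric and nonnegative, hence generates a self-adjoint operator $A\ge 0$. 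To identify $A$ with $-\Delta_\Vo$ I would use the integration-by-parts identity \eqref{eq:a-form-integrated-by-parts}: a function $f$ lies in $D(A)$ iff there is $g\in L^2(G)$ with $a_0(f,\phi)=(g,\phi)_{L^2(G)}$ for all $\phi\in H^1_\Vo(G)$; testing against $\phi$ supported in the interior of one edge gives $f_e''=-g_e$, so $f\in\tilde H^2(G)$, and \eqref{eq:a-form-integrated-by-parts} then reduces the identity to $\sum_{v\in V}\overline{\phi(v)}(\nu f)(v)=0$. Since $\phi|_\Vo=0$ while $\phi$ takes arbitrary values on $\Vi$, this is equivalent to $(\nu f)|_\Vi=0$; together with $(\gamma f)|_\Vo=0$ (built into $H^1_\Vo(G)$) this yields $D(A)=D(\Delta_\Vo)$ and $A=-\Delta_\Vo$, so $-\Delta_\Vo$ is self-adjoint.

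Next, compactness of the resolvent follows because the form domain embeds compactly into $L^2(G)$: each $H^1([0,L_e])$ embeds compactly into $L^2([0,L_e])$ by Rellich--Kondrachov and $E$ is finite, so $H^1_\Vo(G)\hookrightarrow L^2(G)$ is compact, which for a nonnegative self-adjoint form operator is equivalent to compact resolvent. Hence $\sigma(-\Delta_\Vo)$ is a nondecreasing sequence of eigenvalues of finite multiplicity, and the bottom eigenvalue equals the minimum of the Rayleigh quotient, giving \eqref{eq:lambda-1-rayleigh} with the infimum attained. To see $\lambda_1(-\Delta_\Vo)>0$ I would argue by contradiction: if the minimum were $0$, a minimiser $w\in H^1_\Vo(G)\setminus\{0\}$ would satisfy $w_e'=0$ on every edge, hence $w$ is constant on each edge; continuity across the vertices together with connectedness of $G$ forces $w$ to be globally constant, and $w|_\Vo=0$ with $\Vo\neq\emptyset$ then gives $w=0$, a contradiction. (Equivalently, the Poincar\'e inequality $\|w\|_{L^2(G)}^2\le C\,a_0(w,w)$ on $H^1_\Vo(G)$ follows from the same compactness-and-connectedness argument.)

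Finally, for positivity of the semigroup it suffices, by the first Beurling--Deny criterion, to check that $H^1_\Vo(G)$ is stable under $f\mapsto|f|$ with $a_0(|f|,|f|)\le a_0(f,f)$ for real $f$. This is immediate here: if $f=(f_e)\in H^1_\Vo(G)$ is real-valued, then $|f|=(|f_e|)\in C(G)$, $|f||_\Vo=0$, and each $|f_e|\in H^1([0,L_e])$ with $|f_e|'=\sign(f_e)\,f_e'$ a.e., so $|f|\in H^1_\Vo(G)$ and in fact $a_0(|f|,|f|)=a_0(f,f)$. Hence the semigroup $(e^{t\Delta_\Vo})_{t\ge0}$ is positive (even sub-Markovian); adding the real scalar potential only rescales it by $e^{\lambda t}$, so $\Delta_{\lambda,\Vo}=\Delta_\Vo+\lambda$ generates a positive semigroup as well. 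I expect the step requiring the most care to be the domain identification in the second paragraph --- in particular verifying that the boundary term in \eqref{eq:a-form-integrated-by-parts} separates cleanly into the Dirichlet condition on $\Vo$ and the Kirchhoff condition on $\Vi$ --- while the remaining assertions are routine consequences of standard form theory.
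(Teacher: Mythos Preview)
Your proposal is correct and follows essentially the same route as the paper: realise $-\Delta_\Vo$ as the operator associated with the restriction of $a$ to $H^1_\Vo(G)$, use the integration-by-parts identity~\eqref{eq:a-form-integrated-by-parts} to identify the domain, derive compact resolvent from a compact embedding, argue $\lambda_1>0$ via the connectedness-and-Dirichlet argument, and obtain positivity from the first Beurling--Deny criterion. The only minor cosmetic differences are that the paper invokes compactness of $\tilde H^2(G)\hookrightarrow L^2(G)$ (domain level) rather than $H^1_\Vo(G)\hookrightarrow L^2(G)$ (form level), and uses the $a(f^+,f^-)=0$ formulation of Beurling--Deny rather than the $a_0(|f|,|f|)\le a_0(f,f)$ one; both are equivalent and your extra care in the domain identification is well placed.
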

\begin{proof}
  Consider the form $a$ from \eqref{eq:a-form} restricted to the space $H^1_\Vo(G) \times H^1_\Vo(G)$. Clearly, $a$ is a symmetric, accretive sesquilinear form on $H_{Vo}^1(G)$. It follows from \eqref{eq:a-form-integrated-by-parts} that the operator induced by $a$ on $L^2(G)$ coincides with $-\Delta_\Vo$. Hence, $\Delta_\Vo$ is self-adjoint and $\sigma(-\Delta_\Vo) \subseteq [0,\infty)$. Since $D(\Delta_\Vo)$ is a subset of $\tilde H^2(G)$ which compactly embeds into $L^2(G)$, it follows that $\Delta_\Vo$ has compact resolvent. In particular, $\sigma(\Delta_\Vo)$ consists only of eigenvalues and therefore, it remains to show that $0$ is not an eigenvalue of $\Delta_\Vo$.

  If $f\in D(\Delta_{\Vo})$ and $a(f,f)=0$, then $f'=0$ on each edge of $G$. As $G$ is connected, $f$ is therefore constant on the entire graph $G$. Since $f$ satisfies Dirichlet boundary conditions on the non-empty set $\Vo$, we conclude that $f = 0$. Hence zero is not an eigenvalue of $\Delta_{\Vo}$. The identity \eqref{eq:lambda-1-rayleigh} is the standard Rayleigh quotient (variational) characterisation of the first eigenvalue $\lambda_1(-\Delta_{\Vo})$ of a self-adjoint operator with compact resolvent applied to this case.

  To prove the last statement note that $a(f^+,f^-)=0$, where $f^+:=\max\{f,0\}$ and $f^-:=\max\{-f,0\}$. According to the first Beurling--Deny criterion the semigroup generated by $\Delta_{\Vo}$ is positive, see for instance \cite[Theorem~2.6]{ouhabaz:05:ahe}.
\end{proof}

\begin{remark}
  It follows from \eqref{eq:lambda-1-rayleigh} that $\lambda_1(-\Delta_{\Vo})$ is monotone in $\Vo$. In particular, $\lambda_1(-\Delta_{\Vo})\geq\lambda_1(-\Delta_V)$.
\end{remark}

In the case where $\Vo = V$ and $\Vi = \emptyset$ it is easy to compute the spectrum of $\Delta_V$.

\begin{lemma}
  \label{lem:spectrum-Delta-V}
  We have $\sigma(-\Delta_V) = \left\{\left(\dfrac{\pi k}{L_e}\right)^2\colon k \in \bbN, \; e \in E\right\} \subseteq (0,\infty)$.
\end{lemma}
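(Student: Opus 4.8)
The plan is to observe that when $\Vi=\emptyset$ the operator $\Delta_V$ decouples over the edges into a finite orthogonal direct sum of one-dimensional Dirichlet Laplacians, whose spectra are classical, and then simply to read off the union.

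First I would unwind the definitions. Since $\Vi=\emptyset$, the condition $(\nu f)|_\Vi=0$ in Definition~\ref{def:laplace-Vo} is vacuous, so $D(\Delta_V)=\{f\in D(\Delta_{\max})\colon(\gamma f)|_V=0\}$. Recalling that $D(\Delta_{\max})=C(G)\cap\tilde H^2(G)$ and that $H^2$-functions on an interval embed into $C^1$, a function $f=(f_e)_{e\in E}$ lies in $D(\Delta_V)$ precisely when $f_e\in H^2(0,L_e)$ for every $e\in E$ and $f_e(0)=f_e(L_e)=0$; note that the continuity-across-vertices requirement built into $C(G)$ is then automatically satisfied because all vertex values vanish. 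Hence, under the identification $L^2(G)=\bigoplus_{e\in E}L^2(0,L_e)$, we have $-\Delta_V=\bigoplus_{e\in E}(-\Delta^D_e)$, where $\Delta^D_e$ denotes the Dirichlet Laplacian $u\mapsto u''$ on $L^2(0,L_e)$ with domain $\{u\in H^2(0,L_e)\colon u(0)=u(L_e)=0\}$.

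Next I would invoke two standard facts. Since $G$ is a simple graph on a finite vertex set, $E$ is finite, so the spectrum of the orthogonal direct sum is the (finite) union of the spectra, $\sigma(-\Delta_V)=\bigcup_{e\in E}\sigma(-\Delta^D_e)$. And the Dirichlet Laplacian on $[0,L]$ is the textbook example: the functions $x\mapsto\sin(\pi k x/L)$ with $k\in\bbN$ form an orthogonal basis of $L^2(0,L)$ consisting of eigenfunctions with eigenvalues $(\pi k/L)^2>0$, so $\sigma(-\Delta^D_e)=\{(\pi k/L_e)^2\colon k\in\bbN\}$. Combining these yields the claimed formula, and positivity of all the eigenvalues gives the inclusion in $(0,\infty)$. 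There is no genuine obstacle here; the only points requiring a moment of care are the bookkeeping that the domain really decouples over the edges — in particular that, once Dirichlet conditions are imposed at all endpoints, the continuity condition across vertices adds nothing — and the finiteness of $E$, both of which are immediate.
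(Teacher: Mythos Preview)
Your proof is correct and follows essentially the same approach as the paper: both arguments observe that the Dirichlet condition at every vertex forces the operator to decouple into a direct sum of one-dimensional Dirichlet Laplacians on the edges, and then read off the classical spectrum of each interval. Your version is simply more explicit about the bookkeeping (the automatic satisfaction of the continuity condition and the finiteness of $E$), whereas the paper states the decoupling in one line.
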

\begin{proof}
  Let $\lambda \in \bbC$. As every function in $D(\Delta_V)$ fulfils Dirichlet boundary conditions on all vertices, the eigenvalue problem $-\Delta_V f = \lambda f$ has a solution if and only if the eigenvalue problem for minus the Dirichlet Laplacian has a solution on at least one of the intervals $[0,L_e]$ ($e \in E$). Hence, $\sigma(-\Delta_V)$ has the claimed form since $\left\{\left(\dfrac{\pi k}{L}\right)^2\colon k \in \bbN\right\}$ is the spectrum of minus the Dirichlet Laplacian on an interval of length $L$.
\end{proof}
\subsection{Definition of the Dirichlet-to-Neumann operator}
Let $\lambda \in \bbC$ and $x\in C^{\Vo}$. As a first step to define the \emph{Dirichlet-to-Neumann operator} we look at the problem of finding a function $f\in D(\Delta_{\max})$ satisfying the inhomogeneous Dirichlet problem \eqref{eq:boundary-value-problem}. The following proposition characterises the solvability of this boundary value problem in terms of the corresponding eigenvalue problem for $\Delta_\Vo$.

\begin{proposition}
  \label{prop:boundary-value-problem-and-eigenvalue-problem}
  Let $\lambda \in \bbC$. Then the following assertions are equivalent:
  \begin{enumerate}[label={\normalfont (\roman*)}]
  \item For every $x \in \bbC^\Vo$ the problem~\eqref{eq:boundary-value-problem} has a unique solution $f \in D(\Delta_{\max})$.
  \item $\lambda \not\in \sigma(-\Delta_\Vo)$.
  \end{enumerate}
\end{proposition}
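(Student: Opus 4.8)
The plan is to prove the equivalence by relating the inhomogeneous Dirichlet problem~\eqref{eq:boundary-value-problem} to the homogeneous one for $\Delta_\Vo$ via a lifting argument. First I would reduce to a homogeneous problem: given $x\in\bbC^\Vo$, fix any $g\in D(\Delta_{\max})$ with $(\gamma g)|_\Vo = x$ (for instance take $g$ supported near the outer vertices, interpolating the boundary data linearly on one incident edge per vertex and vanishing elsewhere; one checks $g$ lies in $D(\Delta_{\max})$ since it is continuous across vertices and piecewise $H^2$). Then $f$ solves~\eqref{eq:boundary-value-problem} if and only if $h := f - g$ satisfies $h\in D(\Delta_{\max})$, $(\gamma h)|_\Vo = 0$, $(\nu h)|_\Vi = -(\nu g)|_\Vi$, and $\Delta_{\max} h + \lambda h = -(\Delta_{\max} g + \lambda g) =: \varphi \in L^2(G)$. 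The subtlety is that $h$ need not satisfy the \emph{homogeneous} Kirchhoff condition on $\Vi$ because of the term $(\nu g)|_\Vi$; to handle this cleanly it is better to incorporate the inner-vertex data into a weak formulation.

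The cleaner route is therefore to work directly with the form. I would use~\eqref{eq:a-form-integrated-by-parts}: for $f\in\tilde H^2(G)$ with $(\gamma f)|_\Vo = x$ and $(\nu f)|_\Vi = 0$, and for any test function $w\in H^1_\Vo(G)$, integration by parts gives
\begin{equation*}
  a(f,w) + \lambda (f,w)_{L^2} = \sum_{v\in V}\overline{w(v)}(\nu f)(v) = 0,
\end{equation*}
since $w$ vanishes on $\Vo$ and $(\nu f)$ vanishes on $\Vi$. Conversely, if $f\in H^1(G)$ with $(\gamma f)|_\Vo = x$ satisfies $a(f,w)+\lambda(f,w)_{L^2}=0$ for all $w\in H^1_\Vo(G)$, then testing against $C_c^\infty$ functions on each edge shows $f\in\tilde H^2(G)$ with $-f_e''=\lambda f_e$, hence $f\in D(\Delta_{\max})$; and then~\eqref{eq:a-form-integrated-by-parts} forces $(\nu f)(v)=0$ for all $v\in\Vi$. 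So~\eqref{eq:boundary-value-problem} is equivalent to the weak problem: find $f\in H^1(G)$ with $(\gamma f)|_\Vo = x$ and $a(f,w)+\lambda(f,w)_{L^2}=0$ for all $w\in H^1_\Vo(G)$. Writing $f = g + h$ with $g$ a fixed lift of $x$ as above and $h\in H^1_\Vo(G)$ unknown, this becomes: find $h\in H^1_\Vo(G)$ with $a_\lambda(h,w) = -a_\lambda(g,w)$ for all $w\in H^1_\Vo(G)$, where $a_\lambda := a + \lambda(\cdot,\cdot)_{L^2}$.

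Now the equivalence follows from the spectral theory of $\Delta_\Vo$ established in Proposition~\ref{prop:simple-properties-of-laplacian}. The operator associated with $a_\lambda$ on $H^1_\Vo(G)$ is $-\Delta_\Vo - \lambda$, which is boundedly invertible (as an operator from $H^1_\Vo(G)$ onto its dual, or equivalently the problem above has a unique solution for every right-hand side) precisely when $\lambda\notin\sigma(-\Delta_\Vo)$; this uses that $\Delta_\Vo$ is self-adjoint with compact resolvent, so $\lambda\notin\sigma(-\Delta_\Vo)$ iff $-\Delta_\Vo-\lambda$ is injective iff it is bijective (Fredholm alternative). For the direction \ref{item:main:formula-for-dtn-operator-with-inner-vertices}$\Rightarrow$\ref{item:main:adjacency-class}, i.e.\ (ii)$\Rightarrow$(i): unique solvability of the weak problem for each $x$ is immediate. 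For (i)$\Rightarrow$(ii), i.e.\ the contrapositive $\lambda\in\sigma(-\Delta_\Vo)\Rightarrow$ non-uniqueness: any nonzero eigenfunction $f_0\in D(\Delta_\Vo)$ with $-\Delta_\Vo f_0 = \lambda f_0$ solves~\eqref{eq:boundary-value-problem} with $x=0$, and $f_0\neq 0$, so uniqueness fails for $x=0$, hence (i) fails.

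The main obstacle is the careful construction of the lift $g$ and the verification that the weak formulation genuinely captures both the Dirichlet condition on $\Vo$ and the Kirchhoff condition on $\Vi$ — in particular that the Kirchhoff condition emerges automatically from the choice of test space $H^1_\Vo(G)$ via the boundary term in~\eqref{eq:a-form-integrated-by-parts}, rather than having to be imposed. Everything else is a routine invocation of the Fredholm alternative for the self-adjoint operator $\Delta_\Vo$ from Proposition~\ref{prop:simple-properties-of-laplacian}. One should also note that uniqueness in $D(\Delta_{\max})$ (rather than merely in $\tilde H^2(G)$) is not an issue, since $D(\Delta_{\max})\subseteq\tilde H^2(G)\cap C(G)$ and the argument above produces a solution in $D(\Delta_{\max})$ and shows any two differ by an element of $\ker(-\Delta_\Vo-\lambda)$.
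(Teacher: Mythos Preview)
Your approach is correct and complete, modulo a persistent sign slip: from~\eqref{eq:a-form-integrated-by-parts} with $f_e'' + \lambda f_e = 0$ one obtains $a(f,w) - \lambda(f,w)_{L^2} = \sum_v \overline{w(v)}(\nu f)(v)$, so the weak equation should read $a(f,w) - \lambda(f,w)_{L^2}=0$ and the form should be $a_\lambda := a - \lambda(\cdot,\cdot)_{L^2}$, matching the paper's~\eqref{eq:dtn-bilinear-form}. Since you correctly identify the associated operator as $-\Delta_\Vo - \lambda$ immediately afterwards, this is clearly a slip rather than a conceptual error. (Also, your cross-references to \ref{item:main:formula-for-dtn-operator-with-inner-vertices} and \ref{item:main:adjacency-class} point to parts of Proposition~\ref{prop:main}, not the present proposition.)

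The paper takes a somewhat different, more operator-theoretic route for (ii)$\Rightarrow$(i). Rather than fixing an arbitrary $H^2$-lift and invoking the Fredholm alternative on the weak problem, it first proves in a separate lemma (Lemma~\ref{lem:boundary-value-problem-special-case}) that~\eqref{eq:boundary-value-problem} is solvable for $\lambda=0$, by directly minimising $\tfrac12 a(f,f)$ over the affine constraint set $\{f\in H^1(G):(\gamma f)|_{\Vo}=x\}$; the minimiser $g$ is then already a genuine $\lambda=0$ solution, with the Kirchhoff condition $(\nu g)|_{\Vi}=0$ built in. For general $\lambda\notin\sigma(-\Delta_\Vo)$ the paper then writes down the solution explicitly as $f := g - \lambda R(\lambda,-\Delta_\Vo)g$ and checks the required properties by hand. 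Your argument is lighter in that any crude lift suffices and no auxiliary lemma is needed; the paper's version avoids the abstract Fredholm step and, as a side benefit, the energy-minimisation also delivers positivity of the $\lambda=0$ solution when $x\ge 0$, which is recorded in the same lemma and used later.
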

We first consider the case $\lambda=0$. This appears in \cite[page~44]{mugnolo:14:sme}, but without proof or reference, so for completeness we include a proof based on standard variational tools.
\begin{lemma}
  \label{lem:boundary-value-problem-special-case}
  Let $\lambda = 0$ and let $x \in \bbC^\Vo$. Then the problem~\eqref{eq:boundary-value-problem} has a solution $f \in D(\Delta_{\max})$. If $x\geq 0$, then there exists a solution $f\geq 0$.
\end{lemma}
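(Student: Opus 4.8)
The plan is to solve \eqref{eq:boundary-value-problem} for $\lambda = 0$ by the standard variational method, after reducing to homogeneous Dirichlet data. First I would fix any lifting $g \in H^1(G)$ of $x$, that is, a function with $(\gamma g)|_\Vo = x$: for instance, assign the value $x_v$ to each $v \in \Vo$ and the value $0$ to each $v \in \Vi$, and let $g$ be affine on every edge, interpolating these vertex values. Such a $g$ is continuous on $G$ and piecewise affine, hence lies in $H^1(G)$ (in fact in $D(\Delta_{\max})$ with $\Delta_{\max} g = 0$). One then looks for $f$ in the form $f = g + w$ with $w \in H^1_\Vo(G)$ solving the weak equation
\[
  a(w, v) = -a(g, v) \qquad \text{for all } v \in H^1_\Vo(G),
\]
where $a$ is the form from \eqref{eq:a-form}. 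The right-hand side is a bounded conjugate-linear functional on $H^1_\Vo(G)$, while $a$ restricted to $H^1_\Vo(G)\times H^1_\Vo(G)$ is symmetric, bounded and coercive: the Poincaré-type inequality $a(w,w) = \|w'\|_{L^2(G)}^2 \ge \lambda_1(-\Delta_{\Vo})\,\|w\|_{L^2(G)}^2$ implicit in \eqref{eq:lambda-1-rayleigh} (with $\lambda_1(-\Delta_{\Vo}) > 0$ by Proposition~\ref{prop:simple-properties-of-laplacian}) shows that $a(w,w)$ controls the full $H^1$-norm. Hence Lax--Milgram (equivalently, the Riesz representation theorem on the Hilbert space $(H^1_\Vo(G), a)$) produces a unique such $w$, and I set $f := g + w$.

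Next I would verify that this weak solution is a genuine solution of \eqref{eq:boundary-value-problem}. Testing $a(f,v) = 0$ against $v \in C_c^\infty$ supported in the interior of a single edge shows that $f$ is affine on each edge, so $f \in \tilde H^2(G)$; together with $f \in C(G)$ this gives $f \in D(\Delta_{\max})$ and $\Delta_{\max} f = 0$. Substituting $\Delta_{\max} f = 0$ into the integration-by-parts identity \eqref{eq:a-form-integrated-by-parts} and using $(\gamma v)|_\Vo = 0$ reduces the weak equation $a(f,v)=0$ to $\sum_{u \in \Vi} \overline{v(u)}\,(\nu f)(u) = 0$ for all $v \in H^1_\Vo(G)$. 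Since the trace map carries $H^1_\Vo(G)$ onto all of $\bbC^{\Vi}$ — again by the piecewise affine interpolation of arbitrary values on $\Vi$ and $0$ on $\Vo$ — this forces $(\nu f)|_\Vi = 0$; and $(\gamma f)|_\Vo = (\gamma g)|_\Vo = x$ by construction. Thus $f$ solves \eqref{eq:boundary-value-problem}.

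For the final assertion, suppose $x \ge 0$; in particular $x$, and hence $f$, is real. I would use $f^- := \max\{-f, 0\}$ as a test function. Lattice operations preserve membership in $H^1$ on each edge and preserve continuity across vertices, so $f^- \in H^1(G)$; moreover $(\gamma f)|_\Vo = x \ge 0$ gives $(\gamma f^-)|_\Vo = 0$, so $f^- \in H^1_\Vo(G)$ is admissible. Using $a(f, f^-) = 0$ together with the pointwise identity $f'\,(f^-)' = -|(f^-)'|^2$ a.e.\ (since $(f^-)' = -f'$ on $\{f<0\}$ and $(f^-)' = 0$ a.e.\ elsewhere), one gets $\|(f^-)'\|_{L^2(G)}^2 = -a(f, f^-) = 0$. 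Hence $f^-$ is constant on each edge, so constant on the connected graph $G$, and since it vanishes on the non-empty set $\Vo$ it vanishes identically. Therefore $f \ge 0$.

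The argument is essentially routine; the points that need a little care are the coercivity of $a$ on $H^1_\Vo(G)$ — which is precisely where the first-eigenvalue bound \eqref{eq:lambda-1-rayleigh} enters — and the surjectivity of $\gamma\colon H^1_\Vo(G) \to \bbC^{\Vi}$ used to read off the Kirchhoff conditions on $\Vi$, together with the standard facts on lattice operations in $H^1(G)$ needed for the positivity part. I do not expect any genuine obstacle beyond bookkeeping these points.
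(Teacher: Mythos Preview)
Your argument is correct, but it follows a somewhat different route from the paper's. The paper works directly on the affine constraint set $M=\{f\in H^1(G):(\gamma f)|_{\Vo}=x\}$ and obtains a solution by the direct method of the calculus of variations: a minimising sequence for $\mathcal E(f)=\tfrac12 a(f,f)$ is shown to be bounded (using the same Poincar\'e inequality \eqref{eq:lambda-1-rayleigh} you invoke), and a weak limit $f^\ast$ is extracted and shown to satisfy the Euler--Lagrange equation $a(f^\ast,g)=0$ for $g\in H^1_{\Vo}(G)$. You instead lift the boundary data explicitly and apply Lax--Milgram on $H^1_{\Vo}(G)$; this is the abstract version of the same idea and is arguably tidier, since it bypasses the weak compactness and lower semicontinuity steps. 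For the positivity statement the paper observes that $(f^\ast)^+$ also lies in $M$ with no larger energy, so a non-negative minimiser exists; your test-function argument with $f^-$ instead uses the weak equation directly and shows that the solution itself is non-negative, which is a slightly sharper conclusion (and does not rely on the minimisation structure). Both approaches are standard; yours is a bit more streamlined, while the paper's has the minor advantage of not needing to name an explicit lifting.
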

\begin{proof}
  Let $x \in \bbC^\Vo$ and set $M:=\{f \in C(G)\cap H^1 (G): (\trace f)|_{\Vo} = x\}$. We define a functional
  \begin{equation*}
    \mathcal{E}: M \to [0,\infty),\, f\mapsto \dfrac{1}{2}a(f,f),
  \end{equation*}
  where $a$ is given by \eqref{eq:a-form}. We claim that there exists $f^\ast \in M$ with
  \begin{equation*}
    \mathcal E(f^\ast)=\min_{f \in M} \mathcal{E}(f).
  \end{equation*}
  To do so let $h\in M$ and let $(f_n)_{n\in\mathbb N}$ be a minimising sequence in $M$. Note that by \eqref{eq:lambda-1-rayleigh} and the Cauchy-Schwarz inequality we have that
  \begin{equation*}
    \|f_n-h\|_{L^2(G)}^2
    \leq \lambda_1(-\Delta_{\Vo})a(f-h,f-h)
    \leq\lambda_1(-\Delta_{\Vo})\left(\mathcal E(f_n)+\mathcal E(h)\right)
  \end{equation*}
  for every $n\in\mathbb N$. Hence, $(f_n)_{n\in\mathbb N}$ is bounded in $H^1(G)$ and thus has a weak limit point $f^\ast \in H^1 (G)$. Then,
  \begin{equation*}
    \mathcal{E}(f^\ast) \leq \liminf_{n\to \infty} \mathcal{E}(f_n).
  \end{equation*}
  Since the trace $\trace$, having finite dimensional range, is a compact operator, we have $x = (\trace f_n)|_{\Vo} \to (\trace f^\ast)|_{\Vo}$ as $n\to\infty$ and thus $f^\ast \in M$. If $x\geq 0$, then also $(f^\ast)^+:=\max\{f^\ast,0\}\in M$ and $a((f^\ast)^+,(f^\ast)^+)\leq a(f^\ast,f^\ast)$. Hence, if $x\geq 0$, then there exists a positive minimizer.

  We next prove that the minimizer $f^\ast$ is a solution of \eqref{eq:boundary-value-problem}. Since $f \in M$ if and only if $f+g \in M$ for every $g \in H^1_\Vo (G)$, the Euler--Lagrange equation associated with $\mathcal{E}$ on $M$ reads $a(f^\ast,g) = 0$ for all $g \in H^1_\Vo(G)$. An argument as in Proposition~\ref{prop:simple-properties-of-laplacian} now yields $f^\ast \in \tilde H^2 (0,L_e)\cap H^1(G)$. It follows from \eqref{eq:a-form-integrated-by-parts} that $(f_e^\ast)'' = 0$ for all $e \in E$ and that $(\nu f^\ast)(v)=0$ for all $v \in \Vi$. Thus $f^\ast \in D(\Delta_{\max})$ solves \eqref{eq:boundary-value-problem}.
\end{proof}
Now we can prove Proposition~\ref{prop:boundary-value-problem-and-eigenvalue-problem}.

\begin{proof}[Proof of Proposition~\ref{prop:boundary-value-problem-and-eigenvalue-problem}]
  (i) $\Rightarrow$ (ii): If $\lambda\in\sigma(-\Delta_{\Vo})$, then $\lambda$ is an eigenvalue and thus \eqref{eq:boundary-value-problem} has infinitely many solutions for $x=0\in \bbC^{\Vo}$, namely all scalar multiples of the eigenfunction.

  (ii) $\Rightarrow$ (i): Let $\lambda\in\bbC\setminus\sigma(-\Delta_\Vo)$ and let $x \in \bbC^\Vo$. We have to show that problem~\eqref{eq:boundary-value-problem} has a unique solution $f \in D(\Delta_{\max})$. To prove the uniqueness let $f_1,f_2\in D(\Delta_{\max})$ be solutions of \eqref{eq:boundary-value-problem}. Then $f := f_1 - f_2\in D(\Delta_{\Vo})$ satisfies $\lambda f = -\Delta_{\max}f = -\Delta_\Vo f$. As $\lambda \not\in \sigma(-\Delta_\Vo)$ it follows that $f_1=f_2$.

  To prove the existence we make use of Lemma~\ref{lem:boundary-value-problem-special-case} asserting that problem~\eqref{eq:boundary-value-problem} has a solution in $g\in D(\Delta_{\max})$ in case that $\lambda = 0$. As $\lambda\not\in\sigma(-\Delta_\Vo)$, we can define
  \begin{equation*}
    f := g - \lambda R(\lambda,-\Delta_{\Vo})g \in D(\Delta_{\max}).
  \end{equation*}
  As $R(\lambda,-\Delta_\Vo)g\in D(\Delta_\Vo)$ we have $(\trace f)|_{\Vo}=(\trace g)_{\Vo}=x$ and $(\nu f)|_{\Vi}=(\nu g)|_{\Vi}=0$. This shows that $f$ fulfils the boundary conditions in problem~\eqref{eq:boundary-value-problem}. Moreover, we have
  \begin{equation*}
    \Delta_{\max}f
    = \Delta_{\max}g - \lambda \Delta_\Vo R(\lambda,-\Delta_\Vo)g
    = -\lambda \left(g - \lambda R(\lambda,-\Delta_\Vo)g \right)
    = -\lambda f.
  \end{equation*}
  Hence, $f$ is a solution of~\eqref{eq:boundary-value-problem}.
\end{proof}

With this knowledge the Dirichlet-to-Neumann operator can be established.
\begin{definition}
  \label{def:dirichlet-to-neumann-operator}
  Let $\lambda \in \bbR \setminus \sigma(-\Delta_\Vo)$. The \emph{Dirichlet-to-Neumann operator} on $G$ with potential $\lambda$ and outer vertex set $\Vo$ is defined as
  \begin{equation*}
    D_{\lambda,\Vo}\colon \bbC^\Vo \to \bbC^\Vo,\quad
    x \mapsto (\nu f)|_{\Vo},
  \end{equation*}
  where $f \in D(\Delta_{\max})$ is the unique solution of the problem~\eqref{eq:boundary-value-problem} which exists according to Proposition~\ref{prop:boundary-value-problem-and-eigenvalue-problem}.
\end{definition}
It is immediate from the definition and \eqref{eq:a-form-integrated-by-parts} that $D_{\lambda,\Vo}x = y$ if and only if
\begin{equation}
  \label{eq:dtn-bilinear-form}
  a_\lambda(f_x,g)
  := a(f_x,g) - \lambda (f_x,g)_{L^2 (G)}
  = (y, (\trace g)|_{\Vo})_{\bbC^\Vo}
\end{equation}
for all $g \in H^1 (G)$, where $f_x\in D(\Delta_{\max})$ is the unique solution of \eqref{eq:boundary-value-problem}. This says exactly that $D_{\lambda,\Vo}$ is the operator associated with the \emph{pair} $(a_\lambda, (\trace \,\cdot\,)|_{\Vo})$ in the sense of \cite[Theorem~2.1]{arendt:12:fei}; see also \cite[Section~3]{arendt:11:dno}. Since $a_\lambda$ is sesquilinear and closed and $(\trace\,\cdot\,)|_{\Vo}$ is bounded linear it follows from \cite[Theorem~4.5]{arendt:14:dno} that $D_{\lambda,\Vo}$ is a self-adjoint and in particular real operator. As we can (and will) identify the Dirichlet-to-Neumann operator with a matrix $D_{\lambda,\Vo} \in \bbC^{\Vo \times \Vo}$ with respect to the natural basis of $\bbC^{\Vo}$, we summarise this in the following proposition.

\begin{proposition}
  \label{prop:dtn-operator}
  Let $\lambda \in \bbR \setminus \sigma(-\Delta_\Vo)$. Then the Dirichlet-to-Neumann operator $D_{\lambda,\Vo}$ is self-adjoint and real, that is, $D_{\lambda,\Vo}$ is a symmetric matrix in $\bbR^{\Vo \times \Vo}$.
\end{proposition}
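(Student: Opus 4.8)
The statement is essentially recorded in the discussion immediately preceding it, and I would present it either as in that discussion or via the following self-contained argument. The plan is to work from the variational characterisation \eqref{eq:dtn-bilinear-form}, which says that $D_{\lambda,\Vo}x=y$ if and only if $a_\lambda(f_x,g)=(y,(\trace g)|_{\Vo})_{\bbC^{\Vo}}$ for all $g\in H^1(G)$, where $f_x\in D(\Delta_{\max})$ is the unique solution of \eqref{eq:boundary-value-problem} provided by Proposition~\ref{prop:boundary-value-problem-and-eigenvalue-problem}. I would first note that this identity itself follows by inserting $\Delta_{\max}f_x=-\lambda f_x$ and $(\nu f_x)|_{\Vi}=0$ into the integration-by-parts formula \eqref{eq:a-form-integrated-by-parts}.

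The first step is the Hermitian property. Take $x_1,x_2\in\bbC^{\Vo}$ with solutions $f_1:=f_{x_1}$ and $f_2:=f_{x_2}$; testing \eqref{eq:dtn-bilinear-form} with $g=f_2$ and using $(\trace f_2)|_{\Vo}=x_2$ gives $(D_{\lambda,\Vo}x_1,x_2)_{\bbC^{\Vo}}=a_\lambda(f_1,f_2)$, and symmetrically $(D_{\lambda,\Vo}x_2,x_1)_{\bbC^{\Vo}}=a_\lambda(f_2,f_1)$. Since $a(f,g)=\sum_{e\in E}\int_0^{L_e}f_e'\overline{g_e'}\,dx$ satisfies $\overline{a(f,g)}=a(g,f)$ and $\lambda\in\bbR$, we obtain $\overline{a_\lambda(f_2,f_1)}=a_\lambda(f_1,f_2)$, and hence $(D_{\lambda,\Vo}x_1,x_2)_{\bbC^{\Vo}}=\overline{(D_{\lambda,\Vo}x_2,x_1)_{\bbC^{\Vo}}}=(x_1,D_{\lambda,\Vo}x_2)_{\bbC^{\Vo}}$. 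Thus the matrix $D_{\lambda,\Vo}$ equals its conjugate transpose.

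The second step is reality of the entries. If $x\in\bbR^{\Vo}$, then $\overline{f_x}$ again belongs to $D(\Delta_{\max})$ and solves \eqref{eq:boundary-value-problem} with the same data, because the differential expression, the boundary conditions, the number $\lambda$ and the datum $x$ are all real; the uniqueness part of Proposition~\ref{prop:boundary-value-problem-and-eigenvalue-problem} then forces $f_x=\overline{f_x}$, so $f_x$ is real and $D_{\lambda,\Vo}x=(\nu f_x)|_{\Vo}\in\bbR^{\Vo}$. Evaluating on the standard basis of $\bbC^{\Vo}$ shows every entry of $D_{\lambda,\Vo}$ is real, and a matrix that is both Hermitian and real is symmetric; this completes the proof.

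I do not expect a genuine obstacle: the only delicate point is keeping track of which slot of the sesquilinear forms $a$ and $a_\lambda$ and of the inner product on $\bbC^{\Vo}$ is conjugate-linear, so that the symmetry of $a_\lambda$ for real $\lambda$ is transferred correctly to the Hermitian identity for $D_{\lambda,\Vo}$. If one prefers to avoid even this bookkeeping, one can instead quote \cite[Theorem~4.5]{arendt:14:dno} directly: $a_\lambda$ is a closed symmetric sesquilinear form, $(\trace\,\cdot\,)|_{\Vo}$ is a bounded linear map into the finite-dimensional space $\bbC^{\Vo}$, and the real data make the associated operator $D_{\lambda,\Vo}$ self-adjoint and real, i.e.\ a symmetric matrix in $\bbR^{\Vo\times\Vo}$.
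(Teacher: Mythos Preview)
Your proposal is correct. The paper does not give a separate proof of this proposition at all; it simply records the statement after the preceding paragraph, which establishes the variational identity \eqref{eq:dtn-bilinear-form} and then invokes \cite[Theorem~4.5]{arendt:14:dno} to conclude that $D_{\lambda,\Vo}$ is self-adjoint and real. This is precisely the alternative you offer at the end of your write-up.

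Your main argument is a genuinely different, more elementary route: rather than appealing to the abstract machinery of operators associated with form pairs, you work directly from \eqref{eq:dtn-bilinear-form}, testing with $g=f_{x_2}$ to obtain $(D_{\lambda,\Vo}x_1,x_2)=a_\lambda(f_{x_1},f_{x_2})$ and using the conjugate symmetry of $a_\lambda$ for real $\lambda$, then checking reality by observing that real data produce real solutions via the uniqueness in Proposition~\ref{prop:boundary-value-problem-and-eigenvalue-problem}. This buys self-containment and avoids the external reference, at the cost of a few lines of bookkeeping with conjugate-linear slots; the paper's approach is shorter but relies on the reader accepting the cited black box.
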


\section{Matrix representation of  the Dirichlet-to-Neumann operator}
\label{sec:matrix-representation}

This section is essentially devoted to the proof of Proposition~\ref{prop:main}. We will prove \ref{item:main:formula-for-dtn-operator-with-inner-vertices} first, and then \ref{item:main:adjacency-class} afterwards. That is, we will start by deriving the Schur complement formula \eqref{eq:DN-Vo}.

We start with the case $\Vo = V$, where we can determine a matrix representation of the Dirichlet-to-Neumann operator explicitly by computing it on every edge and then adding them all up; this is exactly \eqref{eq:DN-V}.

We first order the elements in $V=(v_1,v_2,\dots,v_n)$ and identify each element with the corresponding standard basis vector of $\bbC^n$. We also identify the edge between $v_k$ and $v_j$ by $(k,j)$. According to \cite[Section~3]{daners:14:nps} or \cite[Section~3.5.1]{berkolaiko:13:iqg}, for admissible $\lambda \in \bbR$ the operator associated with the interval of length $L_{kj}:=L_{v_k,v_j}\in(0,\infty)$ ($k\neq j$) can be represented by the matrix
\begin{equation*}
  M_{k,j}(\lambda):=
  \begin{bmatrix}
    0      & \dots & 0                    & \dots  & 0                    & \dots & 0      \\
    \vdots &       & \vdots               &        & \vdots               &       & \vdots \\
    0      & \dots & \alpha_{jk}(\lambda) & \dots  & -\beta_{jk}(\lambda) & \dots & 0      \\
    \vdots &       & \vdots               & \ddots & \vdots               &       & \vdots \\
    0      & \dots & -\beta_{kj}(\lambda) & \dots  & \alpha_{kj}(\lambda) & \dots & 0      \\
    \vdots &       & \vdots               &        & \vdots               &       & \vdots \\
    0      & \dots & 0                    & \dots  & 0                    & \dots & 0      \\
  \end{bmatrix}
  ,
\end{equation*}
where
\begin{equation}
  \label{eq:alpha}
  \alpha_{kj}(\lambda):=
  \begin{cases}
    \dfrac{\sqrt{\lambda}\*\cos\left(\sqrt{\lambda}L_{kj}\right)}
    {\sin\left(\sqrt{\lambda}L_{kj}\right)}
                      & \text{if $\lambda\neq 0$,} \\
    \dfrac{1}{L_{kj}} & \text{if $\lambda=0$,}
  \end{cases}
\end{equation}
and
\begin{equation}
  \label{eq:beta}
  \beta_{kj}(\lambda):=
  \begin{cases}
    \dfrac{\sqrt{\lambda}}{\sin\left(\sqrt{\lambda}L_{kj}\right)}
                      & \text{if $\lambda\neq 0$,} \\
    \dfrac{1}{L_{kj}} & \text{if $\lambda=0$.}
  \end{cases}
\end{equation}
For convenience we set $\alpha_{kj}=\beta_{kj}=0$ if $L_{kj}=\infty$ or $k=j$. We note that for $\lambda<0$ we can write
\begin{equation*}
  \beta_{kj}(\lambda)
  =\frac{\sqrt{\lambda}}{\sin\left(\sqrt{\lambda}L_{kj}\right)}
  =\frac{i\sqrt{-\lambda}}{\sin\left(i\sqrt{-\lambda}L_{kj}\right)}
  =\frac{i\sqrt{-\lambda}}{i\sinh\left(\sqrt{-\lambda}L_{kj}\right)}
  =\frac{\sqrt{-\lambda}}{\sinh\left(\sqrt{-\lambda}L_{kj}\right)}
\end{equation*}
and similarly
\begin{equation*}
  \alpha_{kj}(\lambda)
  =\frac{\sqrt{-\lambda}\cosh\left(\sqrt{-\lambda}L_{kj}\right)}
  {\sinh\left(\sqrt{-\lambda}L_{kj}\right)}.
\end{equation*}
Furthermore, $\lambda=0$ is a removable singularity of $\alpha_{kj}$ and $\beta_{kj}$ since
\begin{equation*}
  \lim_{\lambda\to 0}\beta_{kj}(\lambda)
  =\frac{1}{L_{kj}}\lim_{\lambda\to 0}
  \frac{\sqrt{\lambda}L_{kj}}{\sin\left(\sqrt{\lambda}L_{kj}\right)}
  =\dfrac{1}{L_{kj}}
  =\lim_{\lambda\to 0}\alpha_{kj}(\lambda).
\end{equation*}
Note that by definition of the (principal) square root on $\mathbb C$, and since $\sin$ is odd and $\cos$ is even, it follows that $\lambda\mapsto\beta_{kj}(\lambda)$ and $\lambda\mapsto\alpha_{kj}(\lambda)$ are analytic functions apart from isolated singularities at $(\pi k/L_{kj})^2$ for $k\in\bbN$. It follows that
\begin{equation}
  \label{eq:dtn-operator}
  D_{\lambda,V}=\sum_{1\leq k<j\leq n}M_{kj}(\lambda)
\end{equation}
for all $\lambda\in\bbC\setminus\sigma(-\Delta_{\Vo})$. Equation~\eqref{eq:beta} also shows that $[D_{\lambda,V}]_{k,j}\neq 0$ for all $(k,j)\in E$. As $G$ is connected it follows from Lemma~\ref{lem:irreducible-matrix} that $D_{\lambda,V}$ is irreducible for all $\lambda\in\mathbb C\setminus\{-\Delta_{V}\}$. We summarise the result in the following proposition.

\begin{proposition}
  \label{prop:explicit-formula-for-dtn-operator}
  Let $\lambda \in \bbC \setminus \sigma(\Delta_V)$. Then, using the notation introduced above, the Dirichlet-to-Neumann operator $D_{\lambda,V}$ is given by the symmetric matrix with entries
  \begin{equation}
    \label{eq:dtn-operator-entries}
    [D_{\lambda,V}]_{kj}=
    \begin{cases}
      -\beta_{kj}(\lambda) & \text{if $k\neq j$,} \\
      \displaystyle\sum_{l=1}^n\alpha_{kl}(\lambda)
                           & \text{if $k=j$,}
    \end{cases}
  \end{equation}
  where $1\leq k,j\leq n$. Moreover, $D_{\lambda,V}$ is irreducible, with $[D_{\lambda,V}]_{kj}\neq 0$ for all $(k,j)\in E$. Finally, the map $\lambda\mapsto D_{\lambda,V}$ is analytic as a function of $\lambda\in\mathbb C\setminus\sigma(-\Delta_{V})$ with a pole of order one at every point in $\sigma(-\Delta_{V})$.
\end{proposition}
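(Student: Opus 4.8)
The plan is to derive everything from the decomposition \eqref{eq:dtn-operator}, $D_{\lambda,V}=\sum_{1\le k<j\le n}M_{kj}(\lambda)$, which is already in hand. For the entries \eqref{eq:dtn-operator-entries} I would simply sum this identity componentwise. As $G$ has no parallel edges, for $k\ne j$ only the single summand $M_{kj}(\lambda)$ can contribute off the diagonal in the $(k,j)$-slot, so $[D_{\lambda,V}]_{kj}=-\beta_{kj}(\lambda)$; on the diagonal, each summand $M_{kl}(\lambda)$ with $l$ adjacent to $k$ contributes $\alpha_{kl}(\lambda)$ in the $(k,k)$-slot, and since $\alpha_{kl}=0$ whenever $(k,l)\notin E$ or $k=l$, this sum is exactly $\sum_{l=1}^n\alpha_{kl}(\lambda)$. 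Symmetry follows from $\alpha_{kj}=\alpha_{jk}$ and $\beta_{kj}=\beta_{jk}$.

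For the non-vanishing of the off-diagonal entries and the irreducibility, I would note that for $\lambda\in\mathbb C\setminus\sigma(-\Delta_V)$ the denominator $\sin(\sqrt\lambda\,L_{kj})$ is nonzero---by Lemma~\ref{lem:spectrum-Delta-V}, every $\lambda$ with $\sin(\sqrt\lambda\,L_{kj})=0$ lies in $\sigma(-\Delta_V)$---so $\beta_{kj}(\lambda)$ is finite, and it is nonzero because $\sin$ is entire while the numerator $\sqrt\lambda$ vanishes only at $\lambda=0$, where the removable-singularity value is $\beta_{kj}(0)=1/L_{kj}\ne 0$. Hence $[D_{\lambda,V}]_{kj}\ne 0$ precisely for $(k,j)\in E$, and since $G$ is connected, Lemma~\ref{lem:irreducible-matrix} yields that $D_{\lambda,V}$ is irreducible.

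Analyticity of $\lambda\mapsto D_{\lambda,V}$ on $\mathbb C\setminus\sigma(-\Delta_V)$ is immediate once it is known entrywise, since each entry is a finite sum of the functions $\alpha_{kj},\beta_{kj}$, which were already observed to be analytic away from the points $(\pi m/L_{kj})^2$ ($m\in\mathbb N$), all of which lie in $\sigma(-\Delta_V)$. To control the order of the poles, I would fix $\mu\in\sigma(-\Delta_V)$ and an edge $(k,j)$ with $\mu=(\pi m/L_{kj})^2$, and compute $\tfrac{d}{d\lambda}\sin(\sqrt\lambda\,L_{kj})\big|_{\lambda=\mu}=\tfrac{L_{kj}}{2\sqrt\mu}\cos(\pi m)\ne 0$, so $\lambda\mapsto\sin(\sqrt\lambda\,L_{kj})$ has a simple zero at $\mu$. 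Since $\sqrt\lambda$ and $\sqrt\lambda\cos(\sqrt\lambda\,L_{kj})$ are holomorphic and nonzero at $\mu$, both $\alpha_{kj}$ and $\beta_{kj}$ then have at worst a simple pole there, and hence so does the finite sum $D_{\lambda,V}$.

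It remains to show that the pole at $\mu$ is genuinely present, i.e.\ of order exactly one, and this is the step I expect to require the most care, because several edges may contribute a pole at the same $\mu$ and cancellation must be ruled out. I would compute the residue matrix $R:=\operatorname{Res}_{\lambda=\mu}D_{\lambda,V}=\sum\operatorname{Res}_{\lambda=\mu}M_{kj}(\lambda)$, the sum running over those edges $(k,j)$ with $\mu=(\pi p/L_{kj})^2$ for some $p\in\mathbb N$. Using $\sin(\sqrt\lambda\,L_{kj})=(-1)^m\sin\big((\sqrt\lambda-\sqrt\mu)L_{kj}\big)$ and $\lambda-\mu=(\sqrt\lambda-\sqrt\mu)(\sqrt\lambda+\sqrt\mu)$, a short expansion gives $\operatorname{Res}_{\lambda=\mu}\alpha_{kj}=2\mu/L_{kj}$, which is the common value of the $(k,k)$- and $(j,j)$-entries of $\operatorname{Res}_{\lambda=\mu}M_{kj}$. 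Since $\sigma(-\Delta_V)\subseteq(0,\infty)$ we have $\mu>0$, so these contributions are all strictly positive and cannot cancel: any diagonal entry of $R$ at a vertex incident to one of the relevant edges is strictly positive, and such an edge exists because $\mu\in\sigma(-\Delta_V)=\{(\pi k/L_e)^2:k\in\mathbb N,\ e\in E\}$ by Lemma~\ref{lem:spectrum-Delta-V}. Thus $R\ne 0$, and the pole has order exactly one.
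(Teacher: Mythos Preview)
Your argument is correct and largely parallels the paper's: the formula, irreducibility, and analyticity are all handled the same way, and the pole-order estimate via the simple zero of $\lambda\mapsto\sin(\sqrt{\lambda}L_{kj})$ matches the paper's direct limit computation of $(\lambda-\mu)\beta_{kj}(\lambda)$.

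The one place you take a different route is in excluding cancellation at a pole $\mu\in\sigma(-\Delta_V)$. You compute the residue of the \emph{diagonal} entries, observe that each contributing term $\operatorname{Res}_{\lambda=\mu}\alpha_{kj}=2\mu/L_{kj}$ is strictly positive, and conclude $R\neq 0$. This is correct and even gives extra sign information. The paper's approach is shorter: since the \emph{off-diagonal} entry $[D_{\lambda,V}]_{kj}=-\beta_{kj}(\lambda)$ for an edge $(k,j)$ with $\mu=(\pi m/L_{kj})^2$ is a single function---not a sum---there is nothing to cancel, and its residue $-2(-1)^m\mu$ is manifestly nonzero. So your concern that ``several edges may contribute a pole at the same $\mu$ and cancellation must be ruled out'' only arises on the diagonal; picking any one off-diagonal entry bypasses the issue entirely.
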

\begin{proof}
  We only need to show that $\lambda\in\sigma(-\Delta_V)$ are poles of order one. The singularities occur at $\sigma(-\Delta_V)$ as determined in Lemma~\ref{lem:spectrum-Delta-V}. We note that
  \begin{align*}
    \beta_{kj}(\lambda)\left(\lambda-\left(\frac{\pi k}{L_{kj}}\right)^2\right)
     & =\sqrt{\lambda}\left(\sqrt{\lambda}+\frac{\pi k}{L_{kj}}\right)
    \frac{\sqrt{\lambda}-\frac{\pi k}{L_{kj}}}{\sin\left(\sqrt{\lambda}L_{kj}\right)-\sin(\pi k)} \\
     & \to 2\left(\frac{\pi k}{L_{kj}}\right)^2\frac{1}{\cos(\pi k)}
    =2(-1)^k\left(\frac{\pi k}{L_{kj}}\right)^2
  \end{align*}
  as $\lambda\to\left(\frac{\pi k}{L_{kj}}\right)^2$, showing that $\left(\frac{\pi k}{L_{kj}}\right)^2$ is a pole of order one for $\beta_{kj}(\lambda)$. A similar statement holds for $\alpha_{kj}(\lambda)$.
\end{proof}
\begin{example}
  \label{ex:nw-4-4-matrix}
  As an example consider the graph shown in Figure~\ref{fig:nw-3-2} with $V=\{v_1,v_2,v_3,v_4\}$. The matrix representing $D_{\lambda,V}$ is given by
  \begin{equation}
    \label{eq:nw-4-4-matrix}
    D_{\lambda,V}=
    \begin{bmatrix}
      \alpha_{14} & 0                       & 0                       & -\beta_{14}                         \\
      0           & \alpha_{23}+\alpha_{24} & -\beta_{23}             & -\beta_{24}                         \\
      0           & -\beta_{32}             & \alpha_{32}+\alpha_{34} & -\beta_{34}                         \\
      -\beta_{41} & -\beta_{42}             & -\beta_{43}             & \alpha_{41}+\alpha_{42}+\alpha_{43} \\
    \end{bmatrix}
    .
  \end{equation}
\end{example}
With this background, we can now give the proof of our first result.

\begin{proof}[Proof of Proposition~\ref{prop:main}\ref{item:main:formula-for-dtn-operator-with-inner-vertices}] The aim is to use $D_{\lambda,V}$ to compute $D_{\lambda,\Vo}$ in the case $\Vo$ is a proper subset of $V$. We assume that $\Vo$ has $m<n$ elements. Fix $\lambda\not\in\sigma(-\Delta_V)$ and consider the block matrix \eqref{eq:DN-V} with $A\in\bbC^{m\times m}$, $C\in\bbC^{(n-m)\times(n-m)}$ and $B\in\bbC^{(n-m)\times m}$. Suppose that we are given Dirichlet data $x\in \mathbb C^m$ on $\Vo$. By Proposition~\ref{prop:boundary-value-problem-and-eigenvalue-problem} the boundary value problem \eqref{eq:boundary-value-problem} has a unique solution $f\in H^1(G)$. Setting $y:=(\trace f)|_{\Vi}$ we conclude that
  \begin{equation*}
    -D_{\lambda,V}(\trace f)=
    \begin{bmatrix}
      A & B^T \\
      B & C
    \end{bmatrix}
    \begin{bmatrix}
      x \\y
    \end{bmatrix}
    =
    \begin{bmatrix}
      -(\nu f)|_{\Vo} \\0
    \end{bmatrix}
    .
  \end{equation*}
  The above matrix equation is equivalent to the system of equations
  \begin{equation}
    \label{eq:DN-system}
    \begin{aligned}
      Ax+B^Ty & =-(\nu f)|_{\Vo}=-D_{\lambda,\Vo}x, \\
      Bx+Cy   & =0.
    \end{aligned}
  \end{equation}
  This shows that if $\lambda\in\bbC\setminus\sigma(D_{\lambda,V})$ and $x\in\bbC^m$, then given $x\in\bbC^m$ there exists a unique $y\in\bbC^{n-m}$ such that $-Cy=Bx$. In particular, this implies that
  \begin{equation}
    \label{eq:im-BC}
    \im(B)\subseteq\im(C).
  \end{equation}
  To show that $\ker(C)=\{0\}$ and hence that $C$ is indeed invertible, we fix $y_0\in\ker(C)$. Then, by the symmetry of $C$, $y_0\in\ker(C^T)$. It follows that $0=z^TC^Ty_0=(Cz)^Ty_0$ for all $z\in\mathbb C^{n-m}$. Using \eqref{eq:im-BC} we conclude that $0=(Bx)^Ty_0=x^T(B^Ty_0)$ for all $x\in\bbC^{m}$. Hence, $B^Ty_0=0$, showing that $\ker(C)\subseteq\ker(B^T)$. As a consequence, for every $y_0\in\ker(C)$, we have that
  \begin{equation*}
    -D_{\lambda,V}
    \begin{bmatrix}
      0 \\y_0
    \end{bmatrix}
    =
    \begin{bmatrix}
      A & B^T \\
      B & C
    \end{bmatrix}
    \begin{bmatrix}
      0 \\y_0
    \end{bmatrix}
    =
    \begin{bmatrix}
      0 \\0
    \end{bmatrix}
    =
    \begin{bmatrix}
      -(\nu f)|_{\Vo} \\0
    \end{bmatrix}
    ,
  \end{equation*}
  where $f$ is the unique solution of~\eqref{eq:boundary-value-problem} with $x=0$, that is, the unique $f \in D(\Delta_{\max})$ such that $\Delta_{\max} f + \lambda f = 0$, $(\gamma f)|_{\Vo} = x$ and $(\gamma f)|_{\Vi} = y_0$. By Proposition~\ref{prop:boundary-value-problem-and-eigenvalue-problem} $f=0$ is the only solution and thus $y_0=(\trace f)|_{\Vi}=0$. Hence, $\ker(C)=\{0\}$ and thus $C$ is invertible for every $\lambda\in\bbC\setminus\sigma(-\Delta_{\Vo})$. Hence, by \eqref{eq:DN-system} we can always write $y=(-C)^{-1}Bx$ and thus $-D_{\lambda,\Vo}(x)=-(\nu f)|_{\Vo}=Ax+B^Ty=Ax+B^T(-C)^{-1}Bx$ is the Schur complement of the block $C$ in \eqref{eq:DN-V}. This proves Proposition~\ref{prop:main}\ref{item:main:formula-for-dtn-operator-with-inner-vertices}.
\end{proof}

\begin{example}
  \label{ex:nw-4-3-matrix}
  As an example we can apply the formula to the graph in Example~\ref{ex:nw-4-3} and Figure~\ref{fig:nw-4-3} with $\Vo=\{v_1,v_2,v_3\}$. We have computed $D_{\lambda,V}$ in Example~\ref{ex:nw-4-4-matrix}. Hence the blocks in the decomposition \eqref{eq:DN-V} are
  \begin{align*}
    A & =
    \begin{bmatrix}
      -\alpha_{14} & 0                          & 0                          \\
      0            & -(\alpha_{23}+\alpha_{24}) & \beta_{23}                 \\
      0            & \beta_{32}                 & -(\alpha_{32}+\alpha_{34})
    \end{bmatrix}
    ,                                            \\
    B & =
    \begin{bmatrix}
      \beta_{41} & \beta_{42} & \beta_{43}
    \end{bmatrix}
    ,                                            \\
    C & =-[\alpha_{41}+\alpha_{42}+\alpha_{43}].
  \end{align*}
  We then have
  \begin{equation*}
    B^T(-C)^{-1}B
    =\frac{1}{\alpha_{41}+\alpha_{42}+\alpha_{43}}
    \begin{bmatrix}
      \beta_{14}\beta_{41} & \beta_{14}\beta_{42} & \beta_{14}\beta_{43} \\
      \beta_{24}\beta_{41} & \beta_{24}\beta_{42} & \beta_{24}\beta_{43} \\
      \beta_{34}\beta_{41} & \beta_{34}\beta_{42} & \beta_{34}\beta_{43} \\
    \end{bmatrix}
  \end{equation*}
  and so we can easily write down $D_{\lambda,\Vo}$ using formula~\eqref{eq:DN-Vo}.
\end{example}

We observe that due to \eqref{eq:dtn-operator} it follows that $D_{\lambda,V}\in\mathcal A(G)$, that is, the assertion of Proposition~\ref{prop:main}\ref{item:main:adjacency-class} holds for the special case $\Vo=V$. For the rest of this section, we will study the structure of the matrix $D_{\lambda,\Vo}$ more carefully, and in particular prove that $D_{\lambda,\Vo}\in\mathcal A(G_0)$ when $\Vo$ is a proper subset of $V$, which is exactly Proposition~\ref{prop:main}\ref{item:main:adjacency-class}. For that purpose we study the block decomposition \eqref{eq:DN-V} of the adjacency matrix $A_G$ of $G$. Consider the induced sub-graphs $G[\Vo]$ and $G[\Vi]$. These graphs are not necessarily connected, but they consist of a finite union of connected components. By re-numbering the vertices in $\Vo$ and $\Vi$ and the corresponding edges we can assume without loss of generality that all vertices are consecutively numbered within the connected components, starting with those in $G[\Vo]$. With that choice of numbering, the matrices $A$ and $C$ have a diagonal block structure
\begin{equation}
  \label{eq:A-C-block}
  A=
  \begin{bmatrix}
    A_1    & 0      & \dots  & 0                    \\
    0      & A_2    & \ddots & \vdots               \\
    \vdots & \ddots & \ddots & 0                    \\
    0      & \dots  & 0      & A_{m_{\outerVertex}}
  \end{bmatrix}
  \qquad\text{and}\qquad
  C=
  \begin{bmatrix}
    C_1    & 0      & \dots  & 0                    \\
    0      & C_2    & \ddots & \vdots               \\
    \vdots & \ddots & \ddots & 0                    \\
    0      & \dots  & 0      & C_{m_{\innerVertex}}
  \end{bmatrix}
  ,
\end{equation}
with every diagonal block representing a connected component of the induced sub-graphs $G[\Vo]$ and $G[\Vi]$, respectively. In particular, each of these blocks is an irreducible matrix by Lemma~\ref{lem:irreducible-matrix} in the appendix.

Due to the connectedness of $G$, each of the connected components of $G[\Vo]$ connects to at least one connected component of $G[\Vi]$. As $G$ is connected, also the reduced graph $G_{\outerVertex}=(\Vo,E_{\outerVertex})$ from Definition~\ref{def:reduced-graph} is connected. The situation is illustrated in Figure~\ref{fig:graph-with-subgraphs}. The dashed lines show the edges providing connections between the connected components of the induced graphs $G[\Vo]$ and $G[\Vi]$. The corresponding reduced graph $G_{\outerVertex}$ is shown in Figure~\ref{fig:graph-with-subgraphs-reduced}. In that example $A$ has three irreducible blocks, and $C$ has two irreducible blocks. The following proposition in particular contains the assertions of Proposition~\ref{prop:main}\ref{item:main:adjacency-class} by applying it to $Q=D_{\lambda,\Vo}$.
\begin{proposition}
  \label{prop:properties-for-dtn-operator-with-inner-vertices}
  Let $G=(V,E)$ be a connected graph as in Section~\ref{sec:introduction} and let $\Vo$ and $\Vi$ be the sets of outer and inner vertices, respectively. Let $G_{\outerVertex}=(\Vo,E_{\outerVertex})$ be the reduced graph associated with $\Vo$. Suppose that $Q\in\mathcal A(G)$ and consider a block decomposition of $Q$ of the form \eqref{eq:DN-V}.
  \begin{enumerate}[label={\normalfont (\roman*)}]
  \item If $C$ is invertible and
    \begin{equation}
      \label{eq:M}
      M:=A+B^T(-C)^{-1}B
    \end{equation}
    is the Schur complement of $C$, then $M\in\mathcal A(G_{\outerVertex})$.
  \item Assume in addition that $Q_{kj}>0$ for all $(k,j)\in E$ and that $\spb(C)<0$. Then the Schur complement \eqref{eq:M} exists and $M_{rs}>0$ for all $(r,s)\in E_{\outerVertex}$. Moreover, given $v_r,v_s\in\Vo$ with $r\neq s$, we have that $M_{rs}=A_{rs}$ if and only if $v_r$ and $v_s$ are not connected by a path through a connected component of $\Vi$.
  \end{enumerate}
\end{proposition}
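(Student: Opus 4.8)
For part (i), the plan is to work at the level of the adjacency structure. Since $Q\in\mathcal{A}(G)$, the block $B$ encodes exactly the edges of $G$ running between $\Vo$ and $\Vi$, while $C\in\mathcal{A}(G[\Vi])$ and $A\in\mathcal{A}(G[\Vo])$. The key observation is that $C$, by the block-diagonal structure \eqref{eq:A-C-block}, decomposes into irreducible blocks $C_1,\dots,C_{m_{\innerVertex}}$ corresponding to the connected components of $G[\Vi]$, hence $(-C)^{-1}$ has the same block-diagonal structure, and each inverse block $(-C_\ell)^{-1}$ is a full matrix (its zero pattern need not be inherited — indeed generically it has no zero entries at all, which is why the ``path through $\Vi$'' contribution is not visible edge-by-edge in $G$). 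Therefore the $(r,s)$-entry of $B^T(-C)^{-1}B$ is a sum over inner components $\ell$ of $\sum_{p,q\in\ell} B_{pr}\,[(-C_\ell)^{-1}]_{pq}\,B_{qs}$; this can be nonzero only if there exist $p,q$ in the same component of $G[\Vi]$ with $(r,p)\in E$ and $(s,q)\in E$, which is precisely the statement that $v_r$ and $v_s$ are joined by a path through $\Vi$, i.e.\ $(r,s)\in E_{\outerVertex}$. Adding $A$, whose nonzero off-diagonal entries sit at edges of $G[\Vo]\subseteq G_{\outerVertex}$, one concludes $M_{rs}=0$ whenever $(r,s)\notin E_{\outerVertex}$, which is $M\in\mathcal{A}(G_{\outerVertex})$.

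For part (ii) the sign analysis is the heart of the matter, and the key tool is that $\spb(C)<0$ makes $-C$ an invertible matrix whose inverse is entrywise nonnegative: indeed each $-C_\ell$ is irreducible (Lemma~\ref{lem:irreducible-matrix}), has nonpositive off-diagonal entries since $Q_{kj}>0$ on edges, and has spectral bound $\spb(C_\ell)<0$, so $-C_\ell$ is a nonsingular irreducible M-matrix and $(-C_\ell)^{-1}$ is strictly positive (all entries $>0$). In particular the Schur complement exists. Now fix an edge $(r,s)\in E_{\outerVertex}$. If $v_r,v_s$ are joined through a component $\ell$ of $\Vi$, pick inner vertices $p,q$ in that component with $B_{pr}>0$ and $B_{qs}>0$ (these are positive since $Q_{kj}>0$ on the corresponding edges of $G$). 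Then the term $B_{pr}[(-C_\ell)^{-1}]_{pq}B_{qs}$ is strictly positive, and all other terms in the double sum defining $[B^T(-C)^{-1}B]_{rs}$ are nonnegative by the same positivity of $B$ and $(-C)^{-1}$; together with $A_{rs}\ge 0$ (it is either $0$ or $Q_{rs}>0$) this gives $M_{rs}>0$. If instead $v_r,v_s$ are not joined through any component of $\Vi$, then every product $B_{pr}[(-C)^{-1}]_{pq}B_{qs}$ vanishes — because $B_{pr}$ and $B_{qs}$ can only both be nonzero for $p$ in the components neighbouring $v_r$ and $q$ in those neighbouring $v_s$, and since $(-C)^{-1}$ is block-diagonal the entry $[(-C)^{-1}]_{pq}$ is zero unless $p,q$ lie in the \emph{same} component, contradicting the no-path hypothesis — so $[B^T(-C)^{-1}B]_{rs}=0$ and $M_{rs}=A_{rs}$. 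Conversely, if $M_{rs}=A_{rs}$ then $[B^T(-C)^{-1}B]_{rs}=0$, and by the strict positivity just established this forces the absence of a connecting path through $\Vi$. This is the equivalence claimed.

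The main obstacle I anticipate is not any single estimate but making the bookkeeping of indices watertight: one must be careful that $B$ in Proposition~\ref{prop:properties-for-dtn-operator-with-inner-vertices} is the $(n-m)\times m$ block (rows indexed by $\Vi$, columns by $\Vo$), that $B^T$ is genuinely its transpose given the symmetry set-up, and that ``$(r,p)\in E$'' for $r\in\Vo$, $p\in\Vi$ is exactly the condition $B_{pr}\neq 0$. A secondary point requiring a line of justification is why $\spb(C)<0$ transfers to $\spb(C_\ell)<0$ for each block: this is immediate because $\sigma(C)=\bigcup_\ell\sigma(C_\ell)$ for a block-diagonal matrix. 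With these in place the argument is essentially combinatorial, and the positivity of $(-C_\ell)^{-1}$ supplied by the M-matrix/irreducibility lemma does the real work.
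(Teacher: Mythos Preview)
Your proposal is correct and follows essentially the same approach as the paper: exploit the block-diagonal structure of $C$ (and hence of $(-C)^{-1}$) along the connected components of $G[\Vi]$, invoke irreducibility together with $\spb(C_\ell)<0$ to get $(-C_\ell)^{-1}\gg 0$ (the paper cites Lemma~\ref{lem:positive-semigroups} rather than the M-matrix language, but it is the same fact), and then read off the combinatorics of $[B^T(-C)^{-1}B]_{rs}$ in terms of paths through $\Vi$. One tiny point to make explicit when you write it up: to conclude $M_{rs}>0$ for \emph{every} $(r,s)\in E_{\outerVertex}$ you should note that in the residual case where $v_r,v_s$ are joined only by a direct edge in $G[\Vo]$ (no path through $\Vi$), you have $M_{rs}=A_{rs}=Q_{rs}>0$ by hypothesis.
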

\begin{proof}
  We continue to use the above notation and setup and let $M$ be the Schur complement of $Q$ given by \eqref{eq:M}. Under the assumptions of (ii) we have $\spb(C)<0$ and thus $\spb(C_j)<0$ for each block matrix $C_j$. Hence, the irreducible matrix $C_j$ is invertible with $(-C_j)^{-1}\gg 0$ by Lemma~\ref{lem:positive-semigroups}, so $C$ is invertible as well.

  Let now $(r,s)\in E_{\outerVertex}$. If there is a connection between $v_r$ and $v_s$ through some connected component $\tilde G$ of $G[\Vi]$, then there exist vertices $v_j$ and $v_\ell$ in $\tilde G$ so that $(j,r)\in E$ and $(\ell,s)\in E$. In the example shown in Figure~\ref{fig:graph-with-subgraphs}, $v_1,v_2\in \Vo$ connect to vertex $v_8\in \Vi$, and vertex $v_3\in G[\Vo]$ connects to vertex $v_9\in \Vi$. Assume that $A_1$ and $A_2$ are the irreducible blocks corresponding to the connected components of $G[\Vo]$ containing $v_r$ and $v_s$, respectively. We assume the block $C_0$ of $C$ corresponding to the component $\tilde G$ of $G[\Vi]$ containing $v_j$ and $v_\ell$. The situation is shown in Figure~\ref{fig:L-blocks}. Under the assumption (ii) the entries $(j,r)$ and $(\ell,s)$ of $B$ are positive. As $(-C_0)^{-1}\gg 0$, it follows that
  \begin{equation*}
    [B^T(-C)^{-1}B]_{rs}>0.
  \end{equation*}
  Under the assumption (i), the entry $[B^T(-C)^{-1}B]_{rs}$ may or may not be zero. If there is no path from $v_r$ to $v_s$ through the component $\tilde G$ of $G[\Vi]$, then the $A_{rj}=0$ or $A_{sj}=0$ whenever $v_j\in \tilde G$. In either case
  \begin{equation*}
    [B^T(-C)^{-1}B]_{rs}=0.
  \end{equation*}
  In particular, under the assumptions in (ii), we have $M_{rs}\geq A_{rs}$ with $M_{rs}=A_{rs}$ if and only if $v_r$ and $v_s$ do not have a connection through $\Vi$. Hence in case (i) and (ii) we have $M\in\mathcal A(G_{\outerVertex})$. Moreover, in case (ii) we have $M_{rs}>0$ for all $(r,s)\in E_{\outerVertex}$.
  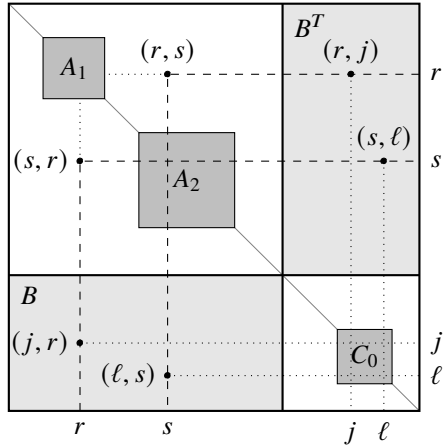
\begin{figure}
    \centering
    \begin{tikzpicture}[scale=0.9,xscale=-1,
        font=\footnotesize,
        ablock/.style={draw,fill=lightgray},
        cblock/.style={draw,ablock},
        bblock/.style={fill=black!10},
        row/.style={fill=black!10},
        transposed/.style={x={(0,1cm)},y={(1cm,0)}},
        mentry/.style={radius=1.25pt,fill,dashed},
        declare function={
            c=2;%
            c1=0.4;%
            c1w=0.8;%
            a1=0.7;%
            a1w=1.4;%
            a2=a1+a1w+0.5;%
            a2w=0.9;%
            a=4;%
            r1=c1+0.75*c1w;%
            r2=c+a2+0.4*a2w;%
            s1=c1+0.15*c1w;%
            s2=c+a1+0.7*a1w;%
          }]
      \draw[help lines] (0,0) -- (a+c,a+c);%

      \path[bblock] (0,c) rectangle ++(c,a);%
      \path[bblock,transposed] (0,c) rectangle ++(c,a);%
      \node[below right] at (c,a+c) {$B^T$};%
      \node[below right,transposed] at (c,a+c) {$B$};%

      \path[cblock] (c1,c1) rectangle ++(c1w,c1w);%
      \node at (c1+c1w/2,c1+c1w/2) {$C_0$};%
      \path[ablock] (c+a1,c+a1) rectangle ++(a1w,a1w);%
      \node at (c+a1+a1w/2,c+a1+a1w/2) {$A_2$};%
      \path[ablock] (c+a2,c+a2) rectangle ++(a2w,a2w);%
      \node at (c+a2+a2w/2,c+a2+a2w/2) {$A_1$};%

      \draw[thick] (0,c) -- (a+c,c);%
      \draw[thick,transposed] (0,c) -- (a+c,c);%

      \draw[dashed] (0,r2)  node[right] {$r$} -| (s2,0) node[below] {$s$};%
      \draw[dashed,transposed] (0,r2)  node[below] {$r$} |- (s2,0) node[right] {$s$};%
      \begin{scope}[dotted]
        \draw (r1,r2) -- (r1,0) node[below,black] {$j$};%
        \draw[transposed] (r1,r2) -- (r1,0) node[right,black] {$j$};%
        \draw (s1,s2) -- (s1,0) node[below,black] {$\ell$};%
        \draw[transposed] (s1,s2) -- (s1,0) node[right,black] {$\ell$};%
        \draw (s2,r2) -- (c+a2,r2);%
        \draw[transposed] (s2,r2) -- (c+a2,r2);%
      \end{scope}
      \draw[mentry] (r1,r2) circle node[above] {$(r,j)$};%
      \draw[mentry,transposed] (r1,r2) circle node[left] {$(j,r)$};%
      \draw[mentry] (s1,s2) circle node[above] {$(s,\ell)$};%
      \draw[mentry,transposed] (s1,s2) circle node[left] {$(\ell,s)$};%
      \draw[mentry] (s2,r2) circle node[above] {$(r,s)$};%
      \draw[mentry,transposed] (s2,r2) circle node[left] {$(s,r)$};%

      \draw[thick] (0,0) rectangle (a+c,a+c);%
    \end{tikzpicture}
    \caption{Block decomposition of $A_G$ connecting $v_r$ and $v_s$ through $\Vi$}
    \label{fig:L-blocks}
  \end{figure}
\end{proof}
\begin{remark}
  The above proof shows that the part $B^T(-C)B$ contributes to the entry $M_{rs}$ if there is a connection through $\Vi$. This is indicated by showing the dashed edges in the reduced graphs in Figure~\ref{fig:graph-with-subgraphs-reduced} and~\ref{fig:nw-4-3} as well as the further examples in Section~\ref{sec:examples}. What is not entirely clear is whether each $M_{rs} \neq 0$ for $(r,s) \in E_{\outerVertex}$. If all terms are positive, then Proposition~\ref{prop:properties-for-dtn-operator-with-inner-vertices} shows that this is the case; however, otherwise, there could be cancellations. We note that if $\Vo = V$, then it is always the case that $M_{rs} \neq 0$ for all $(r,s) \in E_{\outerVertex}$, as stated in Proposition~\ref{prop:explicit-formula-for-dtn-operator}.
\end{remark}

\section{Limit theorems for the Dirichlet-to-Neumann operator}
\label{sec:limit-theorems}
In this section we develop the essential tools to prove Theorem~\ref{thm:main-result}\ref{thm:main:positive}--\ref{thm:main:evpos}. We start with a key technical lemma.
\begin{lemma}
  \label{lem:ergodic}
  Assume that the family of lengths $(L_{kj})_{(k,j) \in E}$ is linearly independent over the field $\bbQ$. For each $(k,j) \in E$ let $(\gamma_{kj,\ell})_{\ell\in\mathbb N}$ be a sequence in $\bbR$ such that
  \begin{equation}
    \label{eq:ergodic-limit-assumption}
    \lim_{\ell\to\infty}\gamma_{kj,\ell}=\gamma_{kj}\in[-\infty,\infty]
    \qquad\text{and}\qquad
    \lim_{\ell\to\infty}\frac{\gamma_{kj,\ell}}{\ell}=0.
  \end{equation}
  Then there exists a sequence $(\lambda_\ell)_{\ell \in \bbN}$ in $(0,\infty)$ with $\lambda_\ell\to\infty$ as $\ell\to\infty$ such that for every edge $(k,j) \in E$ we have that
  \begin{equation}
    \label{eq:ergodic-limits}
    \lim_{\ell\to\infty}\left[\ell \sin\left(\sqrt{\lambda_\ell} L_{kj}\right)\right]=\gamma_{kj}
    \quad\text{and}\quad
    \lim_{\ell\to\infty}\cos\left(\sqrt{\lambda_\ell} L_{kj}\right)=1.
  \end{equation}
  Moreover, if $\gamma_{kj}=0$, then we can choose the $\lambda_\ell$ such that either $\sin(\sqrt{\lambda_\ell}L_{kj})>0$ or $\sin(\sqrt{\lambda_\ell}L_{kj})<0$ for all $\ell\in\mathbb N$.
\end{lemma}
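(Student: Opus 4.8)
The plan is to reduce the statement to a simultaneous Diophantine approximation problem and solve it via Weyl's equidistribution theorem. The key observation is that \eqref{eq:ergodic-limits} asks, for each edge $(k,j)$, that $\sqrt{\lambda_\ell}\, L_{kj}$ be close to a multiple of $2\pi$ — so that the cosine tends to $1$ — and that the small deviation, scaled by $\ell$, converge to the prescribed $\gamma_{kj}$. More precisely, I would look for $\sqrt{\lambda_\ell}$ of the form $2\pi\ell N_\ell + \varepsilon_\ell$ for a suitable integer $N_\ell\to\infty$ and a real correction $\varepsilon_\ell$ chosen so that $\varepsilon_\ell L_{kj} \approx \gamma_{kj}/\ell$ for every edge simultaneously. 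Since $\sin(2\pi\ell N_\ell L_{kj} + \varepsilon_\ell L_{kj})$ is close to $\sin(2\pi\ell N_\ell L_{kj})$ plus a term of size $\varepsilon_\ell L_{kj}$, the heart of the matter is to make the ``integer part'' $2\pi\ell N_\ell L_{kj}$ itself land within $o(1/\ell)$ of a multiple of $2\pi$ for all $(k,j)\in E$ at once.

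First I would set up the equidistribution input. By the rational linear independence of $(L_{kj})_{(k,j)\in E}$, the family $(L_{kj})$ together with is such that the vector $(L_{kj})_{(k,j)\in E}$ has the property that $\{ (L_{kj} t \bmod 1)_{(k,j)} : t \in \bbN\}$ is equidistributed in the torus $\bbT^{|E|}$ by Weyl's theorem; equivalently $(n L_{kj} \bmod 1)_n$ equidistributes as $n$ ranges over $\bbN$. Hence for any prescribed target point in $\bbT^{|E|}$ and any tolerance $\delta>0$, there are infinitely many $n\in\bbN$ with $\|n L_{kj} - (\text{target}_{kj})\| < \delta$ for all edges simultaneously (distance measured mod $1$). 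I would apply this with target $0$ and tolerance shrinking like $1/\ell^2$: for each $\ell$ choose $n_\ell\in\bbN$, $n_\ell\to\infty$, with $\mathrm{dist}(n_\ell L_{kj}, \bbZ) < \ell^{-2}$ for all $(k,j)$. Writing $n_\ell L_{kj} = m_{kj,\ell} + \eta_{kj,\ell}$ with $m_{kj,\ell}\in\bbZ$ and $|\eta_{kj,\ell}| < \ell^{-2}$, I then set $\sqrt{\lambda_\ell} := 2\pi n_\ell + \theta_\ell$ for a scalar $\theta_\ell$ to be fixed, so that $\sqrt{\lambda_\ell}\, L_{kj} = 2\pi m_{kj,\ell} + 2\pi\eta_{kj,\ell} + \theta_\ell L_{kj}$. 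Choosing $\theta_\ell = 0$ when all $\gamma_{kj}\in\{0\}$ would already force $\ell\sin(\sqrt{\lambda_\ell}L_{kj}) = \ell\sin(2\pi\eta_{kj,\ell}) = 2\pi\ell\eta_{kj,\ell} + O(\ell^{-5}) \to 0$ since $\ell\eta_{kj,\ell} = o(\ell\cdot\ell^{-2}) = o(1)$, and $\cos(\sqrt{\lambda_\ell}L_{kj})\to 1$; and one can further steer the sign of $\sin(2\pi\eta_{kj,\ell})$ — this is where I would use a slightly more careful equidistribution argument, targeting a point strictly inside the positive (resp. negative) orthant near $0$, using that $(n L_{kj})_n$ equidistributes on the full torus so $(n L_{kj} \bmod 1)$ can be made to lie in any prescribed small box, in particular one with all coordinates in $(0,\delta)$ or all in $(-\delta,0)$.

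The remaining case is when some $\gamma_{kj}$ is a nonzero finite number, or $\pm\infty$. Here I would not try to hit all targets with a single scalar $\theta_\ell$ — that overdetermines the system once $|E|\ge 2$ — but instead absorb the targets into the equidistribution step itself. That is, using \eqref{eq:ergodic-limit-assumption} pick for each $\ell$ a real number $r_{kj,\ell}$ with $r_{kj,\ell}/(2\pi)$ a valid fractional target, such that $\ell\cdot\sin(2\pi\cdot(\text{something}))$ produces $\gamma_{kj,\ell}$; concretely, since $\gamma_{kj,\ell}/\ell\to 0$, the quantity $t_{kj,\ell} := \tfrac{1}{2\pi}\arcsin(\gamma_{kj,\ell}/\ell)$ is well-defined for large $\ell$, tends to $0$, and satisfies $\ell\sin(2\pi t_{kj,\ell}) = \gamma_{kj,\ell}\to\gamma_{kj}$ with $\cos(2\pi t_{kj,\ell})\to 1$. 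I then apply Weyl equidistribution of $(n L_{kj}\bmod 1)_{(k,j)\in E}$ to find $n_\ell\to\infty$ with $\mathrm{dist}\bigl(n_\ell L_{kj} - t_{kj,\ell},\, \bbZ\bigr) < \ell^{-2}$ for every edge simultaneously — legitimate because for each fixed $\ell$ this is a single target point in $\bbT^{|E|}$ and equidistribution gives infinitely many such $n$. Setting $\sqrt{\lambda_\ell} := 2\pi n_\ell$ yields $\sqrt{\lambda_\ell} L_{kj} = 2\pi(\text{integer}) + 2\pi t_{kj,\ell} + 2\pi\xi_{kj,\ell}$ with $|\xi_{kj,\ell}|<\ell^{-2}$, whence $\ell\sin(\sqrt{\lambda_\ell}L_{kj}) = \ell\sin(2\pi t_{kj,\ell} + 2\pi\xi_{kj,\ell}) = \ell\sin(2\pi t_{kj,\ell}) + O(\ell\cdot\ell^{-2}) = \gamma_{kj,\ell} + o(1) \to \gamma_{kj}$, and likewise $\cos(\sqrt{\lambda_\ell}L_{kj})\to 1$; the case $\gamma_{kj}=\pm\infty$ is handled the same way, replacing the $\arcsin$ definition by any $t_{kj,\ell}\to 0$ with $\ell\sin(2\pi t_{kj,\ell})\to\pm\infty$ (possible since $\gamma_{kj,\ell}/\ell\to 0$ still permits $\gamma_{kj,\ell}\to\pm\infty$, e.g. $t_{kj,\ell} = \gamma_{kj,\ell}/(2\pi\ell)$ works whenever $|\gamma_{kj,\ell}|/\ell\to 0$). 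I expect the only genuine subtlety — the ``hard part'' — to be verifying that Weyl's criterion applies in the precise form needed: that rational linear independence of the \emph{lengths} $(L_{kj})_{(k,j)\in E}$ (not of $1$ together with them) suffices to equidistribute the sequence $n\mapsto (nL_{kj}\bmod 1)_{(k,j)}$ over the whole torus, which follows since $\sum_{(k,j)} c_{kj}(n L_{kj}) = n\sum c_{kj}L_{kj}$ is, for any nonzero integer vector $(c_{kj})$, an irrational multiple of $n$, so its exponential sums decay; this standard fact I would state and cite rather than reprove, and the bookkeeping of the $O(\ell^{-2})$ error terms against the $\ell$-scaling is then routine.
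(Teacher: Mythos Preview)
Your overall strategy --- encoding the desired limits as target points on the torus and invoking a Kronecker/equidistribution argument to hit them with tolerance $\ell^{-2}$ --- is essentially the paper's approach, and your error bookkeeping for the $\sin$ and $\cos$ limits is correct.

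There is, however, a genuine gap in the equidistribution step. You assert that rational linear independence of the lengths $(L_{kj})$ alone makes the \emph{integer} orbit $n\mapsto (nL_{kj}\bmod 1)$ equidistributed on $\bbT^{|E|}$, justifying this by the claim that $\sum c_{kj}L_{kj}$ is irrational for every nonzero integer vector $c$. That claim is false: linear independence over $\bbQ$ only gives $\sum c_{kj}L_{kj}\neq 0$, not irrationality. With two edges of lengths $1$ and $\sqrt{2}$, say, the family is $\bbQ$-independent, yet $n\cdot 1\bmod 1=0$ for every $n\in\bbN$, so the integer orbit lives on a proper subtorus and cannot approach an arbitrary target. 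Weyl's criterion for the discrete sequence requires $(L_{kj})\cup\{1\}$ to be $\bbQ$-independent, which is strictly stronger than the hypothesis of the lemma.

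The paper closes this gap with a scaling trick: pick $s\in\bbR$ such that $L\cup\{s^{-1}\}$ is linearly independent over $\bbQ$ (possible since $\bbR$ has infinite dimension over $\bbQ$); then $(sL_{kj})\cup\{1\}$ is $\bbQ$-independent, Kronecker applies to the rescaled lengths, and one sets $\sqrt{\lambda_\ell}=\mu_\ell s$. An equally valid repair of your argument is to switch to the continuous-parameter Kronecker theorem, for which the hypothesis you have \emph{is} sufficient: the flow $t\mapsto(tL_{kj}\bmod 1)$, $t\in\bbR$, is dense in $\bbT^{|E|}$ precisely when the $L_{kj}$ are $\bbQ$-independent, so take real $t_\ell\to\infty$ and put $\sqrt{\lambda_\ell}=2\pi t_\ell$. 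With either fix the remainder of your proof goes through.
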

\begin{proof}
  By \eqref{eq:ergodic-limit-assumption} we can choose $\ell_0\in\mathbb N$ such that
  \begin{equation*}
    \frac{|\gamma_{kj,\ell}|}{\ell}<1
  \end{equation*}
  for all $\ell>\ell_0$. By discarding finitely many elements of the sequence and renumbering we can assume that $\ell_0=0$. For $\ell\in\mathbb N$ consider the numbers
  \begin{equation*}
    z_{kj,\ell}:=\sqrt{1-\frac{\gamma_{kj}^2}{\ell^2}}+i\frac{\gamma_{kj}}{\ell}\in\mathbb C.
  \end{equation*}
  Then $z_{kj,\ell}\in\mathbb T:=\{z\in \mathbb C\colon |z|=1\}\subseteq\mathbb C$. Let $(k,j)\in E$. If $\gamma_{kj}>0$ let
  \begin{equation*}
    U_{kj,\ell}:=\left\{z\in\mathbb T\colon\left\lvert z-z_{kj,\ell}\right\rvert<\frac{1}{\ell^2}\text{ and }\repart(z)>0\right\}
  \end{equation*}
  and if $\gamma_{kj}<0$ let
  \begin{equation*}
    U_{kj,\ell}:=\left\{z\in\mathbb T\colon\left\lvert z-z_{kj,\ell}\right\rvert<\frac{1}{\ell^2}\text{ and }\repart(z)<0\right\}.
  \end{equation*}
  Due to that choice the neighbourhood $U_{kj,\ell}$ of $z_{kj,\ell}$ lies in the same quadrant as $z_{kj}$. If $\gamma_{kj}=0$ we can make either of the above choices for all $\ell\in\mathbb N$. For every $\ell\in\mathbb N$ we consider the open set
  \begin{equation*}
    U_\ell:=\prod_{(k,j)\in E} U_{kj,\ell}
  \end{equation*}
  on the $n$-dimensional torus $\mathbb T^n$. By assumption, $L=(L_{kj})_{(k,j)\in E}$ is linearly independent over $\mathbb Q$. As $\mathbb R$ is an infinite dimensional vector space over $\mathbb Q$, we can choose $s\in\mathbb R\setminus\mathbb Q$ such that the family $L\cup\{s^{-1}\}$ is still linearly independent and $sL_{kj}\in\mathbb R\setminus\mathbb Q$ for all $(k,j)\in E$. Then also $(sL_{kj})_{(k,j)\in E}\cup\{1\}$ is linearly independent over $\mathbb Q$. It follows from Kronecker's Lemma that there exists a sequence $(\mu_\ell)_{\ell\in\mathbb N}$ with $\mu_\ell\to\infty$ as $\ell\to\infty$ and
  \begin{equation*}
    e^{\mu_\ell sL_{kj}}\in U_{kj,\ell}
  \end{equation*}
  for all $(k,j)\in E$ and all $\ell\in\mathbb N$, see for instance \cite[Theorem~2.39]{eisner:15:ota}. If we set $\lambda_\ell:=(\mu_\ell s)^2$, then
  \begin{equation*}
    \left\lvert\sin\left(\sqrt{\lambda_\ell}L_{kj}\right)-\impart(z_{kj,\ell})\right\rvert
    =\left\lvert\sin\left(\sqrt{\lambda_\ell}L_{kj}\right)-\frac{\gamma_{kj,\ell}}{\ell}\right\rvert
    \leq\frac{1}{\ell^2}
  \end{equation*}
  and thus
  \begin{equation*}
    \gamma_{kj,\ell}-\frac{1}{\ell}
    <\ell\sin\left(\sqrt{\lambda_\ell}L_{kj}\right)
    <\gamma_{kj,\ell}+\frac{1}{\ell}
  \end{equation*}
  for all $\ell\in\mathbb N$. Hence the first limit in \eqref{eq:ergodic-limits} follows. Similarly, using that
  \begin{equation*}
    \repart(z_{kj,\ell})=\sqrt{1-\dfrac{\gamma_{kj,\ell}^2}{\ell^2}}>0
  \end{equation*}
  and the first limit in \eqref{eq:ergodic-limit-assumption} we deduce that
  \begin{align*}
    \left\lvert\cos\left(\sqrt{\lambda_\ell}L_{kj}\right)-1\right\rvert
     & \leq\left\lvert\cos\left(\sqrt{\lambda_\ell}L_{kj}\right)-\sqrt{1-\frac{\gamma_{kj,\ell}^2}{\ell^2}}\right\rvert
    +\left\lvert\sqrt{1-\frac{\gamma_{kj,\ell}^2}{\ell^2}}-1\right\rvert                                                \\
     & <\frac{1}{\ell^2}+\left\lvert\sqrt{1-\frac{\gamma_{kj,\ell}^2}{\ell^2}}-1\right\rvert\to 0
  \end{align*}
  as $\ell\to\infty$ for all $(k,j)\in E$, establishing the second limit in \eqref{eq:ergodic-limits}. This completes the proof of the lemma.
\end{proof}
In order to prove Theorem~\ref{thm:main-result} we make suitable choices for the $\gamma_{kj}$ in Lemma~\ref{lem:ergodic}. In particular, we use the lemma to show that a rescaled version of $-D_{\lambda,\Vo}$ converges to the \emph{graph Laplacian} $L_G$ for a suitable sequence $(\lambda_\ell)_{\ell\in\mathbb N}$ diverging to infinity. The graph Laplacian is the difference of the \emph{adjacency matrix} $A_G$ given in \eqref{eq:adjacency-matrix} and the diagonal matrix $D_G$ whose $j$-th diagonal entry is the degree of the vertex $v_j$, that is, the number of edges meeting at $v_j$. Hence the $n\times n$ matrix $L_G=A_G-D_G$ has constant row and column sums with value zero.
\begin{corollary}
  \label{cor:ergodic}
  Assume that the family of lengths $(L_{kj})_{(k,j) \in E}$ is linearly independent over the field $\bbQ$. For each $(k,j) \in E$ let $\gamma_{kj} \in [-\infty,\infty]\setminus\{0\}$. Let $D_{\lambda,V}$ be the Dirichlet-to-Neumann operator given by \eqref{eq:dtn-operator}. Then there exists a sequence $(\lambda_\ell)_{\ell\in\mathbb N}$ with $\lambda_\ell\to\infty$ such that
  \begin{equation}
    \label{eq:D-limit}
    \lim_{\ell\to\infty}\frac{1}{\ell\sqrt{\lambda_\ell}}D_{\lambda_\ell,V}=Q,
  \end{equation}
  where $Q$ is the matrix given by the entries
  \begin{equation}
    \label{eq:Q-off-diagonal}
    Q_{kj}=Q_{jk}:=
    \begin{cases}
      -\dfrac{1}{\gamma_{kj}} & \text{if }(k,j)\in E     \\
      0                       & \text{if }(k,j)\not\in E \\
    \end{cases}
  \end{equation}
  if $k\neq j$ and
  \begin{equation}
    \label{eq:Q-diagonal}
    Q_{kk}=-\sum_{\substack{j=1\\j\neq k}}^nQ_{kj}
  \end{equation}
  for $k=1,\dots,n$. By convention we set $1/\pm\infty:=0$. If $\gamma_{kj}>0$ for all $(k,j)\in E$, then $Q$ has spectral bound $\spb(-Q)=0$. If $\gamma_{kj}=1$ for all $(k,j)\in E$, then the matrix $-Q=L_G$ is the graph Laplacian.
\end{corollary}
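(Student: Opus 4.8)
The plan is to feed suitable (essentially constant) sequences into Lemma~\ref{lem:ergodic} and then read off the limit \eqref{eq:D-limit} entry by entry from the explicit formula \eqref{eq:dtn-operator-entries} for $D_{\lambda,V}$.

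First I would fix, for each edge $(k,j)\in E$, an auxiliary sequence $(\gamma_{kj,\ell})_{\ell\in\bbN}$ as follows: if $\gamma_{kj}\in\bbR\setminus\{0\}$ take $\gamma_{kj,\ell}:=\gamma_{kj}$ for all $\ell$; if $\gamma_{kj}=\pm\infty$ take $\gamma_{kj,\ell}:=\pm\sqrt{\ell}$ (with matching sign). In both cases $\gamma_{kj,\ell}\to\gamma_{kj}$ and $\gamma_{kj,\ell}/\ell\to 0$, so Lemma~\ref{lem:ergodic} applies and produces a sequence $(\lambda_\ell)_{\ell\in\bbN}$ in $(0,\infty)$ with $\lambda_\ell\to\infty$ and
\begin{equation*}
  \ell\sin\bigl(\sqrt{\lambda_\ell}\,L_{kj}\bigr)\longrightarrow\gamma_{kj}
  \qquad\text{and}\qquad
  \cos\bigl(\sqrt{\lambda_\ell}\,L_{kj}\bigr)\longrightarrow 1
  \quad\text{for every }(k,j)\in E .
\end{equation*}
Because the first limit is a non-zero real number (or $\pm\infty$), for every edge we have $\sin(\sqrt{\lambda_\ell}L_{kj})\neq 0$ for all large $\ell$, so Lemma~\ref{lem:spectrum-Delta-V} shows $\lambda_\ell\notin\sigma(-\Delta_V)$ for all large $\ell$; discarding finitely many terms, $D_{\lambda_\ell,V}$ is well defined for every $\ell$.

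Next I would compute the entries of $\tfrac{1}{\ell\sqrt{\lambda_\ell}}D_{\lambda_\ell,V}$ from \eqref{eq:dtn-operator-entries}, using the conventions $\alpha_{kj}=\beta_{kj}=0$ for $(k,j)\notin E$ and $1/(\pm\infty):=0$. For $k\neq j$ with $(k,j)\in E$,
\begin{equation*}
  \frac{1}{\ell\sqrt{\lambda_\ell}}\,[D_{\lambda_\ell,V}]_{kj}
  =-\frac{1}{\ell\sin\!\bigl(\sqrt{\lambda_\ell}L_{kj}\bigr)}\longrightarrow-\frac{1}{\gamma_{kj}}=Q_{kj},
\end{equation*}
whereas for $k\neq j$ with $(k,j)\notin E$ both sides are $0$; and for the diagonal,
\begin{equation*}
  \frac{1}{\ell\sqrt{\lambda_\ell}}\,[D_{\lambda_\ell,V}]_{kk}
  =\sum_{l\,:\,(k,l)\in E}\frac{\cos\!\bigl(\sqrt{\lambda_\ell}L_{kl}\bigr)}{\ell\sin\!\bigl(\sqrt{\lambda_\ell}L_{kl}\bigr)}
  \longrightarrow\sum_{l\,:\,(k,l)\in E}\frac{1}{\gamma_{kl}}=-\sum_{l\neq k}Q_{kl}=Q_{kk},
\end{equation*}
the last two equalities being \eqref{eq:Q-off-diagonal} and \eqref{eq:Q-diagonal}. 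As convergence of every entry is convergence in $\bbC^{n\times n}$, this establishes \eqref{eq:D-limit}.

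For the two remaining claims: if $\gamma_{kj}>0$ for all $(k,j)\in E$, then $Q$ is real symmetric with vanishing row sums and non-positive off-diagonal entries, so the standard quadratic-form identity for such matrices yields, for all $x\in\bbR^n$,
\begin{equation*}
  x^\top Q x=\sum_{\substack{(k,j)\in E\\ k<j}}\frac{1}{\gamma_{kj}}\,(x_k-x_j)^2\ \ge\ 0 ,
\end{equation*}
whence $Q$ is positive semi-definite; since the all-ones vector lies in $\ker Q$ by \eqref{eq:Q-diagonal}, we get $0\in\sigma(Q)$ and thus $\sigma(-Q)\subseteq(-\infty,0]$ with $0\in\sigma(-Q)$, i.e.\ $\spb(-Q)=0$. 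If $\gamma_{kj}=1$ for all $(k,j)\in E$, then $Q_{kj}=-1$ precisely when $(k,j)\in E$ and $Q_{kk}=\sum_{l:(k,l)\in E}1=\deg(v_k)$, so $Q=D_G-A_G$ and hence $-Q=A_G-D_G=L_G$.

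I do not expect a genuine obstacle: once Lemma~\ref{lem:ergodic} is available the argument is a short computation. The points most prone to slips are the uniform treatment of finite and infinite values of $\gamma_{kj}$ (handled by the convention $1/(\pm\infty)=0$ and the auxiliary choice $\gamma_{kj,\ell}=\pm\sqrt\ell$) and keeping the signs straight, so that the negative semi-definiteness of $-Q$ gives $\spb(-Q)=0$ rather than a strictly positive value.
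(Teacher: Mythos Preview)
Your proposal is correct and follows essentially the same route as the paper: choose constant sequences (or $\pm\sqrt{\ell}$ for infinite $\gamma_{kj}$), apply Lemma~\ref{lem:ergodic}, and read off the limits entry by entry from \eqref{eq:dtn-operator-entries}. The only minor deviation is in the verification of $\spb(-Q)=0$: the paper observes that the all-ones vector is a positive eigenvector for the eigenvalue $0$ and then invokes the Gershgorin disc theorem, whereas you establish positive semi-definiteness of $Q$ via the quadratic-form identity; both arguments are standard and equally short.
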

\begin{proof}
  If $\gamma_{kj}\in\mathbb R$ we let $(\gamma_{kj,\ell})_{\ell\in\mathbb N}$ be the constant sequence with $\gamma_{kj,\ell}=\gamma_{kj}$ for all $\ell\in\mathbb N$. If $\gamma_{kj}=\infty$ we set $\gamma_{kj}:=\sqrt{\ell}$ and if $\gamma_{kj}=-\infty$ we set $\gamma_{kj,\ell}:=-\sqrt{\ell}$ for all $\ell\in\mathbb N$. Then the assumptions of Lemma~\ref{lem:ergodic} are satisfied. Hence there exists a sequence $(\lambda_\ell)_{\ell\in\mathbb N}$ in $(0,\infty$ with $\lambda_\ell\to\infty$ and such that \eqref{eq:ergodic-limits} hold.

  We now recall that the coefficients of $D_{\lambda,V}$ have the form \eqref{eq:dtn-operator-entries}, where $\beta_{kj}(\lambda)$ and $\alpha_{k,j}(\lambda)$ are given by \eqref{eq:beta} and \eqref{eq:alpha}. Hence, if $(k,j)\in E$, then by \eqref{eq:ergodic-limits}
  \begin{equation*}
    \frac{1}{\ell\sqrt{\lambda_\ell}}[D_{\lambda_\ell,V}]_{kj}
    =-\frac{1}{\ell\sin\left(\sqrt{\lambda_\ell}L_{kj}\right)}
    \to-\frac{1}{\gamma_{kj}}=Q_{kj}
  \end{equation*}
  as $\ell\to\infty$. Similarly, using \eqref{eq:ergodic-limits}
  \begin{equation*}
    \frac{1}{\ell\sqrt{\lambda_\ell}}[D_{\lambda_\ell,V}]_{kk}
    =\sum_{\substack{j=1\\j\neq k}}^n\frac{\cos\left(\sqrt{\lambda_\ell}L_{kj}\right)}{\ell\sin\left(\sqrt{\lambda_\ell}L_{kj}\right)}
    \to\sum_{\substack{j=1\\j\neq k}}^n\frac{1}{\gamma_{kj}}=\sum_{\substack{j=1\\j\neq k}}^nQ_{kj}=Q_{kk}
  \end{equation*}
  for every $k=1,\dots,n$ as $\ell\to\infty$.

  By \eqref{eq:Q-diagonal}, the vector $u=[1,\dots,1]^T$ is a positive eigenvector of $-Q$ corresponding to the eigenvalue $0$. Hence, using the Gershgorin disc theorem we see that $\spb(-Q)=0$, see for instance \cite[Theorem~6.1.1]{horn:13:man}.

  Assume now that $\gamma_{kj}=1$ for all $(k,j)\in E$. Then the off-diagonal entries of $-Q$ are given by those of the adjacency matrix of the graph $G$. By \eqref{eq:Q-diagonal} the diagonal elements $Q_{kk}$ are degree of the vertices, showing that $-Q=L_G$.
\end{proof}

\section{Positivity properties of the Dirichlet-to-Neumann semigroup}
\label{sec:positivity}
In this section we focus our attention to the structure of the matrix $D_{\lambda,\Vo}$, in particular when $\Vo$ is a proper subset of $V$. In that case we saw that $-D_{\lambda,\Vo}$ is given by the Schur complement \eqref{eq:DN-Vo}, where the matrices $A$, $B$ and $C$ are blocks of the matrix $-D_{\lambda,V}$ as in \eqref{eq:DN-V}. In particular, the non-trivial off-diagonal entries of $D_{\lambda,V}$ are those with $(k,j)\in E$.

We next deal with the case $\lambda<\lambda_1 (-\Delta_{\Vo})$, where $\lambda_1(-\Delta_{\Vo})$ is given by~\eqref{eq:lambda-1}. This proves Theorem~\ref{thm:main-result}\ref{thm:main:basic}. The proof is inspired by that of \cite[Theorem~4.1]{arendt:12:fei}, but simplified. No special assumptions on the lengths $(L_e)_{e\in E}$ is required for that.
\begin{proposition}
  \label{prop:positivity-less-spb}
  Let $G$ be a connected quantum graph as in Section~\ref{sec:introduction} and let $\Vo$ and $\Vi$ the set of outer and inner vertices. If $\lambda < \lambda_1(-\Delta_{\Vo})$, then the matrix semigroup $(e^{-tD_{\lambda, \Vo}})_{t \ge 0}$ is strongly positive.
\end{proposition}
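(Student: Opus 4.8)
The plan is to reduce the statement to a structural property of the matrix $-D_{\lambda,\Vo}$: I want to show that $-D_{\lambda,\Vo}$ is a Metzler matrix (nonnegative off-diagonal entries) that is moreover irreducible; by the appendix results on positive matrix semigroups (Lemma~\ref{lem:positive-semigroups}, together with Lemma~\ref{lem:irreducible-matrix}) this is precisely equivalent to $(e^{-tD_{\lambda,\Vo}})_{t\ge 0}$ being strongly positive. More precisely, I aim to prove that $[D_{\lambda,\Vo}]_{jk}\le 0$ for all $j\neq k$, with strict inequality exactly when $(j,k)\in E_{\outerVertex}$. That $[D_{\lambda,\Vo}]_{jk}=0$ for $(j,k)\notin E_{\outerVertex}$ is already Proposition~\ref{prop:main}\ref{item:main:adjacency-class}; and once the off-diagonal entries are strictly positive on all of $E_{\outerVertex}$, irreducibility of $-D_{\lambda,\Vo}$ follows from the connectedness of the reduced graph $G_{\outerVertex}$ via Lemma~\ref{lem:irreducible-matrix}. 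So the whole proof comes down to the signs of the off-diagonal entries of $D_{\lambda,\Vo}$.

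The first step is a weak maximum principle: if $0\le x\in\bbR^{\Vo}$, then the unique solution $f$ of \eqref{eq:boundary-value-problem} satisfies $f\ge 0$ on $G$. Since $\lambda$ and $x$ are real and the solution is unique (Proposition~\ref{prop:boundary-value-problem-and-eigenvalue-problem}), $f$ is real-valued, and $f^-:=\max\{-f,0\}$ lies in $H^1_{\Vo}(G)$ because $(\gamma f)|_{\Vo}=x\ge 0$. Recalling that $a_\lambda(f,g)=0$ for every $g\in H^1_{\Vo}(G)$ (this is \eqref{eq:dtn-bilinear-form} with $(\gamma g)|_{\Vo}=0$), I would test against $g=f^-$ and use that $a(f^+,f^-)=0$ edgewise to obtain $a_\lambda(f^-,f^-)=0$. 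The Rayleigh characterisation \eqref{eq:lambda-1-rayleigh} then gives $0=a_\lambda(f^-,f^-)=a(f^-,f^-)-\lambda\|f^-\|_{L^2(G)}^2\ge\bigl(\lambda_1(-\Delta_{\Vo})-\lambda\bigr)\|f^-\|_{L^2(G)}^2\ge 0$, and since $\lambda_1(-\Delta_{\Vo})-\lambda>0$ this forces $f^-=0$. No hypothesis on the edge lengths is used, in line with the statement of Theorem~\ref{thm:main-result}\ref{thm:main:basic}.

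The second step turns this into the signs of the matrix entries. Fix $k$ and let $f$ be the solution of \eqref{eq:boundary-value-problem} for $x=e_k$; then $f\ge 0$ and $f(v_j)=0$ for every $j\neq k$, so each such $v_j$ is a global minimum of $f$. Consequently, on every edge $e$ meeting $v_j$ the outer normal derivative of $f_e$ at $v_j$ is $\le 0$, and summing over these edges gives $[D_{\lambda,\Vo}]_{jk}=(\nu f)(v_j)\le 0$; this is the Metzler property. For the strict inequality when $(j,k)\in E_{\outerVertex}$ I would use the elementary fact that a nonnegative solution of $f_e''=-\lambda f_e$ on an edge which vanishes at an interior point, or has vanishing value and derivative at an endpoint, must vanish identically (uniqueness for the linear ODE). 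If $v_j$ and $v_k$ are joined by an edge $e$ of $G$, then $f_e$ cannot vanish in the interior of $e$ (else $f_e\equiv 0$, contradicting $f(v_k)=1$), so $f_e>0$ there and the outer normal derivative of $f_e$ at $v_j$ is strictly negative, which combined with the $\le 0$ contributions of the remaining edges at $v_j$ yields $(\nu f)(v_j)<0$. If instead $v_j$ and $v_k$ are joined by a path $v_k=w_0,w_1,\dots,w_p,v_j$ through $\Vi$, I would show by induction that $f(w_i)>0$: if $f(w_{i+1})=0$ for some $i$, then $w_{i+1}\in\Vi$ is a minimum of $f$, and the Kirchhoff condition $(\nu f)(w_{i+1})=0$ is violated, since the edge $w_iw_{i+1}$—on which $f$ is not identically zero, as $f(w_i)>0$—contributes a strictly negative term while every other edge at $w_{i+1}$ contributes $\le 0$. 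On the last edge $w_pv_j$ one then again has $f>0$ in the interior and hence $(\nu f)(v_j)<0$. In all cases $[D_{\lambda,\Vo}]_{jk}<0$ for $(j,k)\in E_{\outerVertex}$.

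Putting the pieces together, $-D_{\lambda,\Vo}$ is a Metzler matrix whose off-diagonal entries are positive exactly on the edge set of the connected graph $G_{\outerVertex}$, hence irreducible (Lemma~\ref{lem:irreducible-matrix}), and Lemma~\ref{lem:positive-semigroups} then yields that $(e^{-tD_{\lambda,\Vo}})_{t\ge 0}$ is strongly positive. The manipulations with $f^{\pm}$ and the one-edge ODE uniqueness are routine; the genuinely delicate point is the propagation in the third step, where one has to combine the Kirchhoff conditions at the inner vertices with the one-edge uniqueness statement to carry strict positivity of $f$ along a path and extract the strict sign of the normal derivative at $v_j$. I note that this propagation can be bypassed by a more algebraic argument: testing \eqref{eq:lambda-1-rayleigh} with $\sin(\pi x/L_e)$ extended by zero off a single edge $e$ shows $\lambda_1(-\Delta_{\Vo})\le(\pi/L_e)^2$ for every edge, whence $\beta_{kj}(\lambda)>0$ for all $(k,j)\in E$ by \eqref{eq:beta}, so $Q:=-D_{\lambda,V}\in\mathcal A(G)$ has $Q_{kj}>0$ on $E$; and the identity $y^{T}(-C)y=a_\lambda(h,h)$ for the solution $h$ with Dirichlet data $0$ on $\Vo$ and $y$ on $\Vi$, together with \eqref{eq:lambda-1-rayleigh}, gives $\spb(C)<0$, so Proposition~\ref{prop:properties-for-dtn-operator-with-inner-vertices}(ii) applies directly to $Q$ with Schur complement $-D_{\lambda,\Vo}$, giving the same conclusion.
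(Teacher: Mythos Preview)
Your proof is correct, and in fact you give two independent arguments, \emph{both} of which differ from the paper's.

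The paper proceeds purely variationally, following \cite{arendt:12:fei}: it writes $[D_{\lambda,\Vo}]_{kj}=a_\lambda(u_k,u_j)$, inserts the test function $w_{kj}:=\min(u_k,u_j)\in H^1_{\Vo}(G)$, and uses that $u_k-w_{kj}$ and $u_j-w_{kj}$ have complementary supports to deduce $[D_{\lambda,\Vo}]_{kj}\le(\lambda-\lambda_1)\|w_{kj}\|^2$. Your first route is a maximum-principle argument: a weak maximum principle from testing against $f^-$, then a Hopf-type strict sign via one-edge ODE uniqueness, with a propagation step through the inner vertices using the Kirchhoff conditions. This is more hands-on but entirely self-contained, and it yields as a by-product a strong maximum principle for the boundary value problem that the paper never states. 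Your second route is algebraic: the inequality $\lambda_1(-\Delta_{\Vo})\le(\pi/L_e)^2$ gives $\beta_{kj}(\lambda)>0$ directly, and the identity $y^T(-C)y=a_\lambda(h,h)$ together with \eqref{eq:lambda-1-rayleigh} forces $\spb(C)<0$, so Proposition~\ref{prop:properties-for-dtn-operator-with-inner-vertices}(ii) applies verbatim. This last argument is arguably the most natural fit with the paper's own machinery, and is exactly the style of reasoning used later in Proposition~\ref{prop:main-ii}. The paper's $\min(u_k,u_j)$ trick is shorter to write and generalises readily to domains in $\bbR^n$; your approaches stay closer to the graph structure and make the mechanism (positivity of solutions and of $\beta_{kj}$, resp.) more transparent.
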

\begin{proof}
  We have to show that if $\lambda<\lambda_1(-\Delta_{\Vo})$, then all off-diagonal elements of $-D_{\lambda,\Vo}$ are non-negative. Suppose that $\lambda\not\in\sigma(-\Delta_{\Vo})$ and let $u_k$ be the unique solution of \eqref{eq:boundary-value-problem} with $x=v_k\in\Vo$ which exists by Proposition~\ref{prop:boundary-value-problem-and-eigenvalue-problem}. It follows from \eqref{eq:dtn-bilinear-form} that for every $w\in H_{\Vo}^1(G)$ and $k\neq j$ we have that
  \begin{align*}
    [D_{\lambda,\Vo}]_{kj}
     & = (D_{\lambda,\Vo}(v_k),v_j)_{\bbC^{\Vo}} \\
     & = a_\lambda(u_k,u_j)
    = a_\lambda(u_k-w,u_j-w)-a_\lambda(w,w).
  \end{align*}
  By \eqref{eq:lambda-1-rayleigh}
  \begin{equation*}
    a_\lambda(w,w)
    = a(w,w)-\lambda\|w\|^2
    \geq \left(\lambda_1(-\Delta_{\Vo})-\lambda\right)\|w\|_{L^2(G)}^2
  \end{equation*}
  for all $w\in H_{\Vo}^1(G)$ and thus
  \begin{equation}
    \label{eq:off-diagonal-dtn-matrix}
    [D_{\lambda,\Vo}]_{kj}\leq\inf_{w\in H_{\Vo}^1(G)}\left[a_\lambda(u_k-w,u_j-w)+\left(\lambda-\lambda_1(-\Delta_{\Vo})\right)\|w\|_{L^2(G)}^2\right].
  \end{equation}
  As $k\neq j$ and $v_k,v_j\geq 0$ we have that $w_{kj}:=\min(u_k,u_j)\in H_{\Vo}^1(G)$, see \cite[Lemma~7.6]{gilbarg:01:epd}. Then $u_k-w_{kj}$ and $u_j-w_{kj}$ are zero on complementary sets and thus
  \begin{equation*}
    a_\lambda(u_k-w_{kj},u_j-w_{kj})=0.
  \end{equation*}
  Using \eqref{eq:off-diagonal-dtn-matrix} we deduce that if $\lambda<\lambda_1(-\Delta_{\Vo})$, then
  \begin{equation*}
    [D_{\lambda,\Vo}]_{kj}
    \leq \left(\lambda-\lambda_1(-\Delta_{\Vo})\right)\|w_{kj}\|_{L^2(G)}^2
    < 0
  \end{equation*}
  whenever $k\neq j$. As $G_{\outerVertex}$ is connected, $-D_{\lambda,\Vo}$ is irreducible and thus the semigroup generated by $-D_{\lambda,\Vo}$ is strongly positive by Lemma~\ref{lem:positive-semigroups}.
\end{proof}
We next prove Theorem~\ref{thm:main-result}\ref{thm:main:tree} for trees. The idea is that for the semigroup to be eventually positive, at least one off-diagonal entry of the generator needs to be negative. However, it also needs to have sufficiently many positive off-diagonal entries. We use a result from \cite{johnson:22:tsp} that this can never be achieved if the reduced graph $G_{\outerVertex}$ is a tree.
\begin{proposition}
  Let $G$ be a connected graph with set of outer vertices $\Vo$. Suppose that the reduced graph $G_{\outerVertex}$ is a tree. Then $(e^{-D_{\lambda,\Vo}})_{t\geq 0}$ is either positive or not eventually strongly positive.
\end{proposition}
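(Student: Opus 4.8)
The plan is to combine the structural information about $D_{\lambda,\Vo}$ from Proposition~\ref{prop:properties-for-dtn-operator-with-inner-vertices} with a sign-pattern result for eventually positive matrices from \cite{johnson:22:tsp}. The key observation is that eventual strong positivity of $(e^{-tD_{\lambda,\Vo}})_{t\ge 0}$ forces a rigid combinatorial constraint on the sign pattern of the off-diagonal entries of $-D_{\lambda,\Vo}$, and this constraint is incompatible with $G_{\outerVertex}$ being a tree unless \emph{all} the relevant off-diagonal entries are nonnegative, in which case the semigroup is already positive.

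First I would dispose of the trivial cases $\lambda < \lambda_1(-\Delta_{\Vo})$ (handled by Proposition~\ref{prop:positivity-less-spb}, giving strong positivity) and $\lambda \in \sigma(-\Delta_{\Vo})$ (where $D_{\lambda,\Vo}$ is undefined), so we may assume $\lambda \in \bbR \setminus \sigma(-\Delta_{\Vo})$ with $\lambda \ge \lambda_1(-\Delta_{\Vo})$. By Proposition~\ref{prop:main}\ref{item:main:adjacency-class} we have $D_{\lambda,\Vo} \in \mathcal A(G_{\outerVertex})$, so the only possibly-nonzero off-diagonal entries of $-D_{\lambda,\Vo}$ sit at positions $(r,s)$ with $(r,s) \in E_{\outerVertex}$. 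Now assume, for contradiction, that the semigroup is eventually strongly positive but not positive: then some entry $[-D_{\lambda,\Vo}]_{rs}$ with $(r,s)\in E_{\outerVertex}$ is strictly negative. The next step is to invoke the characterisation of eventually positive matrices for tree sign patterns: by \cite[Theorem~1]{johnson:22:tsp} (or the special case \cite[Theorem~6.4]{berman:10:spe}), when the graph of the off-diagonal sign pattern is a tree, the matrix exponential is eventually (entrywise) positive if and only if every off-diagonal entry on an edge of that tree is strictly positive — intuitively, a tree has no ``redundant'' paths to compensate for a missing or wrong-signed edge, so each edge must carry a positive entry. Since $G_{\outerVertex}$ is a tree and $D_{\lambda,\Vo}\in\mathcal A(G_{\outerVertex})$, the sign-pattern graph of $-D_{\lambda,\Vo}$ is a subgraph of a tree, hence a forest, and the same conclusion applies on each component; but a strictly negative entry on an edge contradicts this. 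Therefore no such negative entry exists, so all off-diagonal entries of $-D_{\lambda,\Vo}$ are nonnegative, which by Lemma~\ref{lem:positive-semigroups} (together with the analyticity/irreducibility structure, or directly) makes $(e^{-tD_{\lambda,\Vo}})_{t\ge 0}$ positive — in fact strongly positive if $-D_{\lambda,\Vo}$ is irreducible. This is the desired dichotomy.

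The main obstacle is getting the citation to \cite{johnson:22:tsp} to do precisely the work needed: the cited theorems are usually phrased for \emph{eventually positive matrices} $M$ (meaning $M^k > 0$ for large $k$) rather than for eventually positive matrix \emph{exponentials} $e^{tM}$, so a small translation is required. The standard bridge is that $e^{tM}$ is eventually (entrywise) positive if and only if $M + cI$ is an eventually positive matrix for some (equivalently, all large) $c > 0$; this is a routine spectral-theory fact (see e.g. \cite{noutsos:08:rhn}) and adding $cI$ does not change the off-diagonal sign pattern, so the tree characterisation transfers verbatim. A secondary, purely bookkeeping point is that $G_{\outerVertex}$ being a tree does not immediately mean the sign-pattern graph of $-D_{\lambda,\Vo}$ equals that tree — some entries $[-D_{\lambda,\Vo}]_{rs}$ with $(r,s)\in E_{\outerVertex}$ could vanish due to cancellation (as flagged in the Remark following Proposition~\ref{prop:properties-for-dtn-operator-with-inner-vertices}) — but this only makes the sign-pattern graph a \emph{subforest} of a tree, which is still a forest, and the argument (applied componentwise) goes through unchanged; indeed if any such entry vanishes the semigroup cannot be irreducible, hence cannot be strongly positive, but it may still be positive, and it certainly cannot be eventually strongly positive, so the stated dichotomy still holds.
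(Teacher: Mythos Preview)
Your proposal is correct and follows essentially the same approach as the paper: both arguments combine $D_{\lambda,\Vo}\in\mathcal A(G_{\outerVertex})$ with the tree sign-pattern criterion from \cite{johnson:22:tsp}, bridged to the semigroup setting via the equivalence in Lemma~\ref{lem:eventually-positive-semigroups}(i) (which is precisely your ``$M+cI$ is eventually positive'' translation). Your treatment is in fact slightly more careful than the paper's, since you explicitly address the possibility that some entries on $E_{\outerVertex}$ vanish by cancellation, yielding a subforest rather than the full tree; the paper glosses over this, but as you note the Johnson criterion still applies and the dichotomy survives.
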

\begin{proof}
  Let $\lambda\in\mathbb R\setminus\sigma(-\Delta_{\Vo})$. We know from Proposition~\ref{prop:properties-for-dtn-operator-with-inner-vertices} that the potentially non-trivial off-diagonal entries of $-D_{\lambda,\Vo}$ are exactly those of the adjacency matrix of $G_{\outerVertex}$. All other off-diagonal entries are zero. For the semigroup to be eventually positive without being positive, we need at least one negative off-diagonal entry, that is, two by the symmetry. Since removing a positive entry is equivalent to removing an edge from the graph, it follows that the positive part of $\omega I-D_{\lambda,\Vo}$ cannot have the pattern determined by the entries of the adjacency matrix of a tree. It follows from \cite[Theorem~1]{johnson:22:tsp} that $\omega I-D_{\lambda,\Vo}$ cannot be eventually positive, that is, there does not exist $k\in\mathbb N$ such that $(\omega I-D_{\lambda,\Vo})^{-1}\gg 0$. Hence, according to Lemma~\ref{lem:eventually-positive-semigroups}(i) the semigroup $(e^{-D_{\lambda,\Vo}})_{t\geq 0}$ cannot be eventually strongly positive without being positive, completing the proof of the proposition.
\end{proof}

Now we consider the more delicate case when $\lambda > \lambda_1(-\Delta_{\Vo})$. We complete the proof of Theorem~\ref{thm:main-result} in a sequence of propositions. We start by proving part~\ref{thm:main:positive}.
\begin{proposition}
  \label{prop:main-ii}
  Let the assumptions of Theorem~\ref{thm:main-result}\ref{thm:main:positive} be satisfied. Then for every $\hat\lambda\in\mathbb R$ there exists $\lambda>\hat\lambda$ such that $(e^{-D_{\lambda,\Vo}})_{t\geq 0}$ is strongly positive.
\end{proposition}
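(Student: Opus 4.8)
The plan is to use the limit machinery from Section~\ref{sec:limit-theorems}, specifically Corollary~\ref{cor:ergodic}, combined with the Schur complement formula from Proposition~\ref{prop:main}\ref{item:main:formula-for-dtn-operator-with-inner-vertices} and the positivity criterion for matrix semigroups (strong positivity is equivalent to all off-diagonal entries of $-D_{\lambda,\Vo}$ being nonnegative, together with irreducibility, via Lemma~\ref{lem:positive-semigroups}). The idea is that by choosing all $\gamma_{kj} = 1$ for $(k,j) \in E$ in Corollary~\ref{cor:ergodic}, we obtain a sequence $\lambda_\ell \to \infty$ with $\frac{1}{\ell\sqrt{\lambda_\ell}} D_{\lambda_\ell,V} \to -L_G$, the negative of the graph Laplacian. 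The matrix $L_G$ has nonnegative diagonal and nonpositive off-diagonal entries; equivalently $-L_G$ has the sign pattern of a generator of a positive semigroup, with strictly positive off-diagonal entries exactly on $E$.

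First I would fix an arbitrary $\hat\lambda \in \bbR$ and apply Corollary~\ref{cor:ergodic} with $\gamma_{kj} = 1$ for every $(k,j) \in E$ to obtain $(\lambda_\ell)_{\ell \in \bbN}$ diverging to $\infty$ (so that $\lambda_\ell > \hat\lambda$ for $\ell$ large, and also eventually $\lambda_\ell \notin \sigma(-\Delta_V) \supseteq \sigma(-\Delta_{\Vo})$ since that spectrum is discrete) with
\begin{equation*}
  \frac{1}{\ell\sqrt{\lambda_\ell}} D_{\lambda_\ell,V} \longrightarrow -L_G = D_G - A_G.
\end{equation*}
Next I would pass to the Schur complement. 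Write $-\frac{1}{\ell\sqrt{\lambda_\ell}} D_{\lambda_\ell,V}$ in block form $\begin{bmatrix} A_\ell & B_\ell^T \\ B_\ell & C_\ell \end{bmatrix}$ as in \eqref{eq:DN-V} (rescaled), so that $A_\ell \to A_\infty$, $B_\ell \to B_\infty$, $C_\ell \to C_\infty$ where $\begin{bmatrix} A_\infty & B_\infty^T \\ B_\infty & C_\infty \end{bmatrix} = L_G$. The block $C_\infty$ is the principal submatrix of $L_G$ on the inner vertices $\Vi$; since $G$ is connected and $\Vo \neq \emptyset$, this submatrix is invertible (indeed it is the generator-type matrix of a graph with some vertices "grounded", hence negative definite — this is exactly the content that $-C_\infty$ is a nonsingular $M$-matrix). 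By continuity of matrix inversion on a neighbourhood of $C_\infty$, for $\ell$ large $C_\ell$ is invertible and the rescaled Schur complements converge:
\begin{equation*}
  -\frac{1}{\ell\sqrt{\lambda_\ell}} D_{\lambda_\ell,\Vo} = A_\ell + B_\ell^T(-C_\ell)^{-1}B_\ell \longrightarrow A_\infty + B_\infty^T(-C_\infty)^{-1}B_\infty.
\end{equation*}
The limit on the right is precisely $-L_{G_{\outerVertex}}$-like: it equals the Schur complement of the block $C_\infty$ in $L_G$, which by Proposition~\ref{prop:properties-for-dtn-operator-with-inner-vertices}(ii) (applied with $Q = L_G$, whose off-diagonal entries on $E$ are the positive entries of $A_G$, and $\spb(C_\infty) < 0$) has strictly positive off-diagonal entries exactly on $E_{\outerVertex}$. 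Hence this limit matrix has strictly positive off-diagonal entries wherever $(r,s) \in E_{\outerVertex}$.

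Finally I would conclude by a sign-stability argument: since the rescaled matrices $-\frac{1}{\ell\sqrt{\lambda_\ell}} D_{\lambda_\ell,\Vo}$ converge to a matrix whose off-diagonal entries are strictly positive on $E_{\outerVertex}$ and zero off $E_{\outerVertex}$ (the zero entries being forced exactly, not merely in the limit, by Proposition~\ref{prop:main}\ref{item:main:adjacency-class}, which says $D_{\lambda,\Vo} \in \mathcal A(G_{\outerVertex})$ for every admissible $\lambda$), for $\ell$ sufficiently large all off-diagonal entries of $-\frac{1}{\ell\sqrt{\lambda_\ell}}D_{\lambda_\ell,\Vo}$, and hence of $-D_{\lambda_\ell,\Vo}$ itself, are nonnegative — in fact strictly positive on $E_{\outerVertex}$. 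Combined with the connectedness of $G_{\outerVertex}$ (so $-D_{\lambda_\ell,\Vo}$ is irreducible), Lemma~\ref{lem:positive-semigroups} gives that $(e^{-t D_{\lambda_\ell,\Vo}})_{t \ge 0}$ is strongly positive. Choosing $\ell$ large enough that also $\lambda_\ell > \hat\lambda$ and $\lambda_\ell \notin \sigma(-\Delta_{\Vo})$ completes the proof. The main obstacle I anticipate is the bookkeeping in passing from the convergence of $D_{\lambda_\ell,V}$ to convergence of the Schur complement $D_{\lambda_\ell,\Vo}$ — one must verify that $C_\infty$ is invertible (equivalently $\spb(C_\infty) < 0$), which uses connectedness of $G$ and nonemptiness of $\Vo$, and that the relevant positivity of the limiting Schur complement off-diagonal entries is exactly governed by $E_{\outerVertex}$; both of these are supplied by Proposition~\ref{prop:properties-for-dtn-operator-with-inner-vertices}, so the argument is really just assembling the pieces correctly rather than proving anything genuinely new.
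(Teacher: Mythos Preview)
Your approach is essentially identical to the paper's: apply Corollary~\ref{cor:ergodic} with all $\gamma_{kj}=1$ to get convergence of the rescaled $D_{\lambda_\ell,V}$ to $-L_G$, then pass to the Schur complement via Proposition~\ref{prop:properties-for-dtn-operator-with-inner-vertices}(ii) and conclude strong positivity from Lemma~\ref{lem:positive-semigroups}. The paper argues $\spb(C_\infty)<0$ explicitly via Gershgorin on each irreducible block of $C$ (using that the row sums of $L_G$ vanish and connectedness forces a strict inequality in at least one row), whereas you assert this more heuristically as ``grounded Laplacian, hence negative definite''; both are fine.

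One small slip: the inclusion $\sigma(-\Delta_V)\supseteq\sigma(-\Delta_{\Vo})$ is not correct in general (these are different self-adjoint operators), and discreteness alone does not guarantee that a sequence $\lambda_\ell\to\infty$ avoids a given discrete set. However, the conclusion $\lambda_\ell\notin\sigma(-\Delta_{\Vo})$ for large $\ell$ does follow from your own argument: since $C_\ell\to C_\infty$ with $C_\infty$ invertible, $C_\ell$ is invertible for large $\ell$, and by Proposition~\ref{prop:main}\ref{item:main:formula-for-dtn-operator-with-inner-vertices} this is equivalent to $\lambda_\ell\notin\sigma(-\Delta_{\Vo})$. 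So the proof goes through once you drop the spurious inclusion and rely on this instead.
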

\begin{proof}
  By Corollary~\ref{cor:ergodic} there exists a sequence $(\lambda_\ell)_{\ell\in\mathbb N}$ such that
  \begin{equation*}
    -L_G=\lim_{\ell\to\infty}\frac{1}{\ell\sqrt{\lambda_\ell}}D_{\lambda_\ell,V}.
  \end{equation*}
  As $G$ is connected, the matrix $L_G$ is irreducible by Lemma~\ref{lem:positive-semigroups}. Hence there exists $\ell_0\in\mathbb N$ such that $-D_{\lambda_\ell,V}$ is irreducible with non-negative off-diagonal elements for all $\ell\geq\ell_0$. Therefore $(e^{-tD_{\lambda_\ell,V}})_{t>0}$ is strongly positive for all $\ell\geq\ell_0$ again by Lemma~\ref{lem:positive-semigroups}.

  Now let $\Vo$ be a proper subset of $V$. Consider the block decomposition of $L_G$ of the form \eqref{eq:DN-V}. Without loss of generality we renumber the vertices and edges so that $A$ and $C$ have the form \eqref{eq:A-C-block} with each block irreducible. We note that the row and column sums of $L_G$ are zero. As the graph $G$ is connected, the row sums in each of the blocks $C_j$ are less than or equal to zero, with at least one of them negative. The Gershgorin disk theorem implies that $\spb(C_j)<0$ for each of the blocks. By Proposition~\ref{prop:properties-for-dtn-operator-with-inner-vertices}, the matrix $M$ given by \eqref{eq:M} is irreducible with non-negative off-diagonal elements. As the spectral bound is a continuous function of the matrix entries, there exists $\ell_0\in\mathbb N$ such that the same is true for $-D_{\lambda_\ell,\Vo}$ for all $\ell\geq\ell_0$. Therefore $(e^{-tD_{\lambda_\ell,\Vo}})_{t>0}$ is strongly positive for all $\ell\geq\ell_0$ by Lemma~\ref{lem:positive-semigroups}.
\end{proof}
The next proposition proves Theorem~\ref{thm:main-result}\ref{thm:main:nonpos}.
\begin{proposition}
  \label{prop:main-iii}
  Assume that $|\Vo| \ge 2$. Then, for each $\hat \lambda \in \bbR$, there exists $\lambda > \hat \lambda$ such that the semigroup $(e^{-tD_{\lambda, \Vo}})_{t \ge 0}$ is not eventually positive and thus, in particular, not positive.
\end{proposition}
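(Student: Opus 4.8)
The plan is to produce, for arbitrarily large $\lambda$, a matrix $-D_{\lambda,\Vo}$ that is \emph{not} diagonal and all of whose off-diagonal entries are non-positive, and then to quote Lemma~\ref{lem:positive-matrix-group} to conclude that the semigroup cannot be eventually positive. The mechanism behind that last step is the usual positive-group obstruction: if every off-diagonal entry of $-D_{\lambda,\Vo}$ is $\le 0$, then $D_{\lambda,\Vo}$ has non-negative off-diagonal entries, so $e^{-tD_{\lambda,\Vo}}\ge0$ for every $t\le 0$; were the semigroup positive for all $t\ge t_0$ (some $t_0\ge0$), then $e^{-tD_{\lambda,\Vo}}=e^{-(t+t_0)D_{\lambda,\Vo}}e^{t_0D_{\lambda,\Vo}}\ge 0$ for all $t\ge0$ as well, so $(e^{-tD_{\lambda,\Vo}})_{t\in\bbR}$ would be a positive group and $-D_{\lambda,\Vo}$ would have to be diagonal, a contradiction.

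To find such $\lambda$ I would run the limit theorem of Section~\ref{sec:limit-theorems} with the parameters chosen with the sign opposite to that used in the proof of Proposition~\ref{prop:main-ii}. Concretely, apply Corollary~\ref{cor:ergodic} with $\gamma_{kj}:=-1$ for every $(k,j)\in E$; since the limiting matrix then has off-diagonal entries $-1/\gamma_{kj}=1$, it is the graph Laplacian $L_G=A_G-D_G$, and one gets a sequence $\lambda_\ell\to\infty$ with $\tfrac{1}{\ell\sqrt{\lambda_\ell}}D_{\lambda_\ell,V}\to L_G$. Decomposing $-D_{\lambda_\ell,V}$ in the block form~\eqref{eq:DN-V} with inner block $C^{(\ell)}$, I would argue, just as in the proof of Proposition~\ref{prop:main-ii}, that $\spb\bigl((L_G)_C\bigr)<0$ for the inner block of $L_G$: after the renumbering of Section~\ref{sec:matrix-representation} each irreducible diagonal block of $(L_G)_C$ is diagonally dominant with at least one strictly negative row sum (because $G$ is connected and $\Vo\ne\emptyset$), so the Gershgorin disc theorem applies. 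In particular the scaled inner block $\tfrac{1}{\ell\sqrt{\lambda_\ell}}C^{(\ell)}\to -(L_G)_C$, which is invertible, so that $-D_{\lambda_\ell,\Vo}$ is well defined for large $\ell$ and the scaled Schur complements converge, $\tfrac{1}{\ell\sqrt{\lambda_\ell}}(-D_{\lambda_\ell,\Vo})\to -M$, where $M$ is the Schur complement~\eqref{eq:M} of the inner block in $L_G$.

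Finally, Proposition~\ref{prop:properties-for-dtn-operator-with-inner-vertices}(ii), applied to $Q:=L_G\in\mathcal A(G)$ --- whose edge entries $(L_G)_{kj}=1$ are positive and which satisfies $\spb\bigl((L_G)_C\bigr)<0$ --- yields $M\in\mathcal A(G_{\outerVertex})$ with $M_{rs}>0$ for every $(r,s)\in E_{\outerVertex}$. Since $-D_{\lambda_\ell,\Vo}\in\mathcal A(G_{\outerVertex})$ by Proposition~\ref{prop:main}\ref{item:main:adjacency-class} its off-diagonal entries outside $E_{\outerVertex}$ vanish identically, while continuity forces $[-D_{\lambda_\ell,\Vo}]_{rs}<0$ for $(r,s)\in E_{\outerVertex}$ once $\ell$ is large; and because $G_{\outerVertex}$ is connected on $|\Vo|\ge2$ vertices, $E_{\outerVertex}\ne\emptyset$, so $-D_{\lambda_\ell,\Vo}$ is non-diagonal with only non-positive off-diagonal entries. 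Lemma~\ref{lem:positive-matrix-group} then gives that $(e^{-tD_{\lambda_\ell,\Vo}})_{t\ge0}$ is not eventually positive, a fortiori not positive; choosing $\ell$ so large that $\lambda_\ell>\hat\lambda$ and the above sign pattern holds, and setting $\lambda:=\lambda_\ell$, completes the proof. I expect the only genuinely delicate point to be the convergence of the Schur complements $\tfrac{1}{\ell\sqrt{\lambda_\ell}}D_{\lambda_\ell,\Vo}$ --- i.e. that the Schur complement operation commutes with the scaled limit here --- which rests entirely on the estimate $\spb\bigl((L_G)_C\bigr)<0$; everything else is bookkeeping once Corollary~\ref{cor:ergodic}, Proposition~\ref{prop:properties-for-dtn-operator-with-inner-vertices}, and Lemma~\ref{lem:positive-matrix-group} are available.
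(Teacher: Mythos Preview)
Your proposal is correct and follows essentially the same route as the paper: choose $\gamma_{kj}=-1$ in Corollary~\ref{cor:ergodic} so that the scaled $D_{\lambda_\ell,V}$ converges to $L_G$, pass to the Schur complement via Proposition~\ref{prop:properties-for-dtn-operator-with-inner-vertices}(ii) (the paper abbreviates this as ``proceed as in the proof of Proposition~\ref{prop:main-ii}''), and conclude with Lemma~\ref{lem:positive-matrix-group}. Your write-up is in fact more explicit than the paper's, spelling out both the Schur-complement limit and the positive-group obstruction that the paper leaves to the reader.
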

\begin{proof}
  Choosing $\gamma_{k,j}=-1$ in Corollary~\ref{cor:ergodic} for all $(k,j)\in E$ there exists a sequence $(\lambda_\ell)_{\ell\in\mathbb N}$ such that
  \begin{equation*}
    L_G=\lim_{\ell\to\infty}\frac{1}{\ell\sqrt{\lambda_\ell}}D_{\lambda_\ell,V}.
  \end{equation*}
  As $G$ is connected $L_G$ is irreducible. If we proceed as in the proof of Proposition~\ref{prop:main-ii} with $-D_{\lambda_\ell,\Vo}$ replaced by $D_{\lambda,\Vo}$, then we conclude that $D_{\lambda,\Vo}$ is irreducible with non-negative off-diagonal elements for $\ell$ large enough. Hence $(e^{-tD_{\lambda_\ell,\Vo}})_{t>0}$ cannot be a positive or eventually positive semigroup by Lemma~\ref{lem:positive-matrix-group} if $|\Vo|\geq 2$.
\end{proof}
We finally prove Theorem~\ref{thm:main-result}\ref{thm:main:evpos}.
\begin{proposition}
  \label{prop:main-iv}
  Assume the reduced graph $G_{\outerVertex}$ has a cycle. Then, for each $\hat \lambda \in \bbR$, there exists $\lambda > \hat \lambda$ such that the semigroup $(e^{-tD_{\lambda, \Vo}})_{t \ge 0}$ is eventually strongly positive, but not positive.
\end{proposition}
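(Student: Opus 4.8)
The plan is to run the same limiting machinery as in Proposition~\ref{prop:main-ii}, but now with the family $(\gamma_{kj})_{(k,j)\in E}$ tuned so that the limit matrix is a negative semidefinite ``Kron‑reduced Laplacian'' modified by one small rank‑one perturbation that turns a single off‑diagonal entry negative, and then to transfer these properties to $-D_{\lambda_\ell,\Vo}$ by continuity. I will use two criteria for a symmetric matrix $N\in\bbR^{\Vo\times\Vo}$: the semigroup $(e^{tN})_{t\ge0}$ is eventually strongly positive iff $\max\sigma(N)$ is a simple eigenvalue with a strictly positive eigenvector — equivalently, the spectral projection onto $\max\sigma(N)$ is strongly positive (see Lemma~\ref{lem:eventually-positive-semigroups} and \cite[Theorem~5.4]{daners:16:eps}) — while $(e^{tN})_{t\ge0}$ fails to be positive as soon as some off‑diagonal entry of $N$ is strictly negative. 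Both conditions are open in $N$ (the first because a simple isolated eigenvalue and its eigenvector depend continuously on the matrix), so it suffices to produce $(\gamma_{kj})$ and a sequence $\lambda_\ell\to\infty$ for which $\tilde M:=\lim_{\ell\to\infty}\tfrac1{\ell\sqrt{\lambda_\ell}}(-D_{\lambda_\ell,\Vo})$ exists, has a simple largest eigenvalue with a strictly positive eigenvector, and has some strictly negative off‑diagonal entry; the same then holds for $-D_{\lambda_\ell,\Vo}$ for all large $\ell$, proving the proposition.

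As in the proof of Proposition~\ref{prop:main-ii}, $\tilde M$ is the Schur complement with respect to $\Vi$ of the limit $-Q$ of $\tfrac1{\ell\sqrt{\lambda_\ell}}(-D_{\lambda_\ell,V})$, namely the symmetric matrix with off‑diagonal entries $1/\gamma_{kj}$ on the edges of $G$ and vanishing row sums. When all $\gamma_{kj}>0$ this $-Q$ is the negative of a weighted graph Laplacian of $G$, hence negative semidefinite with kernel $\bbR\one$ on $V$; its Schur complement is then negative semidefinite with kernel $\bbR\one$ on $\Vo$, simple top eigenvalue $0$ separated from the rest by a spectral gap, and (by Proposition~\ref{prop:properties-for-dtn-operator-with-inner-vertices}) all off‑diagonal entries strictly positive on $E_{\outerVertex}$ — the situation of Proposition~\ref{prop:main-ii}, which already gives strong positivity. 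To create a negative entry, fix a cycle of $G_{\outerVertex}$ and an edge $(r,s)$ on it, so that $E_{\outerVertex}\setminus\{(r,s)\}$ still connects $\Vo$. In the cleanest case, where $(r,s)$ is a genuine edge of $G$ along which $v_r$ and $v_s$ are not also joined by a path through $\Vi$, take $\gamma_{rs}$ to be a fixed negative number of small absolute value and $\gamma_{kj}=1$ otherwise; then $-Q=-Q_0+|1/\gamma_{rs}|(e_r-e_s)(e_r-e_s)^T$, where $-Q_0$ is the negative of the Laplacian of the (still connected) graph $G$ with the edge $(r,s)$ deleted, so the Schur complement is $\tilde M=\tilde M_0+|1/\gamma_{rs}|(e_r-e_s)(e_r-e_s)^T$ with $\tilde M_0$ negative semidefinite, kernel $\bbR\one$. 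Since $e_r-e_s\perp\one$, the vector $\one$ stays in the kernel of $\tilde M$; if $|1/\gamma_{rs}|$ is small compared with the spectral gap of $\tilde M_0$ the kernel stays one‑dimensional and $0$ remains the top eigenvalue, so $\tilde M$ has a simple largest eigenvalue with eigenvector $\one\gg0$. And $[\tilde M]_{rs}=[\tilde M_0]_{rs}-|1/\gamma_{rs}|=0-|1/\gamma_{rs}|<0$, since the deleted edge left no link between $v_r$ and $v_s$ in the reduced graph of the new graph.

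The remaining cases — $v_r$ and $v_s$ also linked through $\Vi$, or linked \emph{only} through $\Vi$, the star graph of Example~\ref{ex:star-graph} being the prototype — are treated by the same device applied to an inner‑facing edge rather than to $(r,s)$ itself. One localises to an inner component $\tilde G$ of $G[\Vi]$ through which $v_r$ reaches $v_s$ and gives the edge joining $v_r$ to $\tilde G$ a fixed small negative $\gamma$, keeping $\gamma\equiv1$ elsewhere. On passing to the Schur complement this again adds a rank‑one positive semidefinite bump supported on $\Vo$, so $\one$ remains in the kernel of $\tilde M$ and, for a small enough bump, $\tilde M$ keeps its simple top eigenvalue $0$ and its positive eigenvector; and the bump decreases precisely the entries of $\tilde M$ on the edges issuing from $v_r$ within the $\tilde G$‑clique, one of which is the cycle edge $(r,s)$ — driven strictly negative once the bump outweighs the bounded Kron weight it must cancel. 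For the star this is the explicit limit $\tfrac1{\ell\sqrt{\lambda_\ell}}(-D_{\lambda_\ell,\Vo})\to -\sum_{j\ne r}c_j\,(e_r-e_j)(e_r-e_j)^T$ with $c_j=1/\gamma_{v_jw}>0$ (obtained by letting the dominant coefficient $\sigma=\sum_i\alpha_{v_iw}(\lambda_\ell)$ tend to $-\infty$ in $-D_{\lambda,\Vo}=\operatorname{diag}(-\alpha_{v_iw})+\sigma^{-1}\mathbf b\mathbf b^{\,T}$), a negative semidefinite matrix with kernel $\bbR\one$ and strictly negative entries between the leaves other than $v_r$.

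The main obstacle is this last piece of bookkeeping: one has to confirm quantitatively that the rank‑one bump can be chosen small enough to preserve the spectral gap at the top of $\tilde M_0$ — hence the simplicity of the top eigenvalue and the strict positivity of its eigenvector — while still large enough to overpower the Kron weight of the chosen cycle edge, and that localising the construction to a single inner component and a single inner‑facing edge does not disturb the connectedness that makes the kernel of $\tilde M_0$ one‑dimensional. Once the limiting negative semidefinite model and the sign of the subleading entry $[-D_{\lambda_\ell,\Vo}]_{rs}$ are pinned down, the remainder is continuity together with the two matrix criteria recalled above.
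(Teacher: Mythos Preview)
Your strategy coincides with the paper's: choose $(\gamma_{kj})$ in Corollary~\ref{cor:ergodic} so that the rescaled limit of $-D_{\lambda_\ell,\Vo}$ has one strictly negative off-diagonal entry but still generates an eventually strongly positive semigroup, then transfer by openness. The execution differs. The paper tunes $\gamma_{rs}^{(\varepsilon)}$ so that the $(r,s)$ entry of the Schur-complement limit equals \emph{exactly} $-\varepsilon$ (all other $\gamma_{kj}=1$); since changing $\gamma_{rs}$ only touches the $A$ block, this cancels whatever $[B^T(-C)^{-1}B]_{rs}$ contributes, regardless of whether $v_r,v_s$ are also linked through $\Vi$. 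Letting $\varepsilon\to0$ produces $Q_{\Vo}^{(0)}$ with non-negative off-diagonal entries and the sign pattern of the connected graph $G_{\outerVertex}-(r,s)$, so it generates a strongly positive semigroup by Lemma~\ref{lem:positive-semigroups}; then Lemma~\ref{lem:eventually-positive-semigroups}(ii) is applied twice (in $\varepsilon$, then in $\ell$). No spectral-gap estimate enters. Your route instead verifies the Perron property by a direct rank-one perturbation of the Kron-reduced Laplacian, which forces you into the quantitative balance you yourself flag as the ``main obstacle''.

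Two concrete problems. First, ``$\gamma_{rs}$ a fixed negative number of small absolute value'' makes $|1/\gamma_{rs}|$ large --- the opposite of what you then require; you want $|\gamma_{rs}|$ large. Second, your ``remaining cases'' sketch is incorrect as written: giving an inner-facing edge $(r,j)$ with $j\in\Vi$ a negative $\gamma$ perturbs the $B$ \emph{and} $C$ blocks of $-Q$, and the induced change in the Schur complement $A+B^T(-C)^{-1}B$ is not a rank-one positive semidefinite bump on $\Vo$ --- the Sherman--Morrison update of $(-C)^{-1}$ feeds back into every entry of $B^T(-C)^{-1}B$. What survives is that $\one_{\Vo}$ remains in the kernel (since $-Q$ still has zero row sums), but showing that $0$ stays the \emph{top} eigenvalue needs more than you give. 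The paper's ``tune to $-\varepsilon$, then $\varepsilon\to0$'' device is precisely what sidesteps this bookkeeping; adopt it in place of the explicit spectral perturbation.
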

\begin{proof}
  Suppose $e_{\outerVertex}=(r,s)$ is an edge in a cycle of $G_{\outerVertex}$. Let $\tilde G:=G_{\outerVertex}- e_{\outerVertex}$. Since $e_{\outerVertex} $ is part of a cycle, $\tilde G$ is still connected. Hence, the corresponding graph Laplacian $L_{\tilde G}$ is irreducible by Lemma~\ref{lem:positive-semigroups}. Consider the block decomposition of $L_{\tilde G}$ of the form \eqref{eq:DN-V}, that is,
  \begin{equation*}
    L_{\tilde G}=
    \begin{bmatrix}
      \tilde A & \tilde B^T \\
      \tilde B & \tilde C
    \end{bmatrix}
    .
  \end{equation*}
  As argued in the proof of Proposition~\ref{prop:main-ii}, $\spb(\tilde C)<0$ and $[\tilde B^T(-\tilde C)^{-1}\tilde B]_{rs}\geq 0$. Given $\varepsilon>0$ we set
  \begin{equation*}
    \gamma_{rs}^{(\varepsilon)}:= -\dfrac{1}{\varepsilon+[\tilde B^T(-\tilde C)^{-1}\tilde B]_{rs}}
  \end{equation*}
  If $(r,s)\neq (k,j)\in E$ we set $\gamma_{kj}=1$. By Corollary~\ref{cor:ergodic} there exists a sequence $(\lambda_\ell)_{\ell\in\mathbb N}$ such that
  \begin{equation*}
    Q^{(\varepsilon)}=-\lim_{\ell\to\infty}\frac{1}{\ell\sqrt{\lambda_\ell}}D_{\lambda_\ell,V}
  \end{equation*}
  with
  \begin{equation*}
    [Q^{(\varepsilon)}]_{kj}=
    \begin{cases}
      -\varepsilon-[\tilde B^T(-\tilde C)^{-1}\tilde B]_{rs} & \text{if }(k,j)=(r,s)                         \\
      [L_{\tilde G}]_{kj}                                    & \text{if }k\neq j\text{ and }(k,j)\neq (r,s).
    \end{cases}
  \end{equation*}
  Moreover, by Corollary~\ref{cor:ergodic} the diagonal elements of $Q^{(\varepsilon)}$ are such that the row and column sums of $Q^{(\varepsilon)}$ are zero. We furthermore note that the off-diagonal entries of $Q^{(\varepsilon)}$ and $L_{\tilde G}$ coincide except for the entry $(r,s)$. In particular, if we consider the block decomposition of $Q^{(\varepsilon)}$ of the form \eqref{eq:DN-V}, that is,
  \begin{equation*}
    Q^{(\varepsilon)}=
    \begin{bmatrix}
      A_\varepsilon & B_\varepsilon^T \\B_\varepsilon&C_\varepsilon
    \end{bmatrix}
    ,
  \end{equation*}
  then $B_\varepsilon=\tilde B$ and $C_\varepsilon=\tilde C$ for all $\varepsilon>0$. Hence,
  \begin{equation*}
    Q^{(\varepsilon)}=
    \begin{bmatrix}
      A_\varepsilon & \tilde B^T \\\tilde B&\tilde C
    \end{bmatrix}
  \end{equation*}
  for all $\varepsilon>0$. By choice of $\gamma_{rs}$ we have that
  \begin{align*}
    [A_\varepsilon+B_\varepsilon^T(-C_\varepsilon)B_\varepsilon]_{rs} & =\frac{1}{\gamma_{rs}}+[B^T(-C)^{-1}B]_{rs}                                                                    \\
                                                                      & =-\varepsilon-[\tilde B^T(-\tilde C)^{-1}\tilde B]_{rs}+[\tilde B^T(-\tilde C)^{-1}\tilde B]_{rs}=-\varepsilon
  \end{align*}
  and therefore
  \begin{equation*}
    Q_{\Vo}^{(\varepsilon)}=-\lim_{\ell\to\infty}\frac{1}{\ell\sqrt{\lambda_\ell}}D_{\lambda_\ell,\Vo}=A_\varepsilon+B^T(-C)B
  \end{equation*}
  with $[Q_{\Vo}^{(\varepsilon)}]_{rs}=-\varepsilon$ and $\sign[Q_{\Vo}^{(\varepsilon)}]_{kj}=\sign[L_{G_{\Vo}}]_{kj}$ for all $(k,j)$ with $k\neq j$, $(k,j)\neq(r,s)$ and all $\varepsilon>0$. Clearly,
  \begin{equation*}
    Q_{\Vo}^{(0)}:=\lim_{\varepsilon\to 0}Q_{\Vo}^{(\varepsilon)}
  \end{equation*}
  has non-negative off-diagonal elements. As $\tilde G$ is connected, $L_{\tilde G}$ is irreducible. Hence, also $Q_{\Vo}^{(0)}$ is irreducible and thus generates a strongly positive semigroup $(e^{tQ_{\Vo}})_{t\geq 0}$. The set of generators of strongly eventually positive matrix semigroups is open as shown in Lemma~\ref{lem:eventually-positive-semigroups}(ii) and thus $(e^{tQ_{\Vo}(\varepsilon)})_{t>0}$ is eventually strongly positive but not positive for all $\varepsilon$ small enough. If we fix such $\varepsilon>$, applying Lemma~\ref{lem:eventually-positive-semigroups}(ii) again it follows that for large enough $\ell$ the semigroup $(e^{-tD_{\lambda_\ell,\Vo}})_{t\geq 0}$ is eventually strongly positive, but not positive.
\end{proof}
We finally give a proof of Theorem~\ref{thm:main-commensurable}.
\begin{proof}[Proof of Theorem~\ref{thm:main-commensurable}]
  By assumption, there exists $L>0$ and a family $(n_e)_{e\in E}$ in $\bbN$ such that $L_e=n_eL$ for all $e\in E$. Hence, if $\lambda$ is as in \eqref{eq:main-commensurable}, then
  \begin{align*}
    \sin\left(\sqrt{\lambda}L_e\right)
     & =\sin\left(\sqrt{\lambda}n_eL\right)
    =\sin\left(\left(\sqrt{\mu}+\frac{2\pi p}{L}\right)n_eL\right) \\
     & =\sin\left(\sqrt{\mu}L_e+2\pi pn_e\right)
    =\sin\left(\sqrt{\mu}L_e\right)
  \end{align*}
  and similarly for $\cos\left(\sqrt{\lambda}L_e\right)$. Taking into account \eqref{eq:alpha}, \eqref{eq:beta} and \eqref{eq:DN-Vo} it follows that
  \begin{equation*}
    \frac{1}{\sqrt{\lambda}}D_{\lambda,\Vo}=\frac{1}{\sqrt{\mu}}D_{\mu,\Vo}
  \end{equation*}
  whenever $\lambda=\left(\sqrt{\mu}+\frac{2\pi p}{L}\right)^2$ with $\mu\in\left(0,\lambda_1(-\Delta_{\Vo})\right)$ and $p\in\bbN$. As $-D_{\lambda,\Vo}$ generates a strongly positive semigroup by Proposition~\ref{prop:positivity-less-spb}, the same is true for $-D_{\lambda,\Vo}$ if $\lambda$ is given as in \eqref{eq:main-commensurable}.
\end{proof}
We note that for suitable configurations, also eventual strong positivity is possible for arbitrarily large positive $\lambda$, but that is more delicate to prove. The reason is that some off-diagonal entries need to be negative, but small enough relative to the other positive off-diagonal entries. Hence, to establish such a result one one typically takes the generator of a strongly positive semigroup with a zero off-diagonal entry that can be perturbed so that it becomes negative.

\appendix

\section{Appendix: Positive and eventually positive matrix semigroups}
\label{sec:matrix-semigroups}
The purpose of this appendix is to collect some facts on positive and eventually positive matrix semigroups. We recall that a matrix $M$ is \emph{reducible} if there exists a permutation matrix $P$ such that we have a block decomposition of the form
\begin{equation*}
  PMP^T=
  \begin{bmatrix}
    M_{11} & 0 \\M_{21}&M_{22}
  \end{bmatrix}.
\end{equation*}
If no such permutation matrix exists we call $M$ \emph{irreducible}. Recall that a directed graph is strongly connected if there is a path between any pair of vertices. Our graphs are not directed, but the convention is that every edge represents two edges, one in each direction, see for instance \cite[Sections~1.1 and~1.3]{berkolaiko:13:iqg}. This makes the adjacency matrix of an undirected graph symmetric. Here is a characterisation of irreducible matrices in terms of graphs, see for instance \cite[Theorem~2.2.7]{berman:94:nmm}.
\begin{lemma}[Irreducible matrices]
  \label{lem:irreducible-matrix}
  A matrix is irreducible if and only if its non-zero off-diagonal entries correspond to those of the adjacency matrix of a strongly connected graph.
\end{lemma}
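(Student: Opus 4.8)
\textbf{Proof proposal for Lemma~\ref{lem:irreducible-matrix}.}
The plan is to show both implications directly from the definition of reducibility via the existence of a permutation matrix $P$ with $PMP^T$ in the stated block-triangular form, translating this into a statement about reachability in the directed graph $G_M$ whose edge set consists of the ordered pairs $(k,j)$ with $M_{kj}\neq 0$, $k\neq j$. First I would fix notation: let $M\in\bbC^{n\times n}$ and let $G_M$ be this directed graph on vertex set $\{1,\dots,n\}$. Recall that $G_M$ is strongly connected precisely when for every ordered pair of distinct vertices $k,j$ there is a directed path from $k$ to $j$. The diagonal of $M$ plays no role, which is why the statement is about off-diagonal entries only.

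For the contrapositive of the ``strongly connected $\Rightarrow$ irreducible'' direction, suppose $M$ is reducible, so there is a permutation $\pi$ (with matrix $P$) and a nontrivial splitting $\{1,\dots,n\}=S_1\sqcup S_2$ such that after relabelling by $\pi$ the block $M_{12}$ vanishes; concretely, $M_{kj}=0$ whenever $k\in\pi^{-1}(S_1)$ and $j\in\pi^{-1}(S_2)$. Reading this back on $G_M$: there is no edge from the vertex set $T_1:=\pi^{-1}(S_1)$ into $T_2:=\pi^{-1}(S_2)$, both nonempty. Then no directed path can ever leave $T_1$ for $T_2$, since any such path would have to use a first edge crossing from $T_1$ to $T_2$. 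Hence $G_M$ is not strongly connected. Conversely, if $G_M$ is not strongly connected, pick vertices $a,b$ with no directed path from $a$ to $b$, let $T_1$ be the set of vertices reachable from $a$ (including $a$) and $T_2$ its complement; then $b\in T_2$ so $T_2\neq\emptyset$, and by construction there is no edge from $T_1$ into $T_2$ (an edge $(k,j)$ with $k\in T_1$, $j\in T_2$ would make $j$ reachable from $a$). Taking any permutation $P$ that lists the vertices of $T_1$ first and those of $T_2$ second puts $PMP^T$ into the required block-triangular form, so $M$ is reducible.

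Combining the two directions gives the equivalence: $M$ is irreducible if and only if $G_M$ is strongly connected, i.e.\ the non-zero off-diagonal entries of $M$ are exactly the directed edges of a strongly connected graph. I do not anticipate a genuine obstacle here; the only point requiring a little care is the bookkeeping between the permutation $\pi$ and the induced relabelling of the vertices of $G_M$, and the observation that $G_M$ as used in the paper is the symmetric ``every edge counts in both directions'' graph, so strong connectedness of $G_M$ coincides with connectedness of the underlying undirected graph. Since the statement is standard (see \cite[Theorem~2.2.7]{berman:94:nmm}), one may alternatively simply cite it; the argument above is included only for self-containedness.
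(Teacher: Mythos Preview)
Your proposal is correct. The paper does not actually prove this lemma at all; it simply states it and cites \cite[Theorem~2.2.7]{berman:94:nmm}, so your self-contained argument via reachability sets goes beyond what the paper provides, while your closing remark that one may alternatively just cite the reference is exactly what the paper does.
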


In line with the theory of positive operators we call a matrix \emph{positive} if all entries are \emph{non-negative} and write $M\geq 0$. We call $M$ \emph{strongly positive} if all entries are strictly greater than zero, and write $M\gg 0$. The semigroup $(e^{tM})_{t\geq 0}$ is called \emph{eventually strongly positive} if there exists $t_0\geq 0$ such that $e^{tM}\gg 0$ for all $t>t_0$. It is called \emph{strongly positive} if it is eventually strongly positive with $t_0=0$. We will frequently use the following well known facts.
\begin{lemma}[Strongly positive semigroups]
  \label{lem:positive-semigroups}
  Let $M$ be a real $n\times n$ matrix. Then the following assertions are equivalent.
  \begin{enumerate}[label={\normalfont(\roman*)}]
  \item $(e^{tM})_{t\geq 0}$ is strongly positive.
  \item $M$ is irreducible and there exists $\omega\in\mathbb R$ with $\omega I+M\geq 0$
  \item There exists $\omega\in\mathbb R$ with $\omega I+M\geq 0$ and the positive off-diagonal entries of $M$ determine the adjacency matrix of a strongly connected graph.
  \item There exists $\omega\in\mathbb R$ and $k\in\mathbb N$ such that $\omega I+M\geq 0$ and $(\mu I+M)^k\gg 0$.
  \item $(\mu I-M)^{-1}\gg 0$ for all $\mu>\spb(M)$.
  \end{enumerate}
\end{lemma}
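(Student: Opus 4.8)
The plan is to establish the cycle of implications $(i)\Rightarrow(ii)\Rightarrow(iii)\Rightarrow(iv)\Rightarrow(v)\Rightarrow(i)$, treating each arrow as a short, essentially classical argument and leaning on Perron--Frobenius theory together with Lemma~\ref{lem:irreducible-matrix}. Throughout I would use the identity $e^{tM}=e^{-\omega t}e^{t(\omega I+M)}$, which shows that positivity properties of $(e^{tM})_{t\ge 0}$ are unaffected by adding a multiple of $I$ to $M$, and the power-series expansion $e^{t(\omega I+M)}=\sum_{k\ge 0}t^k(\omega I+M)^k/k!$, which converts statements about nonnegativity of powers into statements about the semigroup.

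First I would prove $(i)\Rightarrow(ii)$: choose $\omega$ so large that $\omega I+M$ has nonnegative diagonal; since the off-diagonal entries of $\omega I+M$ equal those of $M$, it suffices to note that if some off-diagonal entry $M_{kj}$ were negative then, differentiating $t\mapsto (e^{tM})_{kj}$ at $t=0$, we would get $(e^{tM})_{kj}<0$ for small $t>0$, contradicting strong positivity; hence $\omega I+M\ge 0$. Irreducibility of $M$ then follows because, if $M$ were reducible, conjugating by the permutation matrix would make $e^{tM}$ block lower-triangular with a zero block for all $t$, again contradicting $e^{tM}\gg 0$. The implication $(ii)\Rightarrow(iii)$ is immediate from Lemma~\ref{lem:irreducible-matrix} once one observes that the nonzero off-diagonal entries of $M$ coincide with the positive off-diagonal entries of $\omega I+M$, which are the positive off-diagonal entries of $M$; here one also uses that $M$ is real so the only relevant entries are real. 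For $(iii)\Rightarrow(iv)$ I would argue that strong connectedness of the graph determined by the positive off-diagonal entries of $N:=\omega I+M$ means that for any pair of indices $(k,j)$ there is a path of some length realized by a product of positive entries of $N$; choosing $\omega$ slightly larger if necessary so that $N$ additionally has strictly positive diagonal, a single sufficiently large power $k$ (e.g. $k=n-1$, or $k=n$ to be safe) then has all entries strictly positive, since $(N^k)_{kj}\ge N_{kk}^{k-\ell}\cdots>0$ along such a path. The implication $(iv)\Rightarrow(v)$ follows from the resolvent formula $(\mu I-M)^{-1}=\int_0^\infty e^{-\mu t}e^{tM}\,dt$ valid for $\mu>\spb(M)$: writing $e^{tM}=e^{-\omega t}\sum t^k N^k/k!$ with $N=\omega I+M$, the integrand is a nonnegative combination of the matrices $N^k$, all of which are $\ge 0$ for $k$ large and have at least one $k$ with $N^k\gg 0$, so the integral is strictly positive entrywise. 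Finally $(v)\Rightarrow(i)$ again uses $e^{tM}=\lim_{n\to\infty}\bigl(\tfrac{n}{t}(\tfrac{n}{t}I-M)^{-1}\bigr)^n$ (the Euler/Post--Widder formula, or the exponential formula for semigroups), where for $n/t>\spb(M)$ each factor is $\gg 0$ by $(v)$, hence each power is $\gg 0$, and the entrywise limit is $\ge 0$; to upgrade to $\gg 0$ for $t>0$ one invokes that $(v)$ forces $M$ to be irreducible (if $M$ were reducible the resolvent would be block-triangular with a zero block) and $\omega I+M\ge 0$ for large $\omega$, so one is back in case $(ii)$, or one simply notes that a reducible or triangular-with-zero-block structure is incompatible with $(v)$ and then argues directly that $e^{tM}$ inherits strict positivity.

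An alternative, perhaps cleaner, route avoids the circle by proving $(i)\Leftrightarrow(ii)$ first as the core statement (this is the classical characterization of generators of strongly positive matrix semigroups, combining the Kato/Phillips criterion that $\omega I+M\ge 0$ characterizes positivity of $(e^{tM})_{t\ge0}$ with the Perron--Frobenius fact that a positive irreducible semigroup is strongly positive for all $t>0$), and then showing the remaining conditions $(iii)$, $(iv)$, $(v)$ are each equivalent to $(ii)$ by the elementary graph-theoretic and resolvent arguments sketched above. I would cite \cite[Theorem~2.2.7]{berman:94:nmm} for Lemma~\ref{lem:irreducible-matrix} and a standard reference such as \cite{berman:94:nmm} or \cite[Chapter~VI]{engel:00:oss}-type material for the positivity/irreducibility dichotomy; since the paper only states the lemma as ``well known facts,'' a compact proof assembling these ingredients is all that is needed.

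The main obstacle, such as it is, lies in the passage from ``positive'' to ``strongly positive'' for $t>0$ in the irreducible case --- i.e. the Perron--Frobenius input --- and in being careful that the same $\omega$ (or a possibly larger one) can be chosen to make $\omega I+M$ simultaneously entrywise nonnegative \emph{and}, after raising to a power, strictly positive; the diagonal entries of $\omega I+M$ must be made strictly positive (not merely nonnegative) so that paths in the associated graph actually propagate positivity through powers. Once that bookkeeping is handled, every implication reduces to a one- or two-line argument, so I do not anticipate genuine difficulty, only the need for care with the quantifiers on $\omega$, $k$, and $\mu$.
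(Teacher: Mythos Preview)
Your proposal is correct and assembles the same classical ingredients as the paper. The paper's proof is more compact in two respects. First, it simply cites \cite[Theorems~2.1.3 and~2.2.7]{berman:94:nmm} for the equivalence of (ii)--(iv), whereas you sketch the graph-combinatorial argument for (iii)$\Rightarrow$(iv) by hand; your version is more self-contained but longer. Second, and more substantively, the paper connects (iv) to both (i) and (v) via two power-series identities: the exponential series $e^{tM}=e^{-\omega t}\sum_{k\ge 0}\frac{t^k}{k!}(\omega I+M)^k$ and the Neumann series $(\mu I-M)^{-1}=\sum_{k\ge 0}(\mu+\omega)^{-(k+1)}(\omega I+M)^k$. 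The Neumann series handles (iv)$\Leftrightarrow$(v) symmetrically in one line, avoiding your Laplace-integral/Euler-formula detour and the upgrade from $\ge 0$ to $\gg 0$ that you correctly flag as the delicate point in (v)$\Rightarrow$(i). Your circularity worry there is real but harmless once reorganized: with the Neumann series one gets (v)$\Rightarrow$(ii) directly, and (ii)$\Rightarrow$(iv)$\Rightarrow$(i) via the exponential series closes the loop without invoking Perron--Frobenius separately.
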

\begin{proof}
  Let $\omega\in\mathbb R$ such that $\omega I + M\geq 0$. The equivalence of (ii)--(iv) now follows from \cite[Theorem~2.1.3 and~2.2.7]{berman:94:nmm}. It follows from
  \begin{equation*}
    e^{tM}=e^{-\omega t}e^{t(\omega I+M)}=e^{-\omega t}\sum_{k=0}^\infty\frac{t^k}{k!}(\omega I+M)^k,
  \end{equation*}
  that (vi) implies (i) and (i) implies (ii). It follows from a Neumann series expansion that
  \begin{equation*}
    (\mu I-M)^{-1}=\sum_{k=1}^\infty\frac{1}{(\mu+\omega)^{k+1}}(\omega I+ M)^k
  \end{equation*}
  whenever $\mu>\spb(M)$ and thus (vi) implies (v) and (v) implies (ii).
\end{proof}

The next lemma collects some facts about eventually strongly positive matrix semigroups, see \cite[Theorem~3.3]{noutsos:08:rhn} and \cite[Proposition~4.6]{daners:18:tpt}.

\begin{lemma}[Eventually strongly positive semigroups]
  \label{lem:eventually-positive-semigroups}
  Let $M$ be a real $n\times n$ matrix. Then the following assertions hold.
  \begin{enumerate}[label={\normalfont(\roman*)}]
  \item $(e^{tM})_{t\geq 0}$ is eventually strongly positive if and only if there exists $\mu\in\mathbb R$ and $k\in\mathbb N$ such that $(\mu I+M)^k\gg 0$.
  \item The set of real matrices that generate an eventually strongly positive semigroup is open in $\mathbb R^{n\times n}$.
  \end{enumerate}
\end{lemma}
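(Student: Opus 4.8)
The plan is to obtain (i) from the Perron--Frobenius theory of positive and eventually positive matrices, and then to deduce (ii) from (i) by a continuity argument. For (i), I would first normalise: fix $\mu\in\mathbb R$, put $N:=\mu I+M$, and note that $e^{tM}=e^{-\mu t}e^{tN}$, so $(e^{tM})_{t\ge 0}$ is eventually strongly positive precisely when $(e^{tN})_{t\ge 0}$ is, while $(\mu I+M)^k=N^k$. Thus (i) is really a statement about a single matrix $N$, with $\mu$ at our disposal. The quantity linking the three notions is the spectral projection: the relevant condition on $N$ is the \emph{strong Perron--Frobenius property}, namely that $\spb(N)$ is an algebraically simple eigenvalue of $N$, strictly exceeds the real part of every other eigenvalue, and that the corresponding spectral projection $P$ satisfies $P\gg 0$; equivalently, $\spb(N)$ is strictly dominant in modulus with strongly positive right and left Perron eigenvectors $v,w$, normalised so that $P=vw^{T}$.

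Assuming this property for $N$, the positivity statements follow from the spectral decomposition: since $e^{-\spb(N)t}e^{tN}\to P$ as $t\to\infty$ and, once $\mu$ is large enough that $\spb(N)>0$, also $\spb(N)^{-k}N^k\to P$ as $k\to\infty$, and since $P\gg 0$ while the strongly positive matrices form an open subset of $\mathbb R^{n\times n}$, we get $e^{tN}\gg 0$ for all large $t$ and $N^k\gg 0$ for all large $k$. For the converse one must \emph{recover} the strong Perron--Frobenius property from a positivity statement: starting from $N^{k_0}\gg 0$, classical Perron--Frobenius applied to the strictly positive matrix $N^{k_0}$ produces a simple, strictly dominant-in-modulus eigenvalue with strongly positive eigenvector $v$; because $N^{k_0}(Nv)=N(N^{k_0}v)$ again lies in the one-dimensional Perron eigenspace, $v$ is an eigenvector of $N$ too, and identifying its eigenvalue with $\spb(N)$ (and running the same argument for $N^{T}$ to control the left eigenvector) yields the full property. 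The passage starting from eventual strong positivity of $(e^{tN})_{t\ge 0}$ is analogous, working with $e^{2tN}=(e^{tN})^2$ to see that $e^{-\spb(N)t}e^{tN}$ tends to a strictly positive rank-one projection. These are exactly the matrix-theoretic facts recorded in \cite[Theorem~3.3]{noutsos:08:rhn} and \cite[Proposition~4.6]{daners:18:tpt}, which I would cite for the purely linear-algebraic parts, presenting the argument at roughly this level of detail.

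For (ii), I would argue directly from (i). Let $M_0$ generate an eventually strongly positive semigroup and fix $\mu$ and $k$ with $(\mu I+M_0)^k\gg 0$. The map $M\mapsto(\mu I+M)^k$, $\mathbb R^{n\times n}\to\mathbb R^{n\times n}$, is continuous, being polynomial in the matrix entries, and the set of strongly positive matrices is open; hence $(\mu I+M)^k\gg 0$ persists on a neighbourhood $U$ of $M_0$, and by (i) every $M\in U$ generates an eventually strongly positive semigroup.

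I expect the genuine difficulty to lie in the converse direction of (i): converting a single positivity statement ($N^{k_0}\gg 0$, or $e^{tN}\gg 0$ for one large $t$) into the complete strong Perron--Frobenius structure of $N$, in particular checking that the dominant eigenvalue is real, positive, strictly dominant in modulus and simple, and that both Perron eigenvectors are strongly positive. This is precisely the point where the cited results do the work; everything else — the forward direction of (i) and all of (ii) — is routine once the spectral decomposition is in hand.
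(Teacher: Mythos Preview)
Your proposal is correct and in fact considerably more detailed than the paper's own treatment: the paper does not prove this lemma at all but simply cites \cite[Theorem~3.3]{noutsos:08:rhn} and \cite[Proposition~4.6]{daners:18:tpt}, the same references you invoke. Your sketch of how those results work --- the reduction to the strong Perron--Frobenius property via spectral projections, and the openness argument via continuity of $M\mapsto(\mu I+M)^k$ --- is accurate and matches the content of those cited sources.
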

The next lemma characterises positive matrix groups.
\begin{lemma}
  \label{lem:positive-matrix-group}
  Let $M \in \bbC^{d \times d}$.
  \begin{enumerate}[label={\normalfont (\roman*)}]
  \item If the semigroup $(e^{tM})_{t \ge 0}$ is positive and if the semigroup $(e^{-tM})_{t \ge 0}$ is eventually positive, then $e^{tM}$ is positive for all $t \in \bbR$.
  \item If $e^{tM}$ is positive for all $t \in \bbR$, then $M$ is a diagonal matrix with only real entries.
  \end{enumerate}
\end{lemma}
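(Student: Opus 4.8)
For part~(i), the plan is to exploit the semigroup law together with the eventual positivity of the reverse semigroup. Suppose $(e^{tM})_{t\ge 0}$ is positive and fix $t_0>0$ such that $e^{-tM}\ge 0$ for all $t\ge t_0$. Given an arbitrary $t\in\bbR$, I would write $t = s - nt_0$ for a suitable $n\in\bbN$ and $s\ge 0$ (choose $n$ large enough that $nt_0 + t \ge 0$), so that
\begin{equation*}
  e^{tM} = e^{sM}\,\bigl(e^{-t_0M}\bigr)^{n}.
\end{equation*}
Each factor $e^{-t_0M}$ is positive because $t_0\ge t_0$, and $e^{sM}$ is positive since $s\ge 0$; a product of entrywise-nonnegative matrices is entrywise nonnegative, so $e^{tM}\ge 0$. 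Since $t\in\bbR$ was arbitrary, $e^{tM}\ge 0$ for all $t\in\bbR$. This step is essentially bookkeeping and I expect no obstacle.

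For part~(ii), assume $e^{tM}\ge 0$ for all $t\in\bbR$, so $(e^{tM})_{t\in\bbR}$ is a positive matrix \emph{group}. The strategy is to differentiate at $t=0$ from both sides. For $t>0$, positivity of $e^{tM}$ gives $(e^{tM}-I)/t$ has nonnegative off-diagonal entries; letting $t\downarrow 0$ shows the off-diagonal entries of $M$ are $\ge 0$. Applying the same argument to $-M$ (whose group $e^{t(-M)} = e^{-tM}$ is also positive) shows the off-diagonal entries of $-M$ are $\ge 0$, i.e.\ the off-diagonal entries of $M$ are $\le 0$. Hence all off-diagonal entries of $M$ vanish, so $M$ is diagonal. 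It remains to see the diagonal entries are real: once $M=\operatorname{diag}(m_1,\dots,m_d)$, positivity of $e^{tM}=\operatorname{diag}(e^{tm_1},\dots,e^{tm_d})$ for all real $t$ forces each $e^{tm_k}\ge 0$ for all $t\in\bbR$, which is impossible unless $m_k\in\bbR$ (if $m_k = a+ib$ with $b\ne 0$, then $e^{tm_k}=e^{ta}(\cos tb + i\sin tb)$ is non-real, hence not $\ge 0$, for most $t$).

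The one point requiring a little care — and the closest thing to an obstacle — is justifying the differentiation step cleanly: one needs that $t\mapsto e^{tM}$ is differentiable at $0$ with derivative $M$ (standard), and that taking the limit of the $(k,j)$-entry of $(e^{tM}-I)/t$ as $t\downarrow 0$ is legitimate entrywise (also standard, since matrix entries are continuous linear functionals). A slightly slicker alternative that avoids even this is: for $k\ne j$, the entry $[e^{tM}]_{kj}$ is an entire function of $t$ vanishing at $t=0$; if it is $\ge 0$ for all $t>0$ and also $[e^{-tM}]_{kj}=[e^{tM}]_{kj}\big|_{t\mapsto -t}\ge 0$ for all $t>0$ (wait — this is $[e^{tM}]_{kj}$ evaluated at negative argument), then $[e^{tM}]_{kj}\ge 0$ on all of $\bbR$; an entire function that is nonnegative on $\bbR$ and vanishes at $0$ has a double zero there, and combined with the explicit series $[e^{tM}]_{kj} = t M_{kj} + O(t^2)$ this forces $M_{kj}=0$. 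Either route is short; I would present the differentiation argument as it is the most transparent. The reality of the diagonal is then immediate as above.
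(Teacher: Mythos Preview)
Your proposal is correct and follows essentially the same route as the paper. For~(i) both you and the paper factor $e^{tM}$ for negative $t$ into a product of matrices already known to be positive via the semigroup law; for~(ii) both arguments differentiate at $t=0$ to see the off-diagonal entries of $M$ and of $-M$ are nonnegative, hence zero. The only cosmetic difference is in the reality argument: the paper simply notes $M=\left.\tfrac{d}{dt}e^{tM}\right|_{t=0}$ is the derivative of a real matrix-valued function and hence real, whereas you compute the diagonal exponential explicitly; both are fine.
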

\begin{proof}
  (i) There exists a number $t_0 > 0$ such that $e^{tM}$ is positive for all $t \in \bbR \setminus (-t_0,0)$. If $t \in (-t_0,0)$, then $t-t_0,t_0\not\in(-t_0,0)$, so we have $e^{tM} = e^{(t-t_0)M}e^{t_0M} \ge 0$.

  (ii) Since $e^{tM} \geq 0$ for all $t \geq 0$, every off-diagonal entry of $M$ must be non-negative. Since also $e^{t(-M)} \geq 0$ for all $t \geq 0$, every off-diagonal entry of $-M$ is also non-negative, and so they are all $0$. Hence $M$ is a diagonal matrix. That $M$ is real follows from the fact that $M = \left.\left(\frac{d}{dt}e^{tM}\right)\right\rvert_{t=0}$ and $e^{tM}$ is real (since non-negative).
\end{proof}
It is worthwhile pointing out that assertion~(ii) of the above lemma has some infinite dimensional analogues; see \cite[Proposition~3.5]{keicher:06:psb}, \cite[Corollary~2.4]{wolff:08:tpp} or \cite[Theorem~2.2]{gerlach:19:cpo}.

\paragraph*{Acknowledgements}
JBK would like to express his thanks to the University of Sydney for its hospitality during a visit in which part of this project was completed, and DD likewise to the University of Wuppertal for a very pleasant visit. The work of JBK was partly supported by the Funda\c{c}\~ao para a Ci\^encia e a Tecnologia, Portugal, via the research centers GFM (grants \href{https://doi.org/10.54499/UIDB/00208/2020}{UIDB/00208/2020} and  \href{https://doi.org/10.54499/UIDP/00208/2020}{UIDP/00208/2020}) and CIDMA (grants \href{https://doi.org/10.54499/UIDB/04106/2020}{UIDB/04106/2020} and \href{https://doi.org/10.54499/UIDP/04106/2020}{UIDP/04106/2020}).

\pdfbookmark[1]{\refname}{biblio}%
\bibliographystyle{doi}%
\bibliography{literature}

\end{document}